\definecolor{mydarkblue}{rgb}{0,0.08,0.45}
\theoremstyle{plain}
\newtheorem{Th}{Theorem}[section]
\newtheorem{Prop}[Th]{Proposition}
\theoremstyle{definition}
\newtheorem{Def}[Th]{Definition}
\newtheorem{Rem}[Th]{Remark}
\newtheorem{?}[Th]{Problem}
\newtheorem{Ex}[Th]{Example}
\crefname{Th}{Theorem}{Theorems}
\crefname{lemma}{Lemma}{Lemmas}
\crefname{fact}{Fact}{Facts}
\crefname{theorem}{Theorem}{Theorems}
\crefname{corollary}{Corollary}{Corollaries}
\crefname{Prop}{Proposition}{Propositions}
\crefname{claim}{Claim}{Claims}
\crefname{example}{Example}{Examples}
\crefname{problem}{Problem}{Problems}
\crefname{definition}{Definition}{Definitions}
\crefname{assumption}{Assumption}{Assumptions}
\crefname{subsection}{Subsection}{Subsections}
\crefname{section}{Section}{Sections}
\crefname{algorithm}{Algorithm}{Algorithms}
\crefname{algocf}{alg.}{algs.}
\Crefname{algocf}{Algorithm}{Algorithms}
\crefname{proposition}{Proposition}{Propositions}
\crefname{exemple}{Exemple}{Examples}
\crefname{remark}{Remark}{Remarks}
\definecolor{brickred}{rgb}{0.8, 0.25, 0.33}
\newcommandx{\aymeric}[1]{\textcolor{brickred}{\textbf{AD: #1}}}
\newcommandx{\baptiste}[1]{\textcolor{green}{\textbf{BG: #1}}}
\newcommandx{\adrien}[1]{\textcolor{blue}{\textbf{AT: #1}}}
\newcommandx{\fabian}[1]{\textcolor{magenta}{\textbf{FP: #1}}}
\newcommandx{\damien}[1]{\textcolor{purple}{\textbf{DS: #1}}}
\DeclareMathOperator*{\argmax}{arg\,max}
\DeclareMathOperator*{\argmin}{arg\,min}
\DeclareMathOperator*{\spanop}{{span}}
\newcommand{\tausigmaone}{{\textcolor[HTML]{66C2A5}{$\bm{\tau^{\sigma_1^{\Lambda}}}$}}}
\newcommand{\tausigmatwo}{\textcolor[HTML]{FC8D62}{$\bm{\tau^{\sigma_2^{\Lambda}}}$}}
\newcommand{\tausigmathree}{\textcolor[HTML]{984EA3}{$\bm{\tau^{\sigma_3^{\Lambda}}}$}}
\newcommand{\muone}{\textcolor[HTML]{984ea3}{\boldsymbol{\mu}_1}}
\newcommand{\Lone}{\textcolor[HTML]{4DAF4A}{\boldsymbol{L}_1}}
\newcommand{\mutwo}{\textcolor[HTML]{4DAF4A}{\boldsymbol{\mu}_2}}
\newcommand{\Ltwo}{\textcolor[HTML]{984ea3}{\boldsymbol{L}_2}}
\begin{document}
\addtocontents{toc}{\protect\setcounter{tocdepth}{0}}

%

%
\runningauthor{Goujaud, Scieur, Dieuleveut, Taylor, Pedregosa}

\twocolumn[

\aistatstitle{Super-Acceleration with Cyclical Step-sizes}

\aistatsauthor{Baptiste Goujaud \\ CMAP, École Polytechnique \\
    Institut Polytechnique de Paris \And Damien Scieur \\ Samsung SAIL Montreal 
\And  Aymeric Dieuleveut \\ CMAP, École Polytechnique \\
    Institut Polytechnique de Paris \AND  Adrien Taylor \\INRIA, \'Ecole Normale Supérieure \\ CNRS, PSL Research University, Paris\\ \\
\And Fabian Pedregosa \\ Google Research}

]

\begin{abstract}
We develop a convergence-rate analysis of momentum with cyclical step-sizes. We show that under some assumption on the spectral gap of Hessians in machine learning, cyclical step-sizes are provably faster than constant step-sizes. More precisely, we develop a convergence rate analysis for quadratic objectives that provides optimal parameters and shows that cyclical learning rates can improve upon traditional lower complexity bounds. We further propose a systematic approach to design optimal first order methods for quadratic minimization with a given spectral structure. Finally, we provide a local convergence rate analysis beyond quadratic minimization for the proposed methods and illustrate our findings through benchmarks on least squares and logistic regression problems.
\end{abstract}



\section{Introduction}\label{sec:introduction-and-related-work}

One of the most iconic methods in first order optimization is gradient descent with momentum, also known as the heavy ball method~\citep{polyak1964some}. 
This method enjoys widespread popularity both in its original formulation and in a stochastic variant that replaces the gradient by a stochastic estimate, a method that is behind many of the recent breakthroughs in deep learning~\citep{sutskever2013importance}.

A variant of the stochastic heavy ball where the step-sizes are chosen in \emph{cyclical} order has recently come to the forefront of machine learning research, showing state-of-the art results on different deep learning benchmarks  \citep{loshchilov2016sgdr,smith2017cyclical}. Inspired by this empirical success, we aim to study the convergence of the heavy ball algorithm where step-sizes $h_0, h_1, \ldots$ are not fixed or decreasing but instead chosen in cyclical order, as in Algorithm \ref{algo:cyclical_heavy_ball}.

\begin{restatable}[t]{algorithm}{HBK}
\caption{\newline Cyclical heavy ball $\mathrm{HB}_K(h_0,\ldots, h_{K-1}; m)$\label{algo:cyclical_heavy_ball}}
\SetAlgoLined
\textbf{Input:} Initialization $x_0$, momentum $m \in (0, 1)$, step-sizes $\{h_0,\ldots, h_{K-1}\}$\\
$x_1 = x_0 - \mfrac{h_0}{1 + m}\nabla f(x_0)$\\
\For{$t = 1,\, 2,\,\ldots$}{
\begin{align*}
    x_{t+1} = & ~~ x_t - h_{\text{mod}(t,K)} \nabla f(x_t) + m(x_{t}-x_{t-1})
\end{align*}
 \vspace{-4ex}
}
\end{restatable}

The heavy ball method with constant step-sizes enjoys a mature theory, where it is known for example to achieve optimal black-box worst-case complexity of quadratic convex optimization~\citep{nemirovsky1992information}. In stark contrast, little is known about the the convergence of the above variant with cyclical step-sizes. Our main motivating question is

\begin{center}
\fbox{\parbox{0.45\textwidth}{
\begin{center}
Do cyclical step-sizes improve \\ convergence of heavy ball?
\end{center}
}}
\end{center}

Our {\bfseries main contribution} provides a positive answer to this question and, more importantly,  \emph{quantifies} the speedup under different assumptions. In particular, we show that for quadratic problems, whenever Hessian's spectrum belongs to two or more disjoint intervals, the heavy ball method with cyclical step-sizes achieves a faster worst-case convergence rate. Recent works have shown that this assumption on the spectrum is quite natural and occurs in many machine learning problems, including deep neural networks \citep{sagun2017empirical, papyan2018full, ghorbani2019investigation, papyan2019measurements}. The concurrent work of \citet{oymak2021super} analyzes gradient descent (without momentum, see extended comparison in Appendix \ref{apx:comparison_with_Oymak}) under this assumption.
More precisely, we list our main contributions below.
\begin{itemize}[leftmargin=*]
    \item In sections \ref{sec:alternating_hb} and \ref{sec:polynomials}, we provide a {\bfseries tight convergence rate analysis} of the cyclical heavy ball method (Theorems \ref{thm:rate_factor_alternating_hb} and~\ref{cor:rate_convergence_heavy_ball_alternating_step_size} for two step-sizes, and Theorem \ref{thm:general_rate_convergence} for the general case). This analysis highlights a regime under which this method achieves a faster worst-case rate than the accelerated rate of heavy ball, a phenomenon we refer to as \emph{super-acceleration}. Theorem \ref{thm:local_convergence_non_quadratic} extends the (local) convergence rate analysis results to non-quadratic objectives.

    \item As a byproduct of the convergence-rate analysis, we obtain an  explicit expression for the {\bfseries optimal parameters} in in the case of cycles of length two (Algorithm \ref{algo:alternating_heavy_ball}) and an implicit expression in terms of a system of $K$ equations in the general case.
    
    \item Section \ref{sec:experiments} presents {\bfseries numerical benchmarks} illustrating the improved convergence of the cyclical approach on 4 problems involving quadratic and logistic losses on both synthetic and a handwritten digits recognition dataset.
    \item Finally, we conclude in  Section \ref{sec:conclusions} with a discussion of this work's {\bfseries limitations}.
\end{itemize}

\vspace{-0.4em}
\section{Notation and Problem Setting}\label{sec:notations}
\vspace{-0.8em}
Throughout the paper (except in Section~\ref{sec:local-convergence}), we consider the problem of minimizing a quadratic function:
\begin{equation}
    \min_{x \in \mathbb{R}^d} f(x)\,, ~ \text{with } \;\; f\in \mathcal{C}_\Lambda, \label{eq:quad_problem}\tag{OPT}
\end{equation}

where $\mathcal{C}_\Lambda$ is the class of quadratic functions with Hessian matrix $H$ and whose Hessian spectrum $\mathrm{Sp}(H)$ is localized in $\Lambda \subseteq [\mu,L] \subseteq \mathbb{R}_{> 0}$:

\begin{equation*}
    \mathcal{C}_\Lambda \triangleq \left\{f(x) = (x-x_*)^{\top}\tfrac{H}{2}(x-x_*) + f_*,\, \mathrm{Sp}(H)\subseteq \Lambda \right\} \,
\end{equation*}

The condition $\Lambda \subseteq [\mu,L]$ implies all quadratic functions under consideration are $L$-smooth and $\mu$-strongly convex. For this function class, we define $\kappa$, the (inverse) condition number, and  $\rho$, the ratio between the center of $\Lambda$ and its radius, as
\begin{equation}\label{eq:kappa_rho}
 \kappa \triangleq \frac{\mu}{L}, \qquad \quad \rho \triangleq \frac{L+\mu}{L-\mu} \quad\textcolor{gray}{=\left(\frac{1+\kappa}{1-\kappa}\right)}\,.
\end{equation}

Finally, for a method solving~\eqref{eq:quad_problem} that generates a sequence of iterates $\{x_t\}$, we define its worst-case rate $r_t$ and its asymptotic rate factor $\tau$ as
\begin{equation} \label{eq:def_rates}
    r_t \triangleq \underset{x_0\in\mathbb{R}^d,\; f\in\mathcal{C}_{\Lambda}}{\sup}  \frac{\|x_t-x_*\|}{\|x_0-x_*\|}  ,\; 1-\tau \triangleq \limsup\limits_{t\rightarrow\infty} \sqrt[t]{r_t}\,.
\end{equation}

\section{Super-acceleration with Cyclical Step-sizes} \label{sec:alternating_hb}
\vspace{-0.8em}

In this section we develop one of our main contributions, a convergence rate analysis of the cyclical heavy ball method with cycles of length 2.
This analysis crucially depends on the location of the Hessian's eigenvalues; we assume that these are contained in a set $\Lambda$ that is the union of 2 intervals \textit{of the same size}
\begin{equation} \label{eq:def_lambda}
    \vphantom{\sum_i^n}\Lambda = [\muone,\,\Lone]\cup[\mutwo,\,\Ltwo]\,,~ \Lone-\muone = \Ltwo-\mutwo\,.
\end{equation}
By symmetry, this set is alternatively described by
\begin{equation}
\vphantom{\sum_i^n}
    \mu \triangleq \muone, \quad L \triangleq \Ltwo \quad \text{and} \quad R \triangleq \frac{\textcolor[HTML]{4DAF4A}{\boldsymbol{\mu}_2 - \boldsymbol{L}_1}}{\textcolor[HTML]{984ea3}{\boldsymbol{L}_2 - \boldsymbol{\mu}_1}}\,, \label{eq:def_params_hb_alternating}
\end{equation} 
where $R$ is the relative length of the gap $\textcolor[HTML]{4DAF4A}{\boldsymbol{\mu}_2 - \boldsymbol{L}_1}$ with respect to the diameter $\textcolor[HTML]{984ea3}{\boldsymbol{L}_2 - \boldsymbol{\mu}_1}$ (see Figure \ref{fig:mnist_density}). This parametrization is convenient since the relative gap plays a crucial role in our convergence analysis. Our results allow $R=0$, therefore recovering the classical setting of Hessian eigenvalues contained in an interval. 

\begin{figure}[ht]
    \centering
    \includegraphics[width=0.7\linewidth]{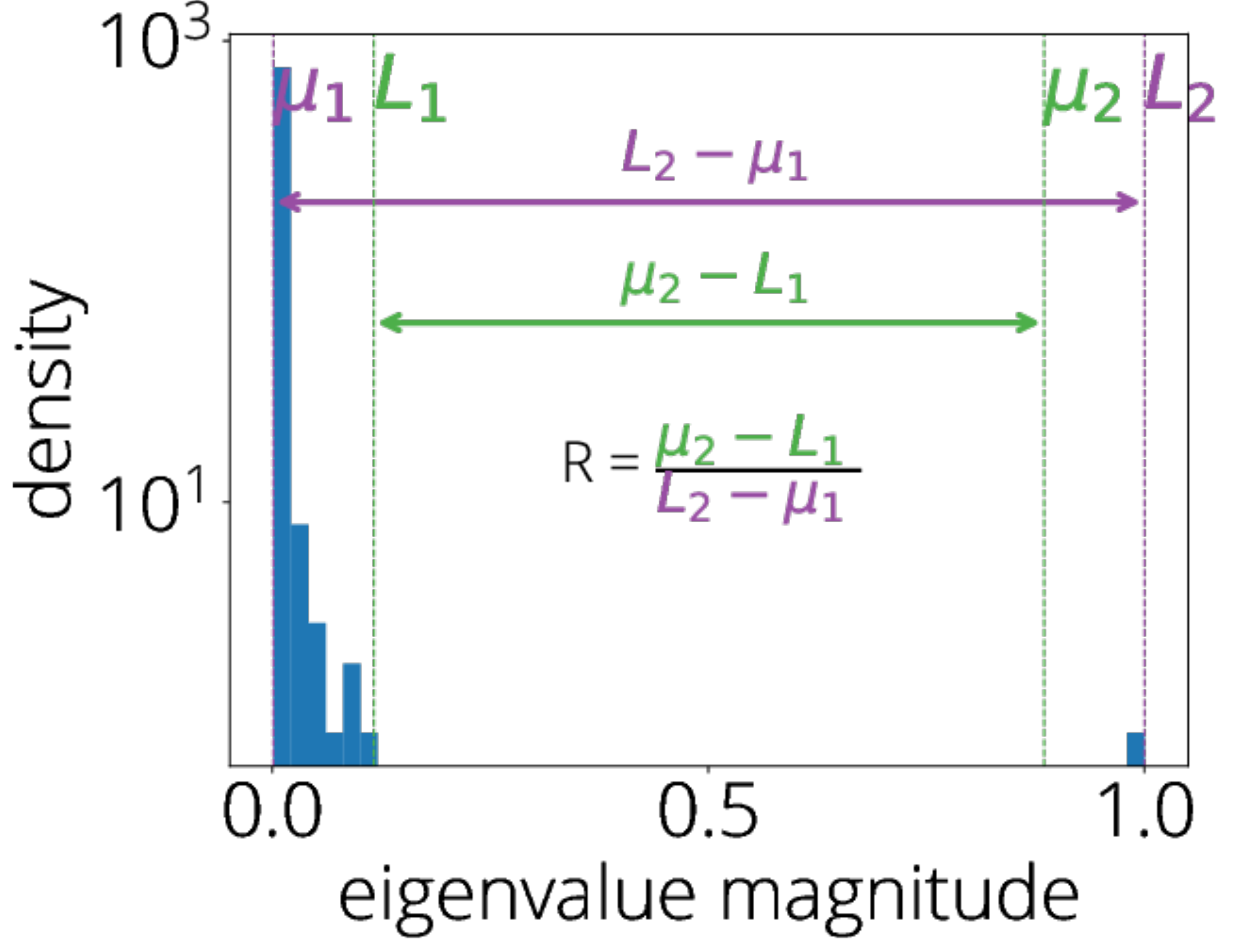}
    \caption{Hessian eigenvalue histogram  for a  quadratic objective on MNIST. The outlier eigenvalue at ${\textcolor[HTML]{984ea3}{\boldsymbol{L}_2}}$ generates a non-zero relative gap $R=0.77$. In this case, 
    the 2-cycle heavy ball method has a faster asymptotic rate than the single-cycle one (see Section \ref{subsec:comparison_polyak}).}
    \label{fig:mnist_density}
\end{figure}


Through a correspondence between optimization methods and polynomials (see Section \ref{sec:polynomials}), we can derive a worst-case analysis for the cyclical heavy ball method. The outcome of this analysis is in the following theorem, that provides the asymptotic convergence rate of Algorithm \ref{algo:cyclical_heavy_ball} for cycles of length two. All proofs of results in this section can be found in Appendix \ref{apx:case-of-a-2-step-sizes-cycle}.

\begin{figure*}
    \centering
    \includegraphics[width=0.9\linewidth]{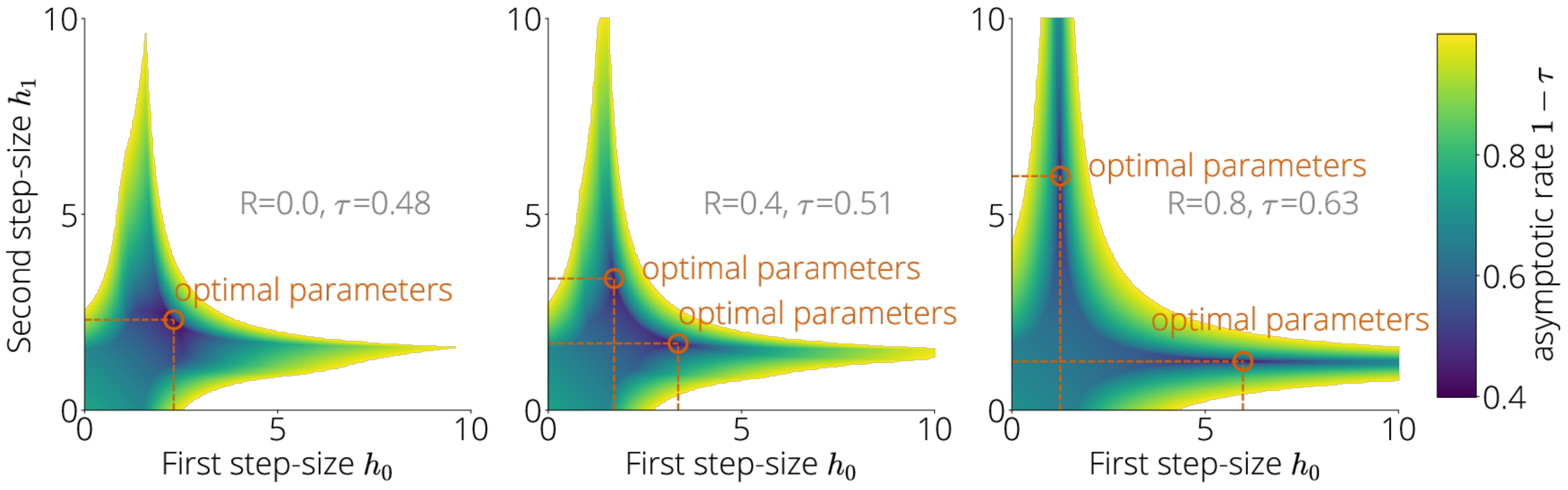}
    \vspace{-1ex}
    \caption{{\bfseries Asymptotic rate of cyclical ($K=2$) heavy ball} in terms of its step-sizes $h_0, h_1$ across 3 different values of the relative gap $R$. In the {\bfseries left} plot, the relative gap is zero, and so the step-sizes with smallest rate coincide ($h_0 = h_1$). For non-zero values of $R$ ({\bfseries center and right}), the optimal method instead alternates between two \emph{different} step-sizes.
    In all plots the momentum parameter $m$ is set according to Algorithm \ref{algo:alternating_heavy_ball}. 
    }
    \label{fig:rate_factor_cyclical}
\end{figure*}

\begin{restatable}[Rate factor of $\mathrm{HB}_2(h_0, h_1; m)$]{Th}{ratefactoralternatinghb}
 \label{thm:rate_factor_alternating_hb}
    Let $f\in\mathcal{C}_\Lambda$ and consider the cyclical heavy ball method with step-sizes $h_0,\,h_1$ and momentum parameter $m$. The asymptotic rate factor of Algorithm~\ref{algo:cyclical_heavy_ball} with cycles of length two is
    \begin{equation*}
        1-\tau =
        \left\{
            \begin{array}{ll}
                 \sqrt{m} & \mathrm{if~} \sigma_{*} \leq 1, \\
                 \sqrt{m} \left( \sigma_{*} + \sqrt{\sigma_{*}^2 - 1} \right)^{\frac{1}{2}} & \mathrm{if~} \sigma_{*} \in \Big(1,\; \frac{1+m^2}{2m}\Big), \\
                 \geq 1 ~ \text{(no convergence)} & \mathrm{if~} \sigma_{*} \geq \frac{1+m^2}{2m},
            \end{array}
        \right.\,
    \end{equation*}
    \begin{equation*}
        \text{ with}\quad \sigma_{*} = \underset{\lambda\in\left\{ \mu_1, L_1, \mu_2, L_2, (1+m)\frac{h_0 + h_1}{2 h_0 h_1} \right\}\cap \Lambda}{\max}\left| \sigma_2(\lambda) \right|\,
    \end{equation*}
    \begin{equation*}
        \text{ and}\quad \sigma_2(\lambda) = 2\left(\mfrac{1+m - \lambda h_0}{2\sqrt{m}}\right)\left(\mfrac{1+m - \lambda h_1}{2\sqrt{m}}\right) - 1 \,.
    \end{equation*}
\end{restatable}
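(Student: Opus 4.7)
The plan is to exploit the linearity of the iteration for quadratic problems. Since $f\in\mathcal{C}_\Lambda$, we have $\nabla f(x)=H(x-x_*)$, and the error $e_t\triangleq x_t-x_*$ satisfies a linear recurrence. Diagonalizing $H$ reduces the analysis to a family of scalar three-term recurrences, one for each eigenvalue $\lambda\in\Sp(H)\subseteq\Lambda$:
$$
e_{t+1}^{(\lambda)}=\bigl(1+m-\lambda h_{\mathrm{mod}(t,2)}\bigr)\,e_t^{(\lambda)}-m\,e_{t-1}^{(\lambda)}.
$$
Because the step-sizes are 2-cyclic, the natural object is the two-step transition over a full cycle. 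Writing the state as $(e_{t+1}^{(\lambda)},e_t^{(\lambda)})^{\top}$, one obtains the $2\times 2$ cycle matrix
$$
A(\lambda)=M_1(\lambda)\,M_0(\lambda),\qquad M_i(\lambda)=\begin{pmatrix}1+m-\lambda h_i & -m\\ 1 & 0\end{pmatrix},
$$
and the per-step asymptotic rate factor is $1-\tau=\sqrt{\sup_{\lambda\in\Lambda}\;\mathrm{spr}(A(\lambda))}$, where taking the square root accounts for the two iterations per cycle, and the $\sup$ over $\lambda\in\Lambda$ rather than over $\Sp(H)$ is justified by the freedom to choose the worst-case $f\in\mathcal{C}_\Lambda$.

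Next, I would compute $\det A(\lambda)=\det M_0\det M_1=m^2$ and
$$
\mathrm{tr}(A(\lambda))=(1+m-\lambda h_0)(1+m-\lambda h_1)-2m=2m\,\sigma_2(\lambda),
$$
which is exactly the quantity appearing in the theorem (up to the normalization by $2\sqrt{m}$ in each factor). The eigenvalues of $A(\lambda)$ are therefore the roots of $X^2-2m\,\sigma_2(\lambda)X+m^2=0$, namely
$$
\zeta_{\pm}(\lambda)=m\bigl(\sigma_2(\lambda)\pm\sqrt{\sigma_2(\lambda)^2-1}\bigr).
$$
A standard case split on $|\sigma_2(\lambda)|$ then gives the three regimes: if $|\sigma_2(\lambda)|\le 1$, the roots are complex conjugates of modulus $m$, yielding per-step rate $\sqrt{m}$; if $|\sigma_2(\lambda)|>1$, the roots are real, and $\mathrm{spr}(A(\lambda))=m(|\sigma_2(\lambda)|+\sqrt{\sigma_2(\lambda)^2-1})$, whose square root matches the stated middle case; non-convergence occurs precisely when this spectral radius reaches $1$, i.e., when $|\sigma_2(\lambda)|=(1+m^2)/(2m)$, which is where $m(u+\sqrt{u^2-1})=1$ solves to $u=(1+m^2)/(2m)$.

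It remains to identify $\sup_{\lambda\in\Lambda}|\sigma_2(\lambda)|$. Since $\sigma_2$ is a quadratic polynomial in $\lambda$, its extrema on the (compact) union of intervals $\Lambda=[\mu_1,L_1]\cup[\mu_2,L_2]$ are attained either at one of the endpoints $\mu_i,L_i$ or at the unique critical point of $\sigma_2'$. Differentiating yields $\sigma_2'(\lambda)=0\iff \lambda=(1+m)(h_0+h_1)/(2h_0h_1)$, which must be kept only when it lies in $\Lambda$, hence the intersection with $\Lambda$ in the definition of $\sigma_*$. Combining this with the case analysis of the previous step produces exactly the three-line expression for $1-\tau$.

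The main obstacle is twofold. First, one must carefully connect the \emph{spectral radius} of the cycle matrix $A(\lambda)$ to the worst-case rate $r_t$ in \eqref{eq:def_rates}; since $A(\lambda)$ may be defective for specific $\lambda$, polynomial growth factors could appear, and one must argue (e.g.\ via upper/lower matching sequences of $f$) that they do not affect the $t$-th root limit defining $\tau$. Second, while the reduction $\sup_{f\in\mathcal{C}_\Lambda}=\sup_{\lambda\in\Lambda}$ is intuitive, making it rigorous requires exhibiting, for each $\lambda\in\Lambda$, a specific quadratic and initialization achieving the pointwise rate $\mathrm{spr}(A(\lambda))^{t/2}$; this is the place where the polynomial framework alluded to in Section~\ref{sec:polynomials} will do most of the bookkeeping.
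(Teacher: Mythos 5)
Your argument is correct, and at its core it is the same transfer-matrix computation the paper carries out for general cycle length $K$ in the proof of Theorem~\ref{thm:general_rate_convergence}: you both reduce to a $2\times 2$ cycle matrix, compute its trace and determinant, and read off the asymptotic rate from its spectral radius. The bookkeeping differs in normalization. You keep the raw companion matrices $M_i=\begin{pmatrix}1+m-\lambda h_i & -m\\ 1 & 0\end{pmatrix}$ with $\det M_i=m$, so $\det A(\lambda)=m^2$ and $\mathrm{tr}\,A(\lambda)=2m\,\sigma_2(\lambda)$, and the rate $\sqrt{m}$ then appears as the modulus of complex-conjugate eigenvalues of $A$. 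The paper instead pre-divides by $(\sqrt{m})^t$ (working with $\tilde P_t=P_t/(\sqrt m)^t$), which makes the product of matrices have determinant $1$ and trace $2\sigma_2(\lambda)$, and then recognizes the three-term recurrence of $\tilde P_{nK}$ as a Chebyshev-type recursion. What the paper's Chebyshev representation buys is a clean \emph{non-asymptotic} bound (used for Corollary~\ref{cor:rate_convergence_heavy_ball_alternating_step_size}): the second-kind Chebyshev polynomial $U_n$ controls the polynomial prefactor explicitly. Your spectral-radius formulation yields only the limit $\limsup_t r_t^{1/t}$, which is all that is asked here, at the cost of the two loose ends you honestly flag, namely that defective $A(\lambda)$ adds only a polynomial prefactor that the $t$-th root kills, and that tightness requires exhibiting for each $\lambda$ a quadratic achieving $\mathrm{spr}(A(\lambda))^{t/2}$; the paper handles both through Proposition~\ref{prop:link_algo_poly} and the explicit $T_n/U_n$ formulas. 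Your reduction of $\sup_{\lambda\in\Lambda}|\sigma_2|$ to the finite candidate set (endpoints plus vertex of the parabola intersected with $\Lambda$) matches the paper's convexity argument. In short: correct, same mechanism, different parameterization of the transfer matrix, and you correctly defer to the polynomial framework for the two rigor points.
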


\subsection{Optimal algorithm}

The previous theorem gives the convergence rate for all triplets ($h_0, h_1, m$). This allows us for instance to map out the associated convergence rate for every pair of step-sizes. As we illustrate in Figure \ref{fig:rate_factor_cyclical}, as we increase the relative gap ($R$), the optimal step-sizes become further apart.

Another application of the previous theorem is to find the parameters that minimize the asymptotic convergence rate. Although the process rather tedious and relegated to Appendix  \ref{apx:case-of-a-2-step-sizes-cycle}, the resulting momentum $(m)$ and step-size parameters $(h_0, h_1)$ are remarkably simple, and given by the expressions
\begin{align}
     &m = \left(\frac{\sqrt{\rho^2 - R^2} - \sqrt{\rho^2 - 1}}{\sqrt{1-R^2}}\right)^2 \\
        &h_0 =  \frac{1+m}{\Lone}\,\quad ~
        h_1 = \frac{1+m}{\mutwo}\,.
\end{align}

Being one of our main contributions, this algorithm is also described in pseudocode in Algorithm \ref{algo:alternating_heavy_ball}.
By construction, this method has an \emph{asymptotically optimal} convergence rate which we detail in the next Corollary:

\begin{algorithm}
    \caption{Cyclical ($K=2$) heavy ball with optimal parameters \label{algo:alternating_heavy_ball}}
    \SetAlgoLined
    \textbf{Input:} Initial iterate $x_0$, $\muone < \Lone \leq \mutwo < \Ltwo$ $\quad$ (where $\Lone-\muone = \Ltwo-\mutwo$)\\
    \textbf{Set:} $\rho = \frac{\Ltwo+\muone}{\Ltwo-\muone}$, $R=\frac{\textcolor[HTML]{4DAF4A}{\boldsymbol{\mu}_2 - \boldsymbol{L}_1}}{\textcolor[HTML]{984ea3}{\boldsymbol{L}_2 - \boldsymbol{\mu}_1}}$,\\
    $m =  \left(\frac{\sqrt{\rho^2 - R^2} - \sqrt{\rho^2 - 1}}{\sqrt{1-R^2}}\right)^2$ \\
    $x_1 = x_0 - \frac{1}{\Lone}\nabla f(x_0)$ \\
    ~\\
    \For{$t = 1,\, 2,\,\ldots$}{
        \vspace{1ex}
        \begin{gather*}
            h_t = \textstyle \frac{1+m}{\Lone} ~ \text{(if $t$ is even)}, \quad h_t = \frac{1+m}{\mutwo} ~ \text{(if $t$ is odd)} \\
            x_{t+1} = x_t - h_t \nabla f(x_t) + m(x_{t}-x_{t-1})
        \end{gather*}
        \vspace{-4ex}
    }
\end{algorithm}

\begin{restatable}{Cor}{rateconvergenceheavyballalternatingstepsize} \label{cor:rate_convergence_heavy_ball_alternating_step_size}
    The non-asymptotic and asymptotic worst-case rates $r_t^{\text{Alg. 2}}$ and $1-
    \tau^{\text{Alg. 2}}$ of Algorithm~\ref{algo:alternating_heavy_ball} over $\mathcal{C}_\Lambda$ for even iteration number $t$ are
    \begin{align*}
        &r_{t}^{\text{Alg. 2}} = \textstyle\Big(\frac{\sqrt{\rho^2 - R^2} - \sqrt{\rho^2 - 1}}{\sqrt{1 - R^2}}\Big)^{t} \Big( 1 + t\sqrt{\frac{\rho^2-1}{\rho^2 - R^2}}\Big),
        \\
        &1 - \tau^{\text{Alg. 2}} = \mfrac{\sqrt{\rho^2 - R^2} - \sqrt{\rho^2 - 1}}{\sqrt{1 - R^2}}\,.
    \end{align*}
\end{restatable}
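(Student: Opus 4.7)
The plan is to derive the corollary directly from \cref{thm:rate_factor_alternating_hb} by plugging in the specific values $h_0 = (1+m)/L_1$, $h_1 = (1+m)/\mu_2$, and computing $\sigma_*$ explicitly for this choice. First I would observe that $h_0,h_1$ are chosen precisely so that $1{+}m{-}L_1 h_0 = 0$ and $1{+}m{-}\mu_2 h_1 = 0$, which immediately gives $\sigma_2(L_1) = \sigma_2(\mu_2) = -1$. The auxiliary candidate $(1{+}m)(h_0{+}h_1)/(2h_0 h_1)$ simplifies to $(L_1{+}\mu_2)/2$, which lies in the open gap $(L_1,\mu_2)$ and is therefore excluded from the max in \cref{thm:rate_factor_alternating_hb} since it does not belong to $\Lambda$. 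So the only remaining candidates are $\mu_1$ and $L_2$.

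Next I would compute $\sigma_2(\mu_1)$ and $\sigma_2(L_2)$ using the shifted coordinates $c=(L{+}\mu)/2$, $r=(L{-}\mu)/2$, so that $\mu_1=c-r$, $L_2=c+r$, $L_1=c-Rr$, $\mu_2=c+Rr$, and $\rho=c/r$. A direct symmetric computation gives
\begin{equation*}
\sigma_2(\mu_1) \;=\; \sigma_2(L_2) \;=\; \frac{(1+m)^2(1-R^2)}{2m(\rho^2-R^2)}-1.
\end{equation*}
Setting this quantity equal to $+1$ yields a quadratic relation in $\sqrt{m}+1/\sqrt{m}$ whose smaller root is precisely the formula for $\sqrt{m}$ used in \cref{algo:alternating_heavy_ball}. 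With this choice, $\sigma_*=1$, we fall in the first branch of \cref{thm:rate_factor_alternating_hb}, and the asymptotic rate reads $1-\tau = \sqrt{m} = (\sqrt{\rho^2-R^2}-\sqrt{\rho^2-1})/\sqrt{1-R^2}$, matching the claim.

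For the non-asymptotic rate I would pass to the two-step transition matrix $N(\lambda)=M_1(\lambda)M_0(\lambda)$ on each one-dimensional eigenspace of $H$, where $\det N(\lambda)=m^2$ and $\mathrm{tr}\,N(\lambda)=2m\sigma_2(\lambda)$. Cayley--Hamilton gives the Chebyshev-type recursion
\begin{equation*}
N(\lambda)^n \;=\; m^n\bigl[U_{n-1}\!\bigl(\sigma_2(\lambda)\bigr)\,\tfrac{1}{m}N(\lambda)\;-\;U_{n-2}\!\bigl(\sigma_2(\lambda)\bigr)\,I\bigr],
\end{equation*}
where $U_k$ is the Chebyshev polynomial of the second kind. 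On the open set where $|\sigma_2(\lambda)|<1$, $\|N(\lambda)^n\|$ is controlled by $m^n$ times a bounded factor; the worst case is attained at the four boundary eigenvalues $\mu_1,L_1,\mu_2,L_2$, where $\sigma_2=\pm1$ and the identity $U_{n-1}(\pm 1)=(\pm 1)^{n-1}n$ produces a factor linear in $n=t/2$. Carrying the constants through (using the initialization $x_1 = x_0 - \tfrac{h_0}{1+m}\nabla f(x_0)$ and the explicit generalized eigenvector of $N(\lambda)$ at the critical points) yields the claimed prefactor $1+t\sqrt{(\rho^2-1)/(\rho^2-R^2)}$.

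The main obstacle is this last step: showing that the exact non-asymptotic constant on the boundary equals $\sqrt{(\rho^2-1)/(\rho^2-R^2)}$ rather than a loose overestimate. I would handle it by evaluating $U_{n-1}$ and $U_{n-2}$ at $\pm 1$, tracking the projection of the initial two-vector $(x_0-x_*,x_1-x_*)$ onto the Jordan chain of $N(\lambda)$, and simplifying via the defining identity $(1+m)^2(1-R^2)=4m(\rho^2-R^2)$, which is exactly what forces the $\sqrt{(\rho^2-1)/(\rho^2-R^2)}$ factor to appear. A cleaner route, if available, is to use the polynomial machinery of \cref{sec:polynomials}: apply the change of variable $\sigma=\sigma_2(\lambda)$ which maps $\Lambda$ bijectively onto $[-1,1]$ and identifies the residual polynomial of \cref{algo:alternating_heavy_ball} at step $t=2n$ with an explicit renormalized Chebyshev polynomial of degree $n$, whose sup-norm on $\Lambda$ is then a standard calculation.
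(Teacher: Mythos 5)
Your outline is correct and would reach the right formulas, but the route you take for the non-asymptotic prefactor is substantially more laborious than the paper's, and you leave exactly the hard part as a sketch.

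For the asymptotic rate your argument is clean and fully different from the paper's: you plug the optimal $(h_0,h_1,m)$ into \cref{thm:rate_factor_alternating_hb}, observe $\sigma_2(L_1)=\sigma_2(\mu_2)=-1$ and $(1+m)(h_0+h_1)/(2h_0h_1)=(L_1+\mu_2)/2\notin\Lambda$, and verify $\sigma_2(\mu_1)=\sigma_2(L_2)=1$ from the identity $(1+m)^2(1-R^2)=4m(\rho^2-R^2)$, so $\sigma_*=1$ and $1-\tau=\sqrt m$. This is a valid and direct derivation, complementary to the paper's.

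For the non-asymptotic rate, however, the paper does \emph{not} go through the two-step transition matrix and Jordan-chain bookkeeping. It uses \cref{prop:nice_formulation_for_2_steps_HB_with_optimal_param}, which gives the exact identity
\begin{equation*}
\Tilde P_{2n}(\lambda)=\tfrac{2m}{1+m}\,T_{2n}\!\left(\sqrt{\zeta(\lambda)}\right)+\tfrac{1-m}{1+m}\,U_{2n}\!\left(\sqrt{\zeta(\lambda)}\right),\qquad \zeta(\lambda)=\tfrac{(1+m-\lambda h_0)(1+m-\lambda h_1)}{4m},
\end{equation*}
and then simply applies the standard bounds $|T_{2n}|\le1$ and $|U_{2n}|\le 2n+1$ on $[-1,1]$ (valid because $\sigma_2(\Lambda)\subseteq[-1,1]$ forces $\sqrt{\zeta(\lambda)}\in[0,1]$). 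This gives $r_{t}\le(\sqrt m)^t\big(1+t\,\tfrac{1-m}{1+m}\big)$ for $t=2n$ in one line, and the algebraic identity $\tfrac{1-m}{1+m}=\sqrt{\tfrac{\rho^2-1}{\rho^2-R^2}}$ (which follows from $\sqrt m=(a-b)/c$, $1/\sqrt m=(a+b)/c$ with $a=\sqrt{\rho^2-R^2}$, $b=\sqrt{\rho^2-1}$, $c=\sqrt{1-R^2}$, using $a^2-b^2=c^2$) finishes the computation. Your transfer-matrix/Cayley--Hamilton route is equivalent in principle, but you would still have to project the initial two-vector onto the generalized eigenvector of the Jordan block and simplify, which is precisely what you flag as "the main obstacle" and do not carry out. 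Your closing remark — that the cleaner route is the polynomial machinery with a renormalized Chebyshev interpretation — is exactly the paper's argument; you should have committed to it, because as written your non-asymptotic step asserts the exact prefactor without deriving it.

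One shared caveat: both your sketch and the paper's proof really establish an \emph{upper} bound on $r_t$; the claimed equality also requires exhibiting a tight instance (an eigenvalue at one of $\mu_1,L_2$ and an adversarial $x_0$), which neither text spells out.
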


Note that this result also holds if we swap the 2 step-sizes in Algorithm \ref{algo:alternating_heavy_ball}.

\paragraph{Eigengap and accelerated cyclical step-sizes}

While Corollary \ref{cor:rate_convergence_heavy_ball_alternating_step_size} focuses on the optimal tuning of \Cref{algo:alternating_heavy_ball}, \Cref{thm:cyclical_chebyshev_2_steps} provides general convergence analysis for non-optimal parameters. In the case of existence of an eigengap, a range of cyclical step-sizes leads to an accelerated rate of convergence (compared to the optimal constant step-size strategy) and therefore, an inexact parameters search can lead to such an acceleration.

\subsection{Comparison with Polyak Heavy Ball}\label{subsec:comparison_polyak}

    In the absence of eigenvalue gap ($R=0$ and $\Lambda = [\mu, L]$), Algorithm~\ref{algo:alternating_heavy_ball} reduces to Polyak heavy ball~(PHB) \citep{polyak1964some}, whose worst-case rate is detailed in Appendix \ref{apx:optimal-methods-for-strongly-convex-smooth-quadratic-objective}. Since the asymptotic rate of Algorithm ~\ref{algo:alternating_heavy_ball} is monotonically decreasing in $R$, the convergence rate of the cyclical variant is always better than PHB. Furthermore, in the ill-conditioned regime (small $\kappa$), the comparison is particularly simple: the optimal 2-cycle algorithm has a  $\sqrt{1-R^2}$ relative improvement over PHB, as provided by the next proposition. A~more thorough comparison for different support sets $\Lambda$ is discussed in \Cref{tab:speedups}.
    
    \begin{restatable}{Prop}{asymptoticratesmallkappa}
        \label{prop:asymptotic_rate_small_kappa}
        Let $ R \in [0,1)$. The rate factors of respectively Algorithm~\ref{algo:alternating_heavy_ball} and PHB verify
        \begin{gather}
            1-\tau^{\text{Alg. 2}} \underset{\kappa\rightarrow 0}{=} \textstyle 1 - \frac{2\sqrt{\kappa}}{\sqrt{1-R^2}} + {o}(\sqrt{\kappa})\,,\\
            1-\tau^{\text{PHB}} \underset{\kappa\rightarrow 0}{=} 1 - 2\sqrt{\kappa} + {o}(\sqrt{\kappa})\,. \nonumber
        \end{gather}
    \end{restatable}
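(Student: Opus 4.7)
The plan is to reduce the proposition to a Taylor expansion of the explicit asymptotic rate formula given in Corollary \ref{cor:rate_convergence_heavy_ball_alternating_step_size}. Recall that
\begin{equation*}
1 - \tau^{\text{Alg. 2}} = \frac{\sqrt{\rho^2 - R^2} - \sqrt{\rho^2 - 1}}{\sqrt{1 - R^2}},
\end{equation*}
and that the classical asymptotic rate of Polyak heavy ball over $\mathcal{C}_{[\mu,L]}$ (recalled in \Cref{apx:optimal-methods-for-strongly-convex-smooth-quadratic-objective}) is $(1-\sqrt{\kappa})/(1+\sqrt{\kappa})$, which coincides with the $R=0$ specialization $\rho - \sqrt{\rho^2-1}$ of the formula above. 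Both expressions are analytic in $\kappa$ on a neighborhood of $0$ (after the substitution $\sqrt{\kappa}\mapsto s$), so the result reduces to reading off the leading terms.

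First, I would rewrite $\rho = (1+\kappa)/(1-\kappa)$ so that $\rho^2 - 1 = 4\kappa/(1-\kappa)^2 = 4\kappa + o(\kappa)$, giving $\sqrt{\rho^2 - 1} = 2\sqrt{\kappa} + o(\sqrt{\kappa})$ as $\kappa\to 0$. Next, I would expand
\begin{equation*}
\sqrt{\rho^2 - R^2} = \sqrt{(1-R^2) + (\rho^2 - 1)} = \sqrt{1-R^2}\,\sqrt{1 + \tfrac{\rho^2-1}{1-R^2}},
\end{equation*}
and use $\sqrt{1+u} = 1 + O(u)$ to obtain $\sqrt{\rho^2 - R^2} = \sqrt{1-R^2} + O(\kappa)$. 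Substituting these two expansions into the closed form yields
\begin{equation*}
1 - \tau^{\text{Alg. 2}} = \frac{\sqrt{1-R^2} + O(\kappa) - 2\sqrt{\kappa} + o(\sqrt{\kappa})}{\sqrt{1-R^2}} = 1 - \frac{2\sqrt{\kappa}}{\sqrt{1-R^2}} + o(\sqrt{\kappa}),
\end{equation*}
which is the first claimed expansion. The PHB estimate follows by either directly Taylor-expanding $(1-\sqrt{\kappa})/(1+\sqrt{\kappa}) = 1 - 2\sqrt{\kappa} + o(\sqrt{\kappa})$, or by specializing the above computation to $R=0$.

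There is no serious obstacle: the main point is that the formula of Corollary \ref{cor:rate_convergence_heavy_ball_alternating_step_size} is explicit and smooth in $\kappa$ for fixed $R\in[0,1)$. The only care needed is to keep $R$ bounded away from $1$ (which is assumed in the statement) so that $\sqrt{1-R^2}$ does not blow up in the denominator, ensuring that the $o(\sqrt{\kappa})$ remainder is indeed negligible uniformly for the desired expansion.
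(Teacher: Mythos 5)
Your proof is correct and follows essentially the same route as the paper: both start from the closed form $1-\tau^{\text{Alg.~2}}=(\sqrt{\rho^2-R^2}-\sqrt{\rho^2-1})/\sqrt{1-R^2}$, expand $\rho^2-1=4\kappa+O(\kappa^2)$ and $\sqrt{\rho^2-R^2}=\sqrt{1-R^2}+O(\kappa)$, and divide through. The PHB case is the $R=0$ specialization, as you note, and your check that $\rho-\sqrt{\rho^2-1}=(1-\sqrt\kappa)/(1+\sqrt\kappa)$ matches the paper's Appendix B computation.
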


\begin{table*}[ht]
{
\begin{center}
{\renewcommand{\arraystretch}{1.8}
\begin{tabular}{@{}llll@{}}
\specialrule{2pt}{1pt}{1pt}
Relative gap $R\;\;$ & Set $\Lambda$ & Rate factor $\tau$ & Speedup $\tau/\tau^{\text{PHB}}$ \\
\specialrule{2pt}{1pt}{1pt}
$R\in[0, 1)$ & $[\mu, \mu + \frac{1-R}{2}(L-\mu)]\cup[L - \frac{1-R}{2}(L-\mu), L]$ &  $\frac{2\sqrt{\kappa}}{\sqrt{1-R^2}}$& $(1-R^2)^{-\frac{1}{2}}$ \\ \hline
$R=1 - \sqrt{\kappa}/2$  & $[\mu, \mu + \frac{\sqrt{\mu L}}{4}] \cup [L - \frac{\sqrt{\mu L}}{4}, L]$  & $2\sqrt[4]{\kappa}$ & $\kappa^{-\frac{1}{4}}$  \\ \hline
$R=1 - 2\gamma\kappa$ & $[\mu, (1+\gamma)\mu] \cup [L - \gamma \mu, L]$ &  indep. of $\kappa$ &  $O(\kappa^{-\frac{1}{2}})$\\ 
\specialrule{2pt}{1pt}{1pt}\vspace{0em}
\end{tabular}
\caption{Case study of the convergence of Algorithm \ref{algo:alternating_heavy_ball} as a function of $R$, in the regime $\kappa \rightarrow 0$. The \textbf{first line} corresponds to the regime where $R$ is independent of $\kappa$, and we observe a constant gain w.r.t. PHB. The {\bfseries second line} considers a setting in which $R$ depends on $\sqrt{\kappa}$, that is, the two intervals in $\Lambda$ are relatively small. The asymptotic rate reads $(1-2\sqrt[4]{\kappa})^t$, improving over the $(1-2\sqrt{\kappa})^t$ rate of Polyak Heavy ball, unimprovable when $R=0$.
Finally, in the \textbf{third line}, $R$ depends on $\kappa$, the two intervals in $\Lambda$ are so small that the convergence becomes $O(1)$, i.e., is independent of $\kappa$.\vspace{-2ex}}\label{tab:speedups}}
\end{center}}
\end{table*}

\section{A constructive Approach: Minimax Polynomials}
\label{sec:polynomials}

This section presents a generic framework that allows designing optimal momentum and step-size cycles for given sets $\Lambda$ and cycle length $K$.

We first recall classical results that link optimal first order methods on quadratics and Chebyshev polynomials. Then, we generalize the approach by showing that optimal methods can be viewed as combinations of Chebyshev polynomials, and minimax polynomials $\sigma_K^{\Lambda}$ of degree $K$ over the set $\Lambda$. Finally, we show how to recover the step-size schedule from $\sigma_K^{\Lambda}$ and present the general algorithm (Algorithm \ref{algo:cyclical_chebyshev}).

\subsection{First Order Methods on Quadratics and Polynomials}\label{sec:lower-bounds-and-optimal-methods-for-smooth-and-strongly-convex-quadratic-objective-functions}

A key property that we will use extensively in the analysis is the following link between first order methods and polynomials.

\begin{restatable}{Prop}{linkalgopoly}\label{prop:link_algo_poly}
    Let $f\in\mathcal{C}_{\Lambda}$. The iterates $x_t$ satisfy
    \begin{equation}
        x_{t+1} \in x_0 + \spanop\{ \nabla f(x_0),\ldots, \nabla f(x_t) \} \,, \label{def:first_order_algo}
    \end{equation}
    where $x_0$ is the initial approximation of $x_*$, if and only if there exists a sequence of polynomials $(P_t)_{t\in\mathbb{N}}$, each of degree at most 1 more than the highest degree of all previous polynomials and $P_0$ of degree 0 (hence the degree of $P_t$ is at most $t$), such that
    \begin{equation}
        \vphantom{\sum_i}\forall \;t \quad x_t - x_* = P_t(H)(x_0-x_*), \quad P_t(0)=1\,. \label{eq:link_polynomial}
    \end{equation}
\end{restatable}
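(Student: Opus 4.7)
The plan is to proceed by induction on $t$, exploiting the fact that for a quadratic $f\in\mathcal{C}_\Lambda$ one has $\nabla f(x) = H(x-x_*)$, so every iterate and every gradient is of the form $(\text{polynomial in } H)\cdot(x_0-x_*)$. Both implications will follow by unpacking this identity.

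For the forward direction, I would assume $x_{t+1} = x_0 + \sum_{s=0}^{t}\alpha_s\nabla f(x_s)$ for some scalars $\alpha_s$, and induct on $t$ with base case $P_0\equiv 1$. At the inductive step, substituting $\nabla f(x_s) = H P_s(H)(x_0-x_*)$ into the recursion yields
\[
    x_{t+1}-x_* = \Bigl(I + \sum_{s=0}^{t}\alpha_s H P_s(H)\Bigr)(x_0-x_*),
\]
so one sets $P_{t+1}(X) := 1 + \sum_{s=0}^{t}\alpha_s X P_s(X)$. Clearly $P_{t+1}(0)=1$, and since $\deg(X P_s(X)) = 1+\deg P_s$, the degree of $P_{t+1}$ is at most one more than $\max_{s\le t}\deg P_s$, which is exactly the stated degree condition.

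For the reverse direction, I assume the polynomial representation together with $P_s(0)=1$ and the degree condition, and aim to produce coefficients $\beta_s$ with $x_{t+1}-x_0 = \sum_{s=0}^{t}\beta_s\nabla f(x_s)$. Since $P_{t+1}(0)=1$, the polynomial $P_{t+1}(X)-1$ is divisible by $X$; write $P_{t+1}(X)-1 = X\,\tilde Q(X)$. The goal is to express $\tilde Q$ as a linear combination of $P_0,\ldots,P_t$, after which evaluating at $H$ and applying to $x_0-x_*$ converts each $X P_s(X)(x_0-x_*)$ into $\nabla f(x_s)$ and delivers the desired span membership.

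The main obstacle, and the only non-routine piece, is the auxiliary claim that $\spanop\{P_0,\ldots,P_t\}$ contains every polynomial of degree at most $d_t := \max_{s\le t}\deg P_s$. A priori the degree condition allows several consecutive $P_s$'s to share the same degree, so a direct Vandermonde-style argument does not apply. I would establish this by a short induction on $t$: at each step either $\deg P_{t+1}\le d_t$, in which case $P_{t+1}$ already lies in the previous span and $d_{t+1}=d_t$, or $\deg P_{t+1}=d_t+1$, in which case $P_{t+1}$ is linearly independent of the previous polynomials and, combined with the previous span (which by induction covers all lower degrees), lets one construct $X^{d_t+1}$ by elimination. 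Since $\deg\tilde Q\le \deg P_{t+1}-1\le d_t$, this span-completeness statement yields the required expansion and closes the proof.
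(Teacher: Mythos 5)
Your proposal is correct and matches the paper's proof in both structure and substance: the forward direction by induction substituting $\nabla f(x_s)=HP_s(H)(x_0-x_*)$ into the span recursion, and the reverse direction by factoring $P_{t+1}(X)-1 = X\tilde Q(X)$ and expressing $\tilde Q$ in $\spanop\{P_0,\ldots,P_t\}$. The only cosmetic difference is how span-completeness is justified: the paper notes there is no gap in the achieved degrees, so one can pick a representative of each degree $0,\ldots,d_t$ and invoke triangularity, whereas you run an explicit induction on $t$ tracking whether the running max degree increments; both are equivalent and your version simply makes the bookkeeping more explicit.
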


\begin{Ex}[Gradient descent]
    Consider the gradient descent algorithm with fixed step-size $h$, applied to problem \eqref{eq:quad_problem}. Then, after unrolling the update, we have
    \begin{align}
        x_{t+1} \textcolor{gray}{-x_*} & = x_{t} \textcolor{gray}{-x_*} - h\nabla f(x_t) \nonumber \\
        & = x_{t} \textcolor{gray}{-x_*} - h H (x_t-x_*) \nonumber \\
        & = (I-hH)^{t+1}(x_0-x_*)\,. \label{eq:example_gd}
    \end{align}
    In this case, the polynomial associated to gradient descent is $P_t(\lambda) = (1-h\lambda)^t$.
\end{Ex}

The above proposition can be used to obtain worst-case rates for first order methods by bounding their associated polynomials. Indeed, using the Cauchy-Schwartz inequality in \eqref{eq:link_polynomial} leads to
\begin{gather}
    \|x_t-x_*\| \leq \sup_{\lambda \in \Lambda} |P_t(\lambda)|\; \|x_0-x_*\|  \nonumber  \\ \Longrightarrow \quad r_t = \sup_{\lambda \in \Lambda} |P_t(\lambda)|,\quad
    \text{where } P_t(0)=1\,.\label{eq:rate_convergence_polynomial}
\end{gather}
Therefore, finding the algorithm with the fastest worst-case rate can be equivalently framed as the problem of finding the polynomial with smallest value on the eigenvalue support $\Lambda$, subject to the normalization condition $P_t(0)=1$. Such polynomials are referred to as \textbf{minimax}. Throughout the paper, we use this polynomial-based approach to find methods with optimal rates.

An important property of minimax polynomials is their \textit{equioscillation} on $\Lambda$ (see Theorem \ref{th:equioscillation} and its proof for a formal statement).
\begin{Def} \label{def:equiosc}
(Equioscillation) A polynomial $P_t$ of degree $t$ equioscillates on $\Lambda$ if it verifies $P_t(0)=1$ and there exist $\lambda_0<\lambda_1<\ldots<\lambda_t \in \overline{\Lambda}$ such that
\begin{equation} \label{def:equioscillation}
    \vphantom{\sum_i}P_t(\lambda_i) = (-1)^i\,\max_{\lambda\in\Lambda}|P_t(\Lambda)|\,.
\end{equation}
\end{Def}

\begin{Ex}[$\Lambda$ is an interval] \label{ex:lambda_interval}
The $t$-th order Chebyshev polynomials of the first kind  $T_t$ satisfy the \emph{equioscillation} property on $[-1,\,1]$. It follows that minimax polynomials on $\Lambda=[\mu,L]$ can be obtained by composing the Chebyshev polynomial $T_t$ with the linear transformation $\sigma_1^{\Lambda}$:
\begin{gather}
    \frac{T_t\left( \sigma_1^{\Lambda}(\lambda) \right)}{T_t\left(\sigma_1^{\Lambda}(0)\right)}= \underset{\substack{
         P \in \mathbb{R}_t[X],
         P(0) = 1
    }}{\argmin} \;\; \underset{\lambda \in \Lambda}{\sup} |P(\lambda)| \,, \label{eq:optimal_poly_smooth_strong_convex} \\
    \text{with } \,\, \sigma_1^{\Lambda}(\lambda) = \frac{L+\mu}{L-\mu} - \frac{2}{L-\mu}\lambda\,, \nonumber
\end{gather}
where $\sigma_1^{\Lambda}$ maps the interval $[\mu,\,L]$ to $[-1,1]$. The optimization method associated with this minimax polynomial is the Chebyshev semi-terative method \citep{flanders1950numerical, golub1961chebyshev}, described also in Appendix \ref{subsec:chebyshev-method}. This method  achieves the lower complexity bound for smooth strongly convex quadratic minimization \citep[Chapter 12]{nemirovski1995information} or \citep{nemirovsky1992information,Nest03a}.
\end{Ex}

The next proposition provides the main results in this subsection, which is key for obtaining Algorithm~\ref{algo:alternating_heavy_ball}. It characterizes the even degree minimax polynomial in the setting of Section~\ref{sec:alternating_hb}, that is, when $\Lambda$ is the union of 2 intervals of same size. In this case, the minimax solution is also based on Chebyshev polynomials, but composed with a degree-two polynomial $\sigma_2^{\Lambda}$.

\begin{restatable}{Prop}{lambdauniontwointerval} \label{prop:lambda_union_two_interval}
    Let $\Lambda = [\muone,\,\Lone]\cup[\mutwo,\,\Ltwo]$ be an union of two intervals of the same size ($\Lone-\muone = \Ltwo-\mutwo$) and let $m, h_0, h_1$ be as defined in Algorithm \ref{algo:alternating_heavy_ball}. Then the minimax polynomial (solution to~\eqref{def:equioscillation}) is, for all $t=2n, \, n\in\mathbb{N}_0^+$,
    \begin{gather*}
        \frac{T_n\left( \sigma_2^{\Lambda}(\lambda) \right)}{T_n\left(\sigma_2^{\Lambda}(0)\right)} = \underset{\substack{
             P \in \mathbb{R}_t[X],\\
             P(0) = 1
        }}{\argmin} \;\; \underset{\lambda \in \Lambda}{\sup} |P(\lambda)| \,, \\
        \text{with } \,\, \sigma_2^{\Lambda}(\lambda) = \frac{1}{2m}\left(1 + m - \lambda h_0\right)\left(1 + m - \lambda h_1\right) - 1 \,.\label{eq:optimal_poly_two_intervals}
    \end{gather*}
\end{restatable}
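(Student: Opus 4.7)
The plan is to identify the claimed polynomial as the one characterized by an appropriate equioscillation on $\Lambda$, and then upgrade this into the minimality statement via a sign-change plus interpolation argument that exploits the normalization $P(0)=1$.

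First, I would record the geometric properties of $\sigma_2^{\Lambda}$. Substituting the definitions $h_0 = (1+m)/L_1$ and $h_1 = (1+m)/\mu_2$, the two factors in $\sigma_2^{\Lambda}$ vanish respectively at $L_1$ and $\mu_2$, giving $\sigma_2^{\Lambda}(L_1) = \sigma_2^{\Lambda}(\mu_2) = -1$. A direct algebraic check using the explicit formula for $m$ from Algorithm~\ref{algo:alternating_heavy_ball} yields $\sigma_2^{\Lambda}(\mu_1) = \sigma_2^{\Lambda}(L_2) = +1$; this is where the equal-length assumption $L_1 - \mu_1 = L_2 - \mu_2$ is crucial. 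Moreover, $\sigma_2^{\Lambda}$ is a quadratic with positive leading coefficient and minimum attained at $(L_1+\mu_2)/2$, which lies strictly between the two intervals. Hence $\sigma_2^{\Lambda}$ is monotone decreasing on $[\mu_1, L_1]$ and monotone increasing on $[\mu_2, L_2]$, and maps each of these intervals bijectively onto $[-1,1]$.

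Now define $P^\star(\lambda) = T_n(\sigma_2^{\Lambda}(\lambda))/T_n(\sigma_2^{\Lambda}(0))$. Since $\sigma_2^{\Lambda}(0) = (1+m^2)/(2m) > 1$, the denominator is nonzero, and $P^\star$ is a polynomial of degree exactly $2n$ with $P^\star(0)=1$ satisfying $\sup_{\Lambda}|P^\star| = 1/|T_n(\sigma_2^{\Lambda}(0))|$. Using the classical equioscillation of $T_n$ at the Chebyshev nodes $y_k = \cos(k\pi/n)$ for $k=0,\dots,n$, the bijectivity from the previous step produces $2(n+1)$ points in $\Lambda$ at which $|P^\star|$ attains its maximum. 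Enumerating them in increasing order of $\lambda$ and recording the signs of $P^\star$, one obtains the pattern $+,-,+,\dots,(-1)^n,(-1)^n,(-1)^{n-1},\dots,+$, where the duplicated sign arises from the two pre-images $L_1$ and $\mu_2$ of the value $-1$. This pattern exhibits exactly $2n$ sign changes between consecutive points.

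For the minimality, I would argue by contradiction: suppose $P\in \mathbb{R}_{2n}[X]$ satisfies $P(0)=1$ and $\sup_{\Lambda}|P| < \sup_{\Lambda}|P^\star|$. At each of the $2n+2$ extremal points $\lambda_i$ of $P^\star$, the difference $D = P^\star - P$ has the same sign as $P^\star(\lambda_i)$, so $D$ inherits those $2n$ sign changes, and the intermediate value theorem yields $2n$ pairwise distinct roots of $D$ in $\Lambda$. Combined with $D(0)=0$ and $0\notin\Lambda$, this gives $2n+1$ distinct roots of the degree $\leq 2n$ polynomial $D$, forcing $D\equiv 0$ and contradicting the strict inequality. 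This establishes that $P^\star$ solves the minimax problem of the proposition. The main obstacle is expected to be the algebraic verification that $\sigma_2^{\Lambda}(\mu_1) = 1$ for the specific $m$ in Algorithm~\ref{algo:alternating_heavy_ball}: it reduces to a quadratic equation in a suitable reparametrization of $m$, and the equal-length assumption is precisely what makes the two conditions at $\mu_1$ and $L_2$ coincide, so that a single value of $m$ realizes both simultaneously and ensures that $\sigma_2^{\Lambda}$ is genuinely a two-to-one cover of $[-1,1]$ by $\Lambda$.
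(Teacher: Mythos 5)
Your proposal is correct, but it follows a different route than the paper's proof. The paper derives the formula by appealing to its own general machinery: it invokes Theorem~\ref{thm:optimality_from_equioscillation} (which says $T_n\circ\sigma_K$ is minimax for all $n$ iff $\sigma_K$ equioscillates and $\sigma_K^{-1}([-1,1])=\overline{\Lambda}$, a result built in turn on the Lagrange-interpolation proof of Theorem~\ref{th:equioscillation}), writes down the four equioscillation identities $\sigma_2^{\Lambda}(\mu_1)=\sigma_2^{\Lambda}(L_2)=1$, $\sigma_2^{\Lambda}(L_1)=\sigma_2^{\Lambda}(\mu_2)=-1$ as \emph{constraints}, and then solves them to \emph{produce} the constant $c=2\tfrac{\rho^2-R^2}{1-R^2}$ and hence the formula for $m$. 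You go in the opposite direction: you take $m,h_0,h_1$ as given, verify the four identities by direct substitution (observing that the equal-length hypothesis is exactly what makes the $\mu_1$ and $L_2$ conditions collapse into one), note the quadratic is a two-to-one cover of $[-1,1]$ over $\Lambda$, collect the $2n+2$ extremal points of $T_n\circ\sigma_2^{\Lambda}$ with the sign pattern having $2n$ changes, and finish with a self-contained root-counting argument: $D=P^\star-P$ picks up $2n$ zeros from the sign changes plus one at $0$, which is one too many for a degree-$2n$ polynomial, a contradiction. Your version is more elementary and does not require the appendix theorems; the paper's version is a derivation that simultaneously explains where the closed form for $m$ comes from and slots into a reusable framework for cycles of any length $K$.

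One small gap worth flagging: your contradiction uses a strict inequality $\sup_\Lambda|P|<\sup_\Lambda|P^\star|$, which establishes that $P^\star$ is \emph{a} minimizer, but the statement asserts it is \emph{the} $\argmin$. To get uniqueness one must handle the equality case, e.g.\ by the standard weak-sign-change refinement (if $D(\lambda_i)=0$ count it as a zero directly, otherwise get a sign change between consecutive extremal points), which again yields $2n+1$ zeros counted with multiplicity and forces $D\equiv 0$. The paper inherits uniqueness from Theorem~\ref{th:equioscillation}. This is a routine addition, but as written your argument proves a slightly weaker statement.
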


\subsection{Generalization to Longer Cycles} \label{ssec:finding_sigma}

The polynomial in Example \ref{ex:lambda_interval} uses a linear link function $\sigma_1^\Lambda$ to map $\Lambda$ to $[-1, 1]$. In Proposition~\ref{prop:lambda_union_two_interval}, we see that a degree \emph{two} link function $\sigma_2^{\Lambda}$ can be used to find the minimax polynomial when $\Lambda$ is the union of two intervals. This section generalizes this approach and considers higher-order polynomials for $\sigma_K$.

We start with the following parametrization, with an arbitrary polynomial $\sigma_K$ of degree $K$,
\begin{equation} \label{eq:parametrization_z}
    P_{t}(\lambda; \sigma_K) \triangleq \frac{T_n\left( \sigma_K(\lambda) \right)}{T_n\left(\sigma_K(0)\right)}, \quad \forall t=Kn,\, n\in\mathbb{N}^+_0\,.
\end{equation}

As we will see in the next subsection, this parametrization allows considering cycles of step-sizes. Our goal now is to find the $\sigma_K$ that obtains the fastest convergence rate possible. The next proposition quantifies its impact on the asymptotic rate and its proof can be found in~\Cref{apx:derivation-of-optimal-algorithm-withstep-sizes}.

\begin{restatable}{Prop}{rateconvergencesigma} \label{prop:rate_convergence_sigma}
    For a given $\sigma_K$ such that ${\sup}_{\lambda \in \Lambda} |\sigma_K(\lambda)| = 1$, the asymptotic rate factor $\tau^{\sigma_K}$ of the method associated to the polynomial \eqref{eq:parametrization_z} is
    \begin{gather} 
        \textstyle 1-\tau^{\sigma_K} = \lim\limits_{t\rightarrow\infty} \sqrt[t]{\underset{\lambda \in \Lambda}{\sup} |P_{t}(\lambda; \sigma_K)|} = \left(\sigma_0-\sqrt{\sigma_0^2-1}\right)^{\frac{1}{K}}, \nonumber\\
        \text{with } \sigma_0 \triangleq \sigma_K(0)\,. \label{eq:asymptotic_rate}
    \end{gather}
\end{restatable}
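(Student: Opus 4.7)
The plan is to estimate $\sup_{\lambda \in \Lambda} |P_t(\lambda; \sigma_K)|$ by controlling its numerator and denominator separately, then take the $t$-th root and pass to the limit. The key fact to invoke is the standard hyperbolic representation of Chebyshev polynomials outside $[-1,1]$, namely
\[
T_n(x) = \tfrac{1}{2}\left[(x+\sqrt{x^2-1})^n + (x-\sqrt{x^2-1})^n\right], \qquad |x|\geq 1,
\]
together with the boundedness $|T_n(x)|\leq 1$ on $[-1,1]$.

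For the numerator, the assumption ${\sup}_{\lambda \in \Lambda} |\sigma_K(\lambda)| = 1$ gives $\sigma_K(\Lambda)\subseteq[-1,1]$, so $|T_n(\sigma_K(\lambda))|\leq 1$ uniformly on $\Lambda$. Moreover, by continuity of $\sigma_K$ on the compact set $\overline{\Lambda}$, the supremum $1$ is attained at some $\lambda^\star\in \overline{\Lambda}$ with $\sigma_K(\lambda^\star)=\pm 1$, and since $|T_n(\pm 1)|=1$ one concludes $\sup_{\lambda\in\Lambda}|T_n(\sigma_K(\lambda))|=1$ for every $n$.

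For the denominator, note that $0\notin\Lambda\subseteq(0,\infty)$, and (implicitly, for the polynomial to be of any interest) $|\sigma_0|>1$; without loss of generality I take $\sigma_0>1$ (the other case is symmetric since $T_n$ is even or odd). The hyperbolic formula then yields the asymptotics
\[
T_n(\sigma_0) = \tfrac{1}{2}(\sigma_0+\sqrt{\sigma_0^2-1})^n\bigl(1+o(1)\bigr)
\]
as $n\to\infty$, because $(\sigma_0-\sqrt{\sigma_0^2-1})^n\to 0$ geometrically.

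Combining the two bounds with $t=Kn$,
\[
\sqrt[t]{\sup_{\lambda\in\Lambda}|P_t(\lambda;\sigma_K)|} = \left(\tfrac{1}{T_n(\sigma_0)}\right)^{1/(Kn)} \xrightarrow[n\to\infty]{} \left(\sigma_0+\sqrt{\sigma_0^2-1}\right)^{-1/K},
\]
the prefactor $2^{1/(Kn)}$ being washed out in the limit. Finally, using the reciprocal identity $(\sigma_0+\sqrt{\sigma_0^2-1})(\sigma_0-\sqrt{\sigma_0^2-1})=1$ gives the claimed formula $1-\tau^{\sigma_K}=(\sigma_0-\sqrt{\sigma_0^2-1})^{1/K}$. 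The only delicate point is arguing that the supremum of the numerator is genuinely $1$ (not strictly less), which is where the hypothesis ${\sup}_{\lambda\in\Lambda}|\sigma_K(\lambda)|=1$ enters critically; everything else is a direct calculation from the Chebyshev hyperbolic formula.
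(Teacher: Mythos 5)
Your proof follows essentially the same route as the paper's: use the hyperbolic representation of $T_n$ outside $[-1,1]$ to get $T_n(\sigma_0)\sim \tfrac12(\sigma_0+\sqrt{\sigma_0^2-1})^n$, bound the numerator $\sup_\lambda|T_n(\sigma_K(\lambda))|$ by $1$ using $\sigma_K(\Lambda)\subseteq[-1,1]$, take $t$-th roots and pass to the limit, then invoke $(\sigma_0+\sqrt{\sigma_0^2-1})(\sigma_0-\sqrt{\sigma_0^2-1})=1$. The only difference is that you make explicit the (minor, but worth noting) argument that the numerator supremum equals $1$ exactly rather than being merely bounded by it, which the paper leaves implicit.
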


For a fixed $K$, the asymptotic rate \eqref{eq:asymptotic_rate} is a decreasing function of $\sigma_0$. This motivates the introduction of the ``optimal'' degree $K$ polynomial $\sigma_K^{\Lambda}$ as the one that solves
\begin{equation}
    \sigma_K^{\Lambda} \triangleq \argmax_{\sigma \in \mathbb{R}_{K}[X]} \sigma(0) \quad \text{s.t.} \;\; \underset{\lambda \in \Lambda}{\sup} |\sigma(\lambda)| \leq 1\,. \label{eq:optimal_sigma}
\end{equation}
Using the above definition, we recover the $\sigma_1^{\Lambda}$ and $\sigma_2^{\Lambda}$ from Example \ref{ex:lambda_interval} and Proposition \ref{prop:lambda_union_two_interval}.

\paragraph{Finding the polynomial.} Finding an exact and explicit solution for the general $K$ and $\Lambda$ case is unfortunately out of reach, as it involves solving a system of $K$ non-linear equations. Here we describe an approximate approach. Let $\sigma_K^{\Lambda}(x) = \sum_{i=0}^K \sigma_ix^i$. We propose to discretize $\Lambda$ into $N$ different points $\{\lambda_j\}$, then solve the linear problem
\begin{equation}
    \max_{\sigma_i} \,\sigma_0 \quad \text{ s.t. } \;\; {\textstyle -1 \leq \sum_{i=0}^K \sigma_i \lambda_j^i \leq 1}, \quad\forall j=1,\,\ldots,\, N\,.
\end{equation}
To check the optimality, it suffices to verify that the polynomial $\sigma_K^{\Lambda}$ satisfies the \textit{equioscillation} property (Definition~\ref{def:equiosc}), as depicted in Figure \ref{fig:minimax_polynomials}.

\begin{Rem}[Relationship between optimal and minimax polynomials]\label{rem:relationship_btw_optimal_and_minimax}
    For later reference, we note that the optimal polynomial $\sigma_K^{\Lambda}$ is equivalent to finding a minimax polynomial on $\Lambda$ and to rescale it. More precisely,  $\sigma_K^{\Lambda}$ is optimal if and only if $\sigma_K^{\Lambda}/\sigma_K^{\Lambda}(0)$ is minimax.
\end{Rem}

\begin{figure*}
    \centering
    \includegraphics[width=\textwidth,trim=0.5cm 0cm 0cm 0cm, clip=true]{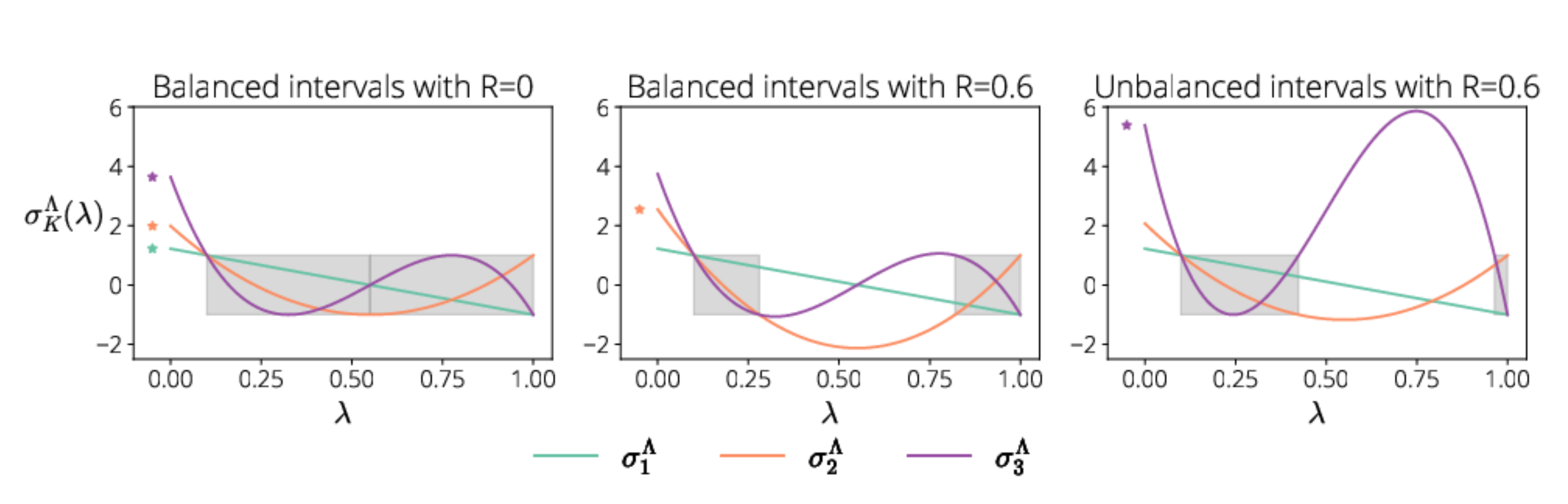}
    \vspace{-1em}\caption{Examples of optimal polynomials $\sigma_{K}^{\Lambda}$ from \eqref{eq:optimal_sigma}, all of them verifying the equioscillation property (Definition \ref{def:equiosc}).
    The ``$\star$'' symbol highlights the degree of $\sigma_K^{\Lambda}$ that achieves the best asymptotic rate $\tau^{\sigma_K^{\Lambda}}$ in \eqref{eq:asymptotic_rate} amongst all $K$ (see Section \ref{subsec:optim}). \textbf{(Left)} When $\Lambda$ is an unique interval, all 3 polynomials are equivalently optimal \tausigmaone=\tausigmatwo=\tausigmathree. \textbf{(Center)} When $\Lambda$ is the union of two intervals of the same size, the degree 2 polynomial is optimal \tausigmatwo>\tausigmathree>\tausigmaone. This is expected given the  result in Proposition \ref{prop:lambda_union_two_interval}. \textbf{(Right)} When $\Lambda$ is the union of two unbalanced intervals, the degree 3 polynomial instead achieves the best asymptotic rate \tausigmathree>\tausigmatwo>\tausigmaone (see Section~\ref{subsec:optim}).}
    \label{fig:minimax_polynomials}
\end{figure*}

\subsection{Cyclical Heavy Ball and (Non-)asymptotic Rates of Convergence} \label{ssec:resulting_algo}

We now describe the link between $\sigma_{K}^{\Lambda}$ and Algorithm \ref{algo:cyclical_chebyshev}. Using the recurrence for Chebyshev polynomials of the first kind in~\eqref{eq:parametrization_z}, we have $ \forall t = Kn,\, n\in\mathbb{N}^+_0$,

\begin{align*}
     \frac{T_{n+1}(\sigma_K^{\Lambda}(\lambda))}{T_{n+1}(\sigma_K^{\Lambda}(0))} &= 2\sigma_K^{\Lambda}(\lambda) \left[\frac{T_{n}(\sigma_K^{\Lambda}(\lambda))}{T_{n}(\sigma_K^{\Lambda}(0))} \right]\underbrace{\left[\frac{T_{n}(\sigma_K^{\Lambda}(0))}{T_{n+1}(\sigma_K^{\Lambda}(0))}\right]}_{=a_n}\\
     &\quad- \left[\frac{T_{n-1}(\sigma_K^{\Lambda}(\lambda))}{T_{n-1}(\sigma_K^{\Lambda}(0))} \right]\underbrace{\left[\frac{T_{n-1}(\sigma_K^{\Lambda}(0))}{T_{n+1}(\sigma_K^{\Lambda}(0))}\right]}_{=b_n}.
\end{align*}

It still remains to find an algorithm associated with this polynomial. To obtain one in the form of Algorithm \ref{algo:cyclical_heavy_ball}, one can use the stationary behavior of the recurrence. From \citep{scieur2020universal}, the coefficients $a_n$ and $b_n$ converge as $n\rightarrow \infty$ to their fixed-points $a_\infty$ and $b_\infty$. We therefore consider here an asymptotic polynomial $\bar{P}_{t}(\lambda;\sigma_K^{\Lambda})$, whose recurrence satisfies
\begin{equation} \label{eq:asymptotic_polynomial}
    \bar{P}_{t}(\lambda;\sigma_K^{\Lambda}) = 2a_\infty \sigma_K^{\Lambda}(\lambda) \bar{P}_{t-K}(\lambda;\sigma_K^{\Lambda})- b_\infty\bar{P}_{t-2K}(\lambda;\sigma_K^{\Lambda})\,.
\end{equation}
Similarly to $K=1$, where this limit recursion corresponds to PHB, this recursion corresponds to an instance of Algorithm~\ref{algo:cyclical_chebyshev} (see~\Cref{prop:sequence_stepsizes} below), further motivating the cyclical heavy ball algorithm.

The following theorem is the main result of this section and characterizes the convergence rate of Algorithm \ref{algo:cyclical_heavy_ball} for arbitrary momentum and step-size sequence $\{h_i\}_{i\in\llbracket 1,K\rrbracket}$.

\begin{restatable}{Th}{generalrateconvergence}
    \label{thm:general_rate_convergence}
   With an arbitrary momentum $m$ and an arbitrary sequence of step-sizes $\{h_i\}$ , the worst-case rate of convergence $ 1 - \tau $ of Algorithm \ref{algo:cyclical_heavy_ball} on $\mathcal{C}_\Lambda$ is
    \begin{equation}
           \left\{
            \begin{array}{ll}
               \!\!\!\! \sqrt{m}
                & 
                \mathrm{if~} \sigma_{*}  \leq 1
                \\
                \!\!\!\! \sqrt{m}\left( \sigma_{*} + \sqrt{\sigma_{*}^2 - 1} \right)^{K^{-1}}
                & 
                \mathrm{if~} \sigma_{*}  \in\left(1,\; \dfrac{1+m^K}{2\left(\sqrt{m}\right)^K}\right)
                \\
                \!\!\!\! \geq 1\,(\text{no convergence)}
                &  
                \mathrm{if~} \sigma_{*}  \geq \dfrac{1+m^K}{2\left(\sqrt{m}\right)^K}\, , 
            \end{array}
            \right. \,
        \end{equation}
    where $\sigma_{*}\triangleq \underset{\lambda \in \Lambda}{\sup} |\sigma(\lambda;\{h_i\}, m)|$, $\sigma(\lambda;\{h_i\},m)$ is the $K$-degree polynomial
    \begin{equation}
    \label{eq:sigma_stepsize}
        \sigma(\lambda;\{h_i\}, m) \triangleq \frac{1}{2} \mathrm{Tr} \left(M_1M_2\ldots M_K\right),
    \end{equation}
    and $M_i = \begin{bmatrix}
            \frac{1+m- h_{K-i}\lambda}{\sqrt{m}}   & -1 \\ 
            1                           &  0 
        \end{bmatrix} $.
\end{restatable}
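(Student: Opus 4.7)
The plan is to reduce the analysis to a per-eigenvalue scalar recurrence, rewrite one full cycle as a $2\times 2$ linear map, and then extract the asymptotic rate from its spectral radius.

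First, I would diagonalize $H$ and project the update $x_{t+1}-x_* = (1+m)(x_t-x_*) - h_{\mathrm{mod}(t,K)} H(x_t - x_*) - m(x_{t-1}-x_*)$ onto an eigenvector of $H$ with eigenvalue $\lambda \in \Lambda$. Denoting by $u_t$ the corresponding coordinate of $x_t - x_*$, this yields the scalar three-term recurrence $u_{t+1} = (1+m - h_{\mathrm{mod}(t,K)}\lambda)u_t - m u_{t-1}$. Normalizing by $w_t = u_t/(\sqrt{m})^t$ gives $w_{t+1} = \tfrac{1+m-h_{\mathrm{mod}(t,K)}\lambda}{\sqrt{m}}w_t - w_{t-1}$, i.e., a single step is described by the companion matrix appearing in the statement. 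After $K$ steps, one obtains $(w_{nK+K}, w_{nK+K-1})^\top = M(\lambda)\,(w_{nK}, w_{nK-1})^\top$ with $M(\lambda) = M_1 M_2 \cdots M_K$; the indexing $h_{K-i}$ in the statement precisely reverses the order of composition. Each factor $M_i$ has determinant $1$, hence $\det M(\lambda) = 1$.

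Next, I would exploit $\det M(\lambda) = 1$ to write the eigenvalues of $M(\lambda)$ as $\mu_\pm(\lambda) = \sigma(\lambda) \pm \sqrt{\sigma(\lambda)^2 - 1}$, where $2\sigma(\lambda) = \mathrm{Tr}(M(\lambda))$ matches the polynomial defined in \eqref{eq:sigma_stepsize}. The asymptotic per-step growth of $|w_t|$ along this eigendirection is controlled by $\rho(M(\lambda))^{1/K}$. Two regimes appear: if $|\sigma(\lambda)|\leq 1$, the eigenvalues are complex conjugate with product $1$ and hence lie on the unit circle, so $|w_t|$ is bounded (up to at most linear growth in the degenerate case $|\sigma(\lambda)|=1$ where a Jordan block can arise), which yields $\sqrt[t]{|u_t|}\to \sqrt{m}$; if $|\sigma(\lambda)|>1$, the eigenvalues are real, $|\mu_+(\lambda)|=|\sigma(\lambda)|+\sqrt{\sigma(\lambda)^2-1}>1$, and $\sqrt[t]{|u_t|}\to \sqrt{m}\bigl(|\sigma(\lambda)|+\sqrt{\sigma(\lambda)^2-1}\bigr)^{1/K}$.

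Taking the worst case over $\lambda\in\Lambda$, since $s\mapsto \sqrt{m}\bigl(|s|+\sqrt{s^2-1}\bigr)^{1/K}$ is strictly increasing in $|s|$ on $(1,\infty)$ and equals $\sqrt{m}$ for $|s|\leq 1$, the asymptotic rate factor $1-\tau$ is determined by $\sigma_* = \sup_{\lambda\in\Lambda}|\sigma(\lambda)|$, producing the first two branches of the statement. The no-convergence threshold is then obtained by solving $\sqrt{m}\bigl(\sigma_*+\sqrt{\sigma_*^2-1}\bigr)^{1/K}\geq 1$: isolating the square root and squaring gives $\sigma_* \geq (1+m^K)/(2(\sqrt{m})^K)$, yielding the third branch.

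The main technical obstacle is ensuring tightness rather than just upper bounds: one must verify that the supremum $\sigma_*$ is actually attained in the limit by a worst-case initialization, which amounts to placing the initial error along an eigenvector achieving $\sigma_*$ (or a near-optimal one by continuity of $\sigma$ on the compact set $\overline{\Lambda}$). A secondary subtlety is the degenerate boundary case $|\sigma(\lambda)|=1$, where $M(\lambda)$ may fail to be diagonalizable; the Jordan-block factor contributes only polynomial growth in $t$ and is absorbed into the $t$-th root, preserving the asymptotic rate $\sqrt{m}$ and matching the expression of Proposition \ref{prop:rate_convergence_sigma} in the specific case where $\sigma = \sigma_K^{\Lambda}$.
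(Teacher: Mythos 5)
Your proposal is correct and follows the same core route as the paper's proof: project onto an eigenspace of $H$, divide by $(\sqrt m)^t$ to remove the $m$ factor from the three-term recurrence, and encode a full cycle of $K$ steps as the product $M_1\cdots M_K$ of companion matrices, each with determinant one, so that $\det = 1$ and $\mathrm{Tr} = 2\sigma(\lambda)$ govern the dynamics.

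The one noteworthy variation is in the final extraction of the rate. The paper pushes the computation one step further and expresses the normalized residual polynomial $\tilde P_{nK}(\lambda)$ explicitly as a linear combination of Chebyshev polynomials of the second kind, $U_n(\sigma(\lambda))$ and $U_{n-1}(\sigma(\lambda))$, then bounds $|U_n|$ by $n+1$ on $[-1,1]$ and by $(\sigma+\sqrt{\sigma^2-1})^n$ outside. You instead read the rate directly off the spectral radius of $M(\lambda)$: eigenvalues $\sigma\pm\sqrt{\sigma^2-1}$ with product $1$, on the unit circle when $|\sigma|\le 1$ and real otherwise, with a Jordan block possible at $|\sigma|=1$ contributing only polynomial growth. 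These are two faces of the same fact (the solution of the recurrence $w_{n+1}=2\sigma w_n - w_{n-1}$ grows at rate $\sigma\pm\sqrt{\sigma^2-1}$), so your argument reaches the asymptotic rate a bit more directly. What the paper's Chebyshev representation buys, and yours does not immediately give, is the \emph{non-asymptotic} bound $r_t = O\left(t(\sqrt m)^t\right)$ in the robust region, which is used separately in Proposition~\ref{prop:sequence_stepsizes} and Corollary~\ref{cor:rate_convergence_heavy_ball_alternating_step_size}. Your tightness remark is the right idea and is in fact already baked into the framework via the identity $r_t = \sup_{\lambda\in\Lambda}|P_t(\lambda)|$ from Proposition~\ref{prop:link_algo_poly} and equation~\eqref{eq:rate_convergence_polynomial}, so no separate lower-bound construction is needed; continuity of $\sigma$ on $\overline\Lambda$ handles the case where the supremum is not attained on $\Lambda$ itself.
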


By optimizing over these parameters, we obtain the Algorithm \ref{algo:cyclical_chebyshev}, a method associated to \eqref{eq:asymptotic_polynomial}, whose rate is described in Proposition~\ref{prop:sequence_stepsizes}. All proofs can be found in Appendix \ref{apx:derivation-of-heavy-ball-withstep-sizes-cycle}.

\begin{algorithm}[ht] \label{algo:cyclical_chebyshev}
    \caption{Cyclical (arbitrary $K$) heavy ball with optimal parameters}
    \SetAlgoLined
    \textbf{Input:} Eigenvalue localization $\Lambda$, cycle length $K$, initialization $x_0$. \\
    \textbf{Preprocessing:}
    \begin{enumerate}
        \item Find the polynomial $\sigma_K^{\Lambda}$ such that it satisfies \eqref{eq:optimal_sigma}.
        \item Set step-sizes $\{h_i\}_{i=0, \ldots, K-1}$ and momentum $m$  that satisfy resp. equations \eqref{eq:equation_sequence_stepsizes} and \eqref{eq:optimal_momentum}.
    \end{enumerate}
    \textbf{Set} $ x_1 ~ = x_0 - \mfrac{h_0}{1 + m}\nabla f(x_0) $\\
    \For{$t = 1,\, 2,\,\ldots$}{
        \vspace{-2ex}
        \begin{align*}
            x_{t+1} & = ~ x_t - h_{\text{mod}(t,K)} \nabla f(x_t) + m(x_{t}-x_{t-1})
        \end{align*}
        \vspace{-2ex}
    }
\end{algorithm}

\begin{restatable}{Prop}{sequencestepsizes}
    \label{prop:sequence_stepsizes}
     Let $\sigma(\lambda;\{ h_i\},m)$ be the polynomial defined by \eqref{eq:sigma_stepsize}, and $\sigma_K^{\Lambda}$ be the optimal link function of degree $K$ defined by \eqref{eq:optimal_sigma}. If the momentum $m$ and the sequence of step-sizes $\{h_i\}$ satisfy
    \begin{equation}
        \label{eq:equation_sequence_stepsizes}
        \sigma(\lambda;\{ h_i\}, m) = \sigma_K^{\Lambda}(\lambda)\,,
    \end{equation}
    then \textbf{1)} the parameters are optimal, in the sense that they minimize  the asymptotic rate factor from Theorem \ref{thm:general_rate_convergence}, \textbf{2)} the optimal momentum parameter is
    \begin{equation} \label{eq:optimal_momentum}
        \textstyle m = \big(\sigma_0 - \sqrt{\sigma_0^2-1}\big)^{2/K}, \quad \text{where}\;\sigma_0 = \sigma_K^{\Lambda}(0)\,,
    \end{equation}
    \textbf{3)} the iterates from Algo. \ref{algo:cyclical_chebyshev} with parameters $\{h_i\}$ and $m$ form a polynomial with recurrence \eqref{eq:asymptotic_polynomial}, and \textbf{4)}~Algorithm~\ref{algo:cyclical_chebyshev} achieves the worst-case rate $r_t^\text{Alg. 3}$ and the asymptotic rate factor $1-\tau^\text{Alg. 3}$ 
    \begin{gather}
        \textstyle r_t^\text{Alg. 3} = O\left(t\left(\sigma_0 - \sqrt{\sigma_0^2-1}\right)^{t/K}\right),\\
        1 - \tau^\text{Alg. 3} = \left(\sigma_0 - \sqrt{\sigma_0^2-1}\right)^{1/K}\,. \nonumber
    \end{gather}
\end{restatable}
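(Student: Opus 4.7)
The plan is to first derive the polynomial structure of the iterates, leverage the hypothesis $\sigma=\sigma_K^\Lambda$ to pin down the momentum and the per-cycle recurrence, and finally establish optimality by reducing to the extremal property defining $\sigma_K^\Lambda$.

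First I would set $e_t = x_t - x_*$ and observe that the update of Algorithm~\ref{algo:cyclical_chebyshev} gives $e_t = Q_t(H)e_0$, where the polynomials $Q_t$ obey the scalar three-term recurrence $Q_{t+1} = (1+m-h_{\mathrm{mod}(t,K)}\lambda)Q_t - m Q_{t-1}$. This recursion is governed by a $2\times 2$ transition matrix per step, each with determinant $m$, whose product $B(\lambda)$ over a full cycle therefore has $\det B = m^K$. A single diagonal conjugation matches these per-step matrices with the $M_i$ of Theorem~\ref{thm:general_rate_convergence} up to a factor $\sqrt{m}$, yielding $\operatorname{tr}B = 2m^{K/2}\sigma(\lambda;\{h_i\},m)$. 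Applying Cayley--Hamilton to $B$ then produces the per-cycle recursion $Q_{t+2K} = 2m^{K/2}\sigma(\lambda)\,Q_{t+K} - m^K\,Q_t$; under the hypothesis $\sigma=\sigma_K^\Lambda$ this is exactly \eqref{eq:asymptotic_polynomial} with $a_\infty = m^{K/2}$ and $b_\infty = m^K$ (matching the Chebyshev-coefficient limits from \citep{scieur2020universal}), settling statement~(3). Evaluating the per-cycle recursion at $\lambda=0$, while using $Q_t(0)=1$ for all $t$ (preserved by the initialization $x_1 = x_0 - \frac{h_0}{1+m}\nabla f(x_0)$ and by the recurrence), reduces it to $u^2 - 2\sigma_0 u + 1 = 0$ for $u=m^{K/2}$; selecting the stable root $u\in(0,1)$ gives statement~(2).

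For statement~(4), the characteristic roots $r_\pm(\lambda) = m^{K/2}\bigl(\sigma_K^\Lambda(\lambda)\pm\sqrt{\sigma_K^\Lambda(\lambda)^2-1}\bigr)$ of the per-cycle recursion have modulus exactly $m^{K/2}$ on $\Lambda$ because $|\sigma_K^\Lambda|\le 1$ there. A standard analysis of second-order linear recursions (with an $n$ factor at the endpoints where the two roots coalesce) gives $\sup_{\lambda\in\Lambda}|Q_{nK}(\lambda)| = O(n\,m^{nK/2})$, translating into the worst-case rate $r_t^{\text{Alg. 3}} = O\bigl(t\,(\sigma_0-\sqrt{\sigma_0^2-1})^{t/K}\bigr)$ and the claimed asymptotic factor.

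The main obstacle is statement~(1), the optimality claim. The plan there is to pick arbitrary admissible parameters $(m',\{h_i'\})$, set $\sigma_*' = \sup_{\lambda\in\Lambda}|\sigma(\lambda;\{h_i'\},m')|$, and note that $\sigma(\cdot;\{h_i'\},m')/\sigma_*'$ is feasible in the definition~\eqref{eq:optimal_sigma} of $\sigma_K^\Lambda$, so $\sigma(0;\{h_i'\},m')\le \sigma_*'\sigma_0$. A key algebraic fact is that $M_i|_{\lambda=0}$ is the same matrix for every $i$, so a direct eigenvalue computation yields $\sigma(0;\{h_i'\},m') = \tfrac12(m'^{-K/2}+m'^{K/2})$ independently of $\{h_i'\}$. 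This produces the scalar coupling constraint $m'^{-K/2}+m'^{K/2}\le 2\sigma_*'\sigma_0$, and, up to a $K$-th root, the rate from Theorem~\ref{thm:general_rate_convergence} to be minimized becomes the product $u\,v$ with $u=m'^{K/2}$ and $v=\sigma_*'+\sqrt{\sigma_*'^2-1}\ge 1$. A short one-variable calculus argument on the constrained minimum (monotonicity of $v\mapsto u_{\min}(v)\cdot v$ in $v\ge 1$) then shows that $u\,v$ is minimized at $v=1$, with minimum value $\sigma_0-\sqrt{\sigma_0^2-1}$; equality forces $\sigma_*'=1$ and $\sigma(0;\{h_i'\},m')=\sigma_0$, which by uniqueness of the minimax polynomial (Remark~\ref{rem:relationship_btw_optimal_and_minimax}) is attained exactly when $\sigma(\cdot;\{h_i'\},m')=\sigma_K^\Lambda$, closing the loop.
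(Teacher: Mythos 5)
Your proof is correct and follows the same essential route as the paper's: the three-term recurrence for the residual polynomials, the per-cycle transfer matrix with determinant $m^K$ and trace $2m^{K/2}\sigma(\lambda;\{h_i\},m)$, the observation that $\sigma(0;\{h_i\},m)=\tfrac12\bigl(m^{K/2}+m^{-K/2}\bigr)$ depends only on $m$, the identification of $a_\infty,b_\infty$ with $m^{K/2},m^K$, and the rate formulas from Theorem~\ref{thm:general_rate_convergence}. Parts (2), (3), (4) are essentially the paper's proof rephrased (Cayley--Hamilton in place of the paper's direct elimination of odd-indexed terms, and the two-root modulus argument in place of the $U_n$-bound; both give the $O(t\,m^{t/2})$ robust-region rate).

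The one place where you genuinely deviate is the optimality argument for part (1). The paper first proves the renormalization inequality
\[
\bigl(\sigma_0-\sqrt{\sigma_0^2-1}\bigr)\bigl(\sigma_{\sup}+\sqrt{\sigma_{\sup}^2-1}\bigr)\ \ge\ \tfrac{\sigma_0}{\sigma_{\sup}}-\sqrt{\bigl(\tfrac{\sigma_0}{\sigma_{\sup}}\bigr)^2-1},
\]
which lets it argue that one may as well restrict to $\sigma_{\sup}\le1$ and then maximize $\sigma_0$. That step tacitly assumes the rescaled polynomial $\sigma/\sigma_{\sup}$ is reachable by some other tuning. Your version avoids that: you derive the single scalar constraint $u+1/u\le\sigma_0(v+1/v)$ directly from the extremal definition \eqref{eq:optimal_sigma} of $\sigma_K^\Lambda$ (via feasibility of $\sigma/\sigma_*'$), and then minimize $uv$ by showing $v\mapsto u_{\min}(v)\,v$ is nondecreasing on $v\ge1$. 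This is a clean lower-bound argument over all admissible parameters and is a minor improvement in rigor. The monotonicity you invoke does hold (squaring reduces it to $\sigma_0\ge1$), though your write-up only asserts it; since this is the load-bearing step of part (1), it deserves the two lines of computation. One small point worth stating explicitly: your parametrization $v=\sigma_*'+\sqrt{\sigma_*'^2-1}$ presumes $\sigma_*'\ge1$; the case $\sigma_*'\le1$ collapses to $v=1$ in the rate formula, and the constraint only tightens, so the bound still holds, but this case split should be mentioned rather than left implicit.
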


\paragraph{Solving the system \eqref{eq:equation_sequence_stepsizes}}
The system is constructed by identification of the coefficients in both polynomials $\sigma^{\Lambda}_K$ and $\sigma(\lambda;\{h_i\},m)$, which can be solved using a naive grid-search for instance. We are not aware of any efficient algorithm to solve this system exactly, although it is possible to use iterative methods such as steepest descent or Newton's method.

\subsection{Best Achievables Worst-case Guarantees on \texorpdfstring{$\mathcal{C}_{\Lambda}$}{C Lambda}}\label{subsec:optim}

This section discusses the (asymptotic) optimality of Algorithm \ref{algo:cyclical_chebyshev}. In Section \ref{ssec:finding_sigma}, the polynomial $P_{t}(\,\cdot\,; \sigma_K^\Lambda)$ was written as a composition of Chebyshev polynomials with $\sigma_K^{\Lambda}$, defined in \eqref{eq:optimal_sigma}. The best $K$ is chosen as follows: we solve \eqref{eq:optimal_sigma} for several values of $K$, then pick the smallest $K$ among the minimizers of~\eqref{eq:asymptotic_rate}. However, following such steps does not guarantee that the polynomial $P_{t,K}^\Lambda$ is \textit{minimax}, as it is not guaranteed to minimize the worst-case rate $\sup_{\lambda\in\Lambda}|P_t(\lambda)|$ (see~\eqref{eq:rate_convergence_polynomial}).

We give here an optimality certificate, linked to a generalized version of \textit{equioscillation}. In short, if we can find $K$ non overlapping intervals (more formally, whose interiors are disjoint)  $\Lambda_i$ in $\Lambda$ such that $\sigma_{K}^{\Lambda}(\Lambda_i)=[-1, 1]$ then $P_{t,K}^{\Lambda}$ is minimax for $t = nK$, $n\in\mathbb{N}_0^+$. The precise result is in Theorem~\ref{thm:optimality_from_equioscillation}. A direct consequence is the asymptotic optimality of Algorithm \ref{algo:cyclical_chebyshev}.

We note that $\sigma_K^{\Lambda}$ might not exist for a given $\Lambda$. A complete characterization of the set $\Lambda$ for which $\sigma_K^{\Lambda}$ exists is out of the scope of this paper. A partial answer is given in \citep{fischer2011polynomial} when $\Lambda$ is the union of two intervals but the general case remains open.

\section{Local Convergence for Non-Quadratic Functions}\label{sec:local-convergence}

When $f$ is twice-differentiable, we show local convergence rates of Algorithm \ref{algo:cyclical_heavy_ball} (see proof in \Cref{apx:beyond-quadratic-objective:-local-convergence-of-cycling-methods}). As with Polyak heavy ball acceleration, these results are local, as the only known convergence results for Polyak heavy ball beyond quadratic objectives do not lead to an acceleration with respect to Gradient descent without momentum \citep[See][Theorem 4]{ghadimi2015global}.
Moreover, it is possible to find pathological counter-examples and a specific initialization for which the method does not converge globally. \citet[Figure 7]{lessard2016analysis} provides such a counter-example.
Note the latest is \emph{not} twice differentiable, but that a twice differentiable counter-example can be derived from the latest, using for instance convolutions.

\begin{restatable}[Local convergence]{Th}{localconvergencenonquadratic} \label{thm:local_convergence_non_quadratic}
    Let $f: \mathbb{R}^d \mapsto \mathbb{R}$ be a twice continuously differentiable function, $x_*$ a local minimizer, and $H$ be the Hessian of $f$ at $x_*$ with $Sp(H) \subseteq \Lambda$. Let $x_t$ denote the result of running Algorithm \ref{algo:cyclical_heavy_ball} with parameters $h_1, h_2, \cdots, h_K, m$, and let $1 - \tau$ be the linear convergence rate on the quadratic objective \eqref{eq:quad_problem}. Then we have
    \begin{gather}
        \forall \varepsilon>0, \exists ~ \mathrm{open~set} ~ V_{\varepsilon} : x_0,\,x_*\in V_{\varepsilon} \nonumber \\
        \implies \|x_t - x_*\| = O((1 - \tau+\varepsilon)^t)\|x_0-x_*\|.
    \end{gather}
\end{restatable}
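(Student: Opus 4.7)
The plan is to reduce the local analysis to the quadratic case through Taylor expansion, then control the nonlinear perturbation via a spectral-radius / Gelfand-norm argument. First I would expand the gradient around $x_*$: since $f$ is $\mathcal{C}^2$, one can write $\nabla f(x) = H(x-x_*) + r(x)$ with $\|r(x)\| = o(\|x-x_*\|)$ as $x \to x_*$. Substituting this into the update of Algorithm~\ref{algo:cyclical_heavy_ball} yields, for each $t$,
\begin{equation*}
x_{t+1} - x_* = (I - h_{\mathrm{mod}(t,K)} H)(x_t - x_*) + m(x_t - x_{t-1}) - h_{\mathrm{mod}(t,K)} r(x_t).
\end{equation*}

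Next I would rewrite this as a first-order linear recursion with a nonlinear perturbation. Stacking $z_t = (x_t - x_*,\, x_{t-1} - x_*)^\top \in \mathbb{R}^{2d}$ gives $z_{t+1} = A_t z_t + \epsilon_t$, where $A_t \in \mathbb{R}^{2d \times 2d}$ is the usual heavy-ball transition matrix built from $h_{\mathrm{mod}(t,K)}$, $m$ and $H$, and $\|\epsilon_t\| = o(\|z_t\|)$. Because the step-sizes repeat with period $K$, the cycle operator $\Pi \triangleq A_{K-1}\cdots A_1 A_0$ is independent of the cycle index. By the polynomial correspondence of Proposition~\ref{prop:link_algo_poly} together with Theorem~\ref{thm:general_rate_convergence}, applied to the purely quadratic iteration $z_{t+1} = A_t z_t$ with Hessian $H$ satisfying $\mathrm{Sp}(H) \subseteq \Lambda$, the spectral radius of $\Pi$ is at most $(1-\tau)^K$.

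Then I would invoke Gelfand's formula: for every $\eta > 0$ there exists a norm $\|\cdot\|_\eta$ on $\mathbb{R}^{2d}$ whose induced operator norm satisfies $\|\Pi\|_\eta \leq ((1-\tau) + \eta)^K$. Fix $\eta = \varepsilon/2$. Using continuity of $r$ and $\|r(x)\| = o(\|x-x_*\|)$, one can choose an open neighborhood $V_\varepsilon \ni x_*$ (expressed in the norm $\|\cdot\|_\eta$) small enough that, whenever $z_t$ corresponds to iterates inside $V_\varepsilon$, one has $\|\epsilon_t\|_\eta \leq \delta \|z_t\|_\eta$ with $\delta$ so small that telescoping the perturbation over one cycle gives
\begin{equation*}
\|z_{t+K}\|_\eta \leq \bigl(\|\Pi\|_\eta + C\delta\bigr)\|z_t\|_\eta \leq \bigl((1-\tau)+\varepsilon\bigr)^K \|z_t\|_\eta,
\end{equation*}
for a constant $C$ depending only on the $A_i$'s. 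A straightforward induction on cycles, combined with bounding the intra-cycle growth by a uniform constant, yields $\|z_t\|_\eta \leq C' ((1-\tau)+\varepsilon)^t \|z_0\|_\eta$. Norm equivalence on finite-dimensional $\mathbb{R}^{2d}$ then converts this back to the Euclidean bound claimed in the statement.

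The main obstacle is the standard but delicate step of building $V_\varepsilon$ so that (i) the Taylor remainder on $V_\varepsilon$ is small enough in the norm $\|\cdot\|_\eta$ to absorb the slack between the true spectral radius and the operator-norm bound supplied by Gelfand, and (ii) the iterates remain in $V_\varepsilon$ for all $t \geq 0$, so that the perturbation bound is valid at every step. Point (ii) is handled by first selecting $V_\varepsilon$ as a sublevel set of $\|\cdot\|_\eta$ and then shrinking it until the contraction established in (i) dominates, making $V_\varepsilon$ forward-invariant under the iteration.
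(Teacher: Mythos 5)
Your proof is correct and follows the same overall strategy as the paper: lift the iteration to $\mathbb{R}^{2d}$, invoke the quadratic theory to bound the spectral radius of the $K$-step linearization at $(x_*,x_*)$ by $(1-\tau)^K$, build a norm in which that spectral radius becomes an operator-norm bound, and establish forward-invariance of a small neighborhood. The technical execution differs in one respect: you Taylor-expand the gradient as $\nabla f(x) = H(x-x_*) + r(x)$, turn the recursion into $z_{t+1} = A_t z_t + \epsilon_t$, and then control the nonlinear remainder $\epsilon_t$ by telescoping over a cycle and bounding intra-cycle growth separately. The paper instead applies the mean value theorem directly to the composed $k$-step map $S_k$ (for $k$ a multiple of $K$), obtaining $S_k(v_1) - S_k(v_2) = M(v_1,v_2)(v_1-v_2)$ where $M$ is continuous and equals the Jacobian at $(x_*,x_*)$; this packages the perturbation control into the continuity of $M$ and avoids the explicit linear-plus-residual split and the separate intra-cycle bound. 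Both routes are valid; yours is the more standard dynamical-systems phrasing, the paper's is slightly more compact because the MVT form directly produces the contraction estimate. One small remark: the fact you attribute to Gelfand's formula (existence of an operator norm with $\|\Pi\|_\eta$ arbitrarily close to $\rho(\Pi)$) is a closely related but distinct result — the paper cites it as \citep[Proposition A.15]{bertsekas1997nonlinear} — but this does not affect the correctness of your argument.
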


where $\|\cdot\|$ denotes the Euclidean norm.

In short, when Algorithm \ref{algo:cyclical_heavy_ball} is guaranteed to converge at rate $1-\tau$ on~\eqref{eq:quad_problem}, then the convergence rate on a nonlinear functions can be arbitrary close to $1-\tau$ when $x_0$ is sufficiently close to $x_*$.

\section{Experiments}\label{sec:experiments}

\begin{figure*}
    \centering
    \includegraphics[width=\linewidth]{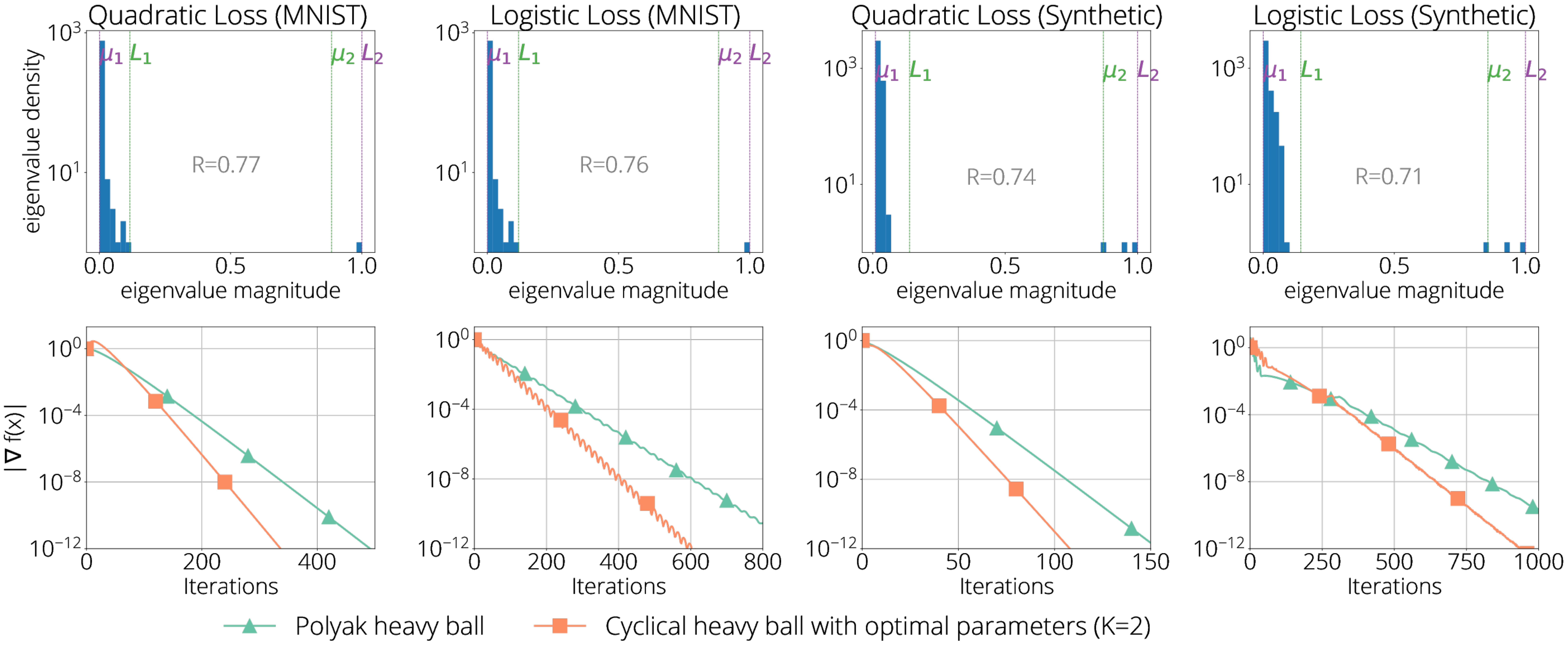}
    \caption{\emph{Hessian Eigenvalue histogram (top row) and Benchmarks (bottom row)}. The {\bfseries top row} shows the Hessian eigenvalue histogram at optimum for the 4 considered problems, together with the interval boundaries $\muone < \Lone < \mutwo < \Ltwo$ for the two-interval split of the eigenvalue support described in Section \ref{sec:alternating_hb}. In all cases, there's a non-zero gap radius $R$. This is shown in the {\bfseries bottom row}, where we compare the suboptimality in terms of gradient norm as a function of the number of iterations. As predicted by the theory, the non-zero gap radius translates into a faster convergence of the cyclical approach, compared to PHB in all cases. The improvement is observed on both quadratic and logistic regression problems, even through the theory for the latter is limited to \emph{local} convergence. }
    \label{fig:benchmarks}
\end{figure*}
In this section we present an  empirical comparison of the cyclical heavy ball method for different length cycles across 4 different problems. We consider two different problems, quadratic and logistic regression, each applied on two datasets, the MNIST handwritten digits \citep{lecun2010mnist} and a synthetic dataset. The results of these experiments, together with a histogram of the Hessian's eigenvalues are presented in Figure \ref{fig:benchmarks} (see caption for a discussion).

\textbf{Dataset description.} The MNIST dataset consists of a data matrix $A$ with $60000$ images of handwritten digits each one with $28 \times 28 = 784$ pixels. The \emph{synthetic} dataset is generated according to a spiked covariance model \citep{johnstone2001distribution}, which has been shown to be an accurate model of covariance matrices arising for instance in spectral clustering \citep{couillet2016kernel} and deep networks \citep{pennington2017nonlinear, granziol2020explaining}.
In this model, the data matrix $A = X Z$ is generated from a $m \times n$ random Gaussian matrix $X$ and an $m \times m$ deterministic matrix $Z$. In our case, we take $n=1000, m=1200$ and $Z$ is the identity where the first three entries are multiplied by 100 (this will lead to three outlier eigenvalues). We also generate an $n$-dimensional target vector $b$ as $b = Ax$ or $b = \text{sign}(A x)$ for the quadratic and logistic problem respectively.

\textbf{Objective function} For each dataset, we consider a quadratic and a logistic regression problem, leading to 4 different problems. All problems are of the form $\min_{x \in \mathbb{R}^p} \tfrac{1}{n}\sum_{i=1}^n \ell(A_i^\top x, b_i) + \lambda \|x\|^2 $, where $\ell$ is a quadratic or logistic loss, $A$ is the data matrix and $b$ are the target values. We set the regularization parameter to $\lambda=10^{-3} \|A\|^2$. For logistic regression, since guarantees only hold at a neighborhood of the solution (even for the  1-cycle algorithm), we initialize the first iterate as the result of 100 iteration of gradient descent. In the case of logistic regression, the Hessian eigenvalues are computed at the optimum.

\section{Conclusion}\label{sec:conclusions}
This work is motivated by two recent observations from the optimization practice of machine learning. First, cyclical step-sizes have been shown to enjoy excellent empirical convergence~\citep{loshchilov2016sgdr,smith2017cyclical}. Second, \emph{spectral gaps} are pervasive in the Hessian spectrum of deep learning models~\citep{sagun2017empirical, papyan2018full, ghorbani2019investigation, papyan2019measurements}. Based on the simpler context of quadratic convex minimization, we develop a convergence-rate analysis and optimal parameters for the heavy ball method with cyclical step-sizes. This analysis highlights the regimes under which cyclical step-sizes have faster rates than classical accelerated methods. Finally, we illustrate these findings through numerical benchmarks.

\paragraph{Main Limitations.}
In \Cref{sec:alternating_hb} we gave explicit formulas for the optimal parameters in the case of the 2-cycle heavy ball algorithm. These formulas depend not only on extremal eigenvalues---as is usual for accelerated methods---but also on the spectral gap $R$. The gap can sometimes be estimated after computing the top eigenvalues (e.g. top-2 eigenvalue for MNIST). However, in general, there is no guarantee on how many eigenvalues are needed to estimate it and it must sometimes be seen as hyperparameter. Note \Cref{thm:rate_factor_alternating_hb} provides a convergence analysis also for non-optimal parameters, which would give accelerated convergence rates when doing a coarse grid-search over parameters as it is often done in empirical works.

Another limitation is the fact global convergence results rely heavily on the quadratic assumption which is quite different from our motivation, namely optimizing neural networks. Even if we provide local convergence guarantee in \Cref{sec:local-convergence}, we are not able to estimate the size of the optimum neighborhood for which \Cref{thm:local_convergence_non_quadratic} holds.

Another limitation regards long cycles.
For cycles longer than 2, we gave an implicit formula to set the optimal parameters (Proposition \ref{prop:sequence_stepsizes}). This involves solving a set of non-linear equations whose complexity increases with the cycle length. That being said, cyclical step-sizes might significantly enhance convergence speeds both in terms of worst-case rates and empirically, and this work advocates that new tuning practices involving different cycle lengths might be relevant.

\subsubsection*{Acknowledgements}
    We thank Courtney Paquette for very useful proofreading and feedback. The work of B. Goujaud and A. Dieuleveut is partially supported by ANR-19-CHIA-0002-01/chaire SCAI, and Hi!Paris. A. Taylor acknowledges support from the European Research Council (grant SEQUOIA 724063). This work was partly funded by the French government under management of Agence Nationale de la Recherche as part of the “Investissements d’avenir” program, reference ANR-19-P3IA-0001 (PRAIRIE 3IA Institute).

\bibliographystyle{plainnat}
\bibliography{references}

\begin{thebibliography}{31}
\providecommand{\natexlab}[1]{#1}
\providecommand{\url}[1]{\texttt{#1}}
\expandafter\ifx\csname urlstyle\endcsname\relax
  \providecommand{\doi}[1]{doi: #1}\else
  \providecommand{\doi}{doi: \begingroup \urlstyle{rm}\Url}\fi

\bibitem[Agarwal et~al.(2021)Agarwal, Goel, and Zhang]{agarwal2021acceleration}
Naman Agarwal, Surbhi Goel, and Cyril Zhang.
\newblock \href{https://arxiv.org/pdf/2103.01338.pdf}{Acceleration via Fractal
  Learning Rate Schedules}.
\newblock \emph{arXiv preprint arXiv:2103.01338}, 2021.

\bibitem[Bertsekas(1997)]{bertsekas1997nonlinear}
Dimitri~P. Bertsekas.
\newblock \href{http://www.athenasc.com/nonlinbook.html}{Nonlinear
  programming}.
\newblock \emph{Journal of the Operational Research Society}, 1997.

\bibitem[Chebyshev(1853)]{chebyshev1853theorie}
Pafnuty~Lvovich Chebyshev.
\newblock
  \emph{\href{https://books.google.ca/books?id=IOrnAAAAMAAJ&lpg=PA537&ots=qK0s3q3LTN&dq=T}{Théorie
  des mécanismes connus sous le nom de parallélogrammes}}.
\newblock Imprimerie de l'Académie impériale des sciences, 1853.

\bibitem[Couillet and Benaych-Georges(2016)]{couillet2016kernel}
Romain Couillet and Florent Benaych-Georges.
\newblock \href{https://arxiv.org/pdf/1510.03547.pdf}{Kernel spectral
  clustering of large dimensional data}.
\newblock \emph{Electronic Journal of Statistics}, 2016.

\bibitem[Fischer(2011)]{fischer2011polynomial}
Bernd Fischer.
\newblock \emph{\href{https://doi.org/10.1137/1.9781611971927}{Polynomial based
  iteration methods for symmetric linear systems}}.
\newblock SIAM, 2011.

\bibitem[Flanders and Shortley(1950)]{flanders1950numerical}
Donald~A. Flanders and George Shortley.
\newblock \href{https://doi.org/10.1063/1.1699598}{Numerical determination of
  fundamental modes}.
\newblock \emph{Journal of Applied Physics}, 1950.

\bibitem[Ghadimi et~al.(2015)Ghadimi, Feyzmahdavian, and
  Johansson]{ghadimi2015global}
Euhanna Ghadimi, Hamid~Reza Feyzmahdavian, and Mikael Johansson.
\newblock Global convergence of the heavy-ball method for convex optimization.
\newblock In \emph{2015 European control conference (ECC)}, pages 310--315.
  IEEE, 2015.

\bibitem[Ghorbani et~al.(2019)Ghorbani, Krishnan, and
  Xiao]{ghorbani2019investigation}
Behrooz Ghorbani, Shankar Krishnan, and Ying Xiao.
\newblock \href{https://arxiv.org/pdf/1901.10159.pdf}{An investigation into
  neural net optimization via hessian eigenvalue density}.
\newblock In \emph{International Conference on Machine Learning (ICML)}, 2019.

\bibitem[Goh(2017)]{distillblog}
Gabriel Goh.
\newblock \href{https://distill.pub/2017/momentum/}{Why Momentum Really Works},
  2017.
\newblock URL \url{http://distill.pub/2017/momentum/}.

\bibitem[Golub and Varga(1961)]{golub1961chebyshev}
Gene~H. Golub and Richard~S. Varga.
\newblock \href{http://eudml.org/doc/131485}{Chebyshev semi-iterative methods,
  successive overrelaxation iterative methods, and second order Richardson
  iterative methods}.
\newblock \emph{Numerische Mathematik}, 1961.

\bibitem[Granziol et~al.(2020)Granziol, Wan, Albanie, and
  Roberts]{granziol2020explaining}
Diego Granziol, Xingchen Wan, Samuel Albanie, and Stephen Roberts.
\newblock \href{https://arxiv.org/pdf/2011.08181.pdf}{Explaining the Adaptive
  Generalisation Gap}.
\newblock \emph{arXiv preprint arXiv:2011.08181}, 2020.

\bibitem[Johnstone(2001)]{johnstone2001distribution}
Iain~M. Johnstone.
\newblock \href{https://doi.org/10.1214/aos/1009210544}{On the distribution of
  the largest eigenvalue in principal components analysis}.
\newblock \emph{Annals of statistics}, 2001.

\bibitem[Le~Cun et~al.(2010)Le~Cun, Cortes, and Burges]{lecun2010mnist}
Yann Le~Cun, Corinna Cortes, and Chris Burges.
\newblock \href{http://yann.lecun.com/exdb/mnist}{{MNIST} handwritten digit
  database}.
\newblock \emph{ATT Labs [Online]}, 2010.

\bibitem[Lessard et~al.(2016)Lessard, Recht, and Packard]{lessard2016analysis}
Laurent Lessard, Benjamin Recht, and Andrew Packard.
\newblock \href{https://arxiv.org/pdf/1408.3595.pdf}{Analysis and design of
  optimization algorithms via integral quadratic constraints}.
\newblock \emph{SIAM Journal on Optimization}, 2016.

\bibitem[Loshchilov and Hutter(2017)]{loshchilov2016sgdr}
Ilya Loshchilov and Frank Hutter.
\newblock \href{https://arxiv.org/pdf/1608.03983.pdf}{{SGDR}: stochastic
  gradient descent with warm restarts}.
\newblock In \emph{International Conference on Learning Representations
  (ICLR)}, 2017.

\bibitem[Nemirovsky(1992)]{nemirovsky1992information}
Arkadi~S. Nemirovsky.
\newblock \href{https://doi.org/10.1016/0885-064X(92)90013-2}{Information-based
  complexity of linear operator equations}.
\newblock \emph{Journal of Complexity}, 1992.

\bibitem[Nemirovsky(1995)]{nemirovski1995information}
Arkadi~S. Nemirovsky.
\newblock
  \href{https://www2.isye.gatech.edu/~nemirovs/Lec_EMCO.pdf}{Information-based
  complexity of convex programming}.
\newblock \emph{Lecture Notes}, 1995.

\bibitem[Nesterov(2003)]{Nest03a}
Yurii Nesterov.
\newblock
  \emph{\href{https://link.springer.com/book/10.1007/978-1-4419-8853-9}{Introductory
  Lectures on Convex Optimization}}.
\newblock Springer, 2003.

\bibitem[Oymak(2021)]{oymak2021super}
Samet Oymak.
\newblock Super-convergence with an unstable learning rate.
\newblock \emph{arXiv preprint arXiv:2102.10734}, 2021.

\bibitem[Papyan(2018)]{papyan2018full}
Vardan Papyan.
\newblock \href{https://arxiv.org/pdf/1811.07062.pdf}{The full spectrum of
  deepnet hessians at scale: Dynamics with {SGD} training and sample size}.
\newblock \emph{arXiv preprint arXiv:1811.07062}, 2018.

\bibitem[Papyan(2019)]{papyan2019measurements}
Vardan Papyan.
\newblock \href{https://arxiv.org/pdf/1901.08244.pdf}{Measurements of
  Three-Level Hierarchical Structure in the Outliers in the Spectrum of Deepnet
  Hessians}.
\newblock In \emph{International Conference on Machine Learning (ICML)}, 2019.

\bibitem[Pedregosa(2020)]{fabian2020blog}
Fabian Pedregosa.
\newblock \href{http://fa.bianp.net/blog/2020/polyopt/}{On the Link Between
  Optimization and Polynomials, Part 1}, 2020.
\newblock URL \url{http://fa.bianp.net/blog/2020/polyopt/}.

\bibitem[Pedregosa(2021)]{fabian2021blog}
Fabian Pedregosa.
\newblock \href{http://fa.bianp.net/blog/2021/hitchhiker/}{On the Link Between
  Optimization and Polynomials, Part 3}, 2021.
\newblock URL \url{http://fa.bianp.net/blog/2021/hitchhiker/}.

\bibitem[Pennington and Worah(2017)]{pennington2017nonlinear}
Jeffrey Pennington and Pratik Worah.
\newblock
  \href{https://papers.nips.cc/paper/2017/file/0f3d014eead934bbdbacb62a01dc4831-Paper.pdf}{Nonlinear
  random matrix theory for deep learning}.
\newblock In \emph{Advances on Neural Information Processing Systems (NIPS)},
  2017.

\bibitem[Polyak(1964)]{polyak1964some}
Boris~T. Polyak.
\newblock
  \href{https://www.sciencedirect.com/science/article/abs/pii/0041555364901375}{Some
  methods of speeding up the convergence of iteration methods}.
\newblock \emph{{USSR} computational mathematics and mathematical physics},
  1964.

\bibitem[Rutishauser(1959)]{rutishauser1959theory}
Heinz Rutishauser.
\newblock \href{https://doi.org/10.1007/978-3-0348-7224-9_2}{Theory of gradient
  methods}.
\newblock In \emph{Refined iterative methods for computation of the solution
  and the eigenvalues of self-adjoint boundary value problems}. Springer, 1959.

\bibitem[Sagun et~al.(2017)Sagun, Evci, Guney, Dauphin, and
  Bottou]{sagun2017empirical}
Levent Sagun, Utku Evci, V.~Ugur Guney, Yann Dauphin, and Leon Bottou.
\newblock \href{https://arxiv.org/pdf/1706.04454.pdf}{Empirical analysis of the
  {H}essian of over-parametrized neural networks}.
\newblock \emph{arXiv preprint arXiv:1706.04454}, 2017.

\bibitem[Scieur and Pedregosa(2020)]{scieur2020universal}
Damien Scieur and Fabian Pedregosa.
\newblock \href{https://arxiv.org/pdf/2002.04664.pdf}{Universal Asymptotic
  Optimality of {P}olyak Momentum}.
\newblock In \emph{International Conference on Machine Learning (ICML)}, 2020.

\bibitem[Smith(2017)]{smith2017cyclical}
Leslie~N. Smith.
\newblock \href{https://arxiv.org/pdf/1506.01186.pdf}{Cyclical learning rates
  for training neural networks}.
\newblock In \emph{2017 IEEE Winter Conference on Applications of Computer
  Vision (WACV)}. IEEE, 2017.

\bibitem[Sutskever et~al.(2013)Sutskever, Martens, Dahl, and
  Hinton]{sutskever2013importance}
Ilya Sutskever, James Martens, George Dahl, and Geoffrey Hinton.
\newblock \href{http://proceedings.mlr.press/v28/sutskever13.html}{On the
  importance of initialization and momentum in deep learning}.
\newblock In \emph{International Conference on Machine Learning (ICML)}, 2013.

\bibitem[Young(1953)]{young1953richardson}
David Young.
\newblock On richardson's method for solving linear systems with positive
  definite matrices.
\newblock \emph{Journal of Mathematics and Physics}, 1953.
\newblock URL \url{https://doi.org/10.1002/sapm1953321243}.

\end{thebibliography}
\clearpage
\appendix
\onecolumn
\aistatstitle{Super-Acceleration with Cyclical Step-sizes: \\
Supplementary Materials}

\section*{Organization of the appendix}

    The appendix contains all proofs that were not presented in the main core of the paper. We also detail all examples, and provide some complementary elements.
    
    \Cref{apx:link-between-first-order-algorithms-and-polynomials} details the existing link between first order methods and family of ``residual polynomials''.
    This term refers in all the appendix to the polynomials which value in 0 is 1.
    
    In \Cref{apx:optimal-methods-for-strongly-convex-smooth-quadratic-objective}, we recall some well known optimal methods for $L$-smooth $\mu$-strongly convex quadratic minimization (i.e., when the spectrum is contained in a single interval $\Lambda=[\mu,L]$). Its purpose is exclusively to recall well-known foundation of optimization that are those algorithms and their construction.
    
    In \Cref{apx:minimax-polynomials-and-equioscillation-property}, we recall the polynomial formulation of the optimal method design problem, as well as a fundamental property, called ``equioscillation'', to characterize the solution of this problem.
    
    In Appendix~\ref{apx:cycling}, we provide all proofs related to cyclic step-sizes. In particular,
    \begin{itemize}
        \item In \Cref{apx:derivation-of-optimal-algorithm-withstep-sizes}, we derive the optimal algorithm in a case where $\Lambda$ is the union of 2 intervals of the same size (See~\eqref{eq:def_lambda}). This leads to the use of alternating step-sizes. The resulting algorithm has a stationary form which is Algorithm~\ref{algo:cyclical_heavy_ball}.
        \item Therefore, in~\Cref{apx:derivation-of-heavy-ball-withstep-sizes-cycle}, we study the heavy ball with cycling step-sizes (Algorithm~\ref{algo:cyclical_heavy_ball}).
        \item In \Cref{apx:case-of-a-2-step-sizes-cycle} and \Cref{apx:case-of-a-3-step-sizes-cycle}, we use our results to design methods with cycles of lengths $K=2$ and $K=3$. For those cases, we provide a more elegant formulation of the results.
    \end{itemize}
    
    In \Cref{apx:beyond-quadratic-objective:-local-convergence-of-cycling-methods}, we provide a proof of Theorem~\ref{thm:local_convergence_non_quadratic} (local behavior beyond quadratics) and in~\Cref{apx:experiments}, we provide some information about the code we used for the experiments in quadratic and non quadratic settings.
    
    Finally, in~\Cref{apx:comparison_with_Oymak} we discuss similarities and differences with \citet{oymak2021super}.
    
	\hypersetup{linkcolor = black}
	\setlength\cftparskip{2pt}
	\setlength\cftbeforesecskip{2pt}
	\setlength\cftaftertoctitleskip{3pt}
	\addtocontents{toc}{\protect\setcounter{tocdepth}{2}}
	\setcounter{tocdepth}{1}
	\tableofcontents
	\hypersetup{linkcolor=blue}

\section{Relationship between first order methods and polynomials}\label{apx:link-between-first-order-algorithms-and-polynomials}
    In this section we prove some results on the relationship between polynomials and first order methods for quadratic minimization, which is the starting point for our theoretical framework. This relationship is classical and was exploited  by \citet{rutishauser1959theory, nemirovsky1992information,nemirovski1995information}), to name a few.
    The following proposition makes this relationship precise:
    \linkalgopoly*
    
    \begin{proof}
    We successively prove both directions of the equivalence.
    
    ($\Longrightarrow$) \textit{Given a first order method, we can find a sequence of polynomials $(P_t)_{t\in\mathbb{N}}$ such that, for a given quadratic function $f$ of Hessian $H$ and a given starting point $x_0$, the iterates $x_t$ verify 
    \[
        x_t - x_* = P_t(H)(x_0 - x_*).
    \] 
    Moreover, The polynomials sequence $(P_t)_{t\in\mathbb{N}}$ verifies the relations 
    \[
        \mathrm{deg}(P_{t+1}) \leq \underset{k\leq t}{\max}\mathrm{~deg}(P_k) + 1 \quad \text{and} \quad P_t(0)=1.
    \]
    }
    
    We proceed by induction:
    
    \paragraph{Initial case.} Let $t=0$. Then for any first order method we have the trivial relationship 
    \[ 
        x_0 - x_* = P_0(H)(x_0 - x_*)  \quad \text{with} \quad P_0 = 1.
    \] 
    This proves the implication for $t=0$, as $P_0$ is a degree 0 polynomial satisfying $P_0(0)=1$.

    \paragraph{Recursion.} Let $t\in\mathbb{N}$. We assume the following statement true, 
    \[
        \forall\; k \leq t, \quad x_k - x_* = P_k(H)(x_0 - x_*) \quad \text{with} \quad P_k(0)=1.
    \]
    We now prove this statement is also true for $t+1$. Since $x_{t+1} \in x_0 + \text{span}\{ \nabla f(x_0),\ldots, \nabla f(x_t) \}$, there exists a family $(\gamma_{t+1, k})_{k\in\llbracket 0, t\rrbracket}$ such that
    \begin{equation}
        x_{t+1} = x_0 - \gamma_{t+1, 0} \nabla f(x_0) - \cdots - \gamma_{t+1, t} \nabla f(x_t).
    \end{equation}
    Then, by the induction hypothesis we have:
    \begin{align*}
        x_{t+1} - x_* &~ = x_0 - x_* - \gamma_{t+1, 0} H (x_0 - x_*) - \cdots - \gamma_{t+1, t} H (x_t - x_*) \\
        &~ = x_0 - x_* - \gamma_{t+1, 0} H P_0(H) (x_0 - x_*) - \cdots - \gamma_{t+1, t} H P_t(H) (x_0 - x_*) \\
        &~ \triangleq P_{t+1}(H) (x_0 - x_*)\,.
    \end{align*}
    We observe that the latest polynomial has a degree at most 1 plus the highest degree of $(P_k)_{k \leq t}$ and that $P_{t+1}(0) = 1$ (since $P_{t+1}$ is defined as 1 plus some polynomial multiple of the polynomial $X$), which concludes the proof.

    ($\Longleftarrow$): \textit{From a family of polynomials $(P_t)_{t\in\mathbb{N}}$, with
    \begin{equation} \label{eq:sequence_poly_degree_app}
        \mathrm{deg}(P_{t+1}) \leq \underset{k\leq t}{\max}\mathrm{~deg}(P_k) + 1 \quad \text{and} \quad P_t(0)=1,
    \end{equation}
    we can obtain a first order method such that, for any quadratic $f$ (and its Hessian $H$) and any starting point $x_0$, we verify
    \[
        \forall t\in\mathbb{N}, x_t - x_* = P_t(H)(x_0-x_*).
    \]
    }
    
    Let the sequence $(P_t)_{t\in\mathbb{N}}$ verifies \eqref{eq:sequence_poly_degree_app} for all $t\in\mathbb{N}$. Let
    \[ 
        d=\underset{t'\leq t}{\max}\,\mathrm{deg}(P_{t'}).
    \]
    
    A gap in the sequence of degrees would stand in contradiction with our assumptions.
    
    Since, there is no gap in degree, for any $d' \leq d$ there exists $t' \leq t$ such that $\mathrm{deg}(P_{t'}) = d'$, and therefore  $\mathrm{Span}((P_{k})_{k \leq t}) = \mathbb{R}_d[X]$.
    
    Moreover, we know $P_{t+1}$ has a degree at most $d+1$ and $P_{t+1}(0)=1$, so $\frac{1 - P_{t+1}(X)}{X} \in \mathbb{R}_d[X]$.
    
    This proves the existence of $(\gamma_{t+1, k})_{k\in\llbracket 0, t\rrbracket}$ such that 
    \begin{equation}
        \frac{1 - P_{t+1}(X)}{X} = \gamma_{t+1, 0} P_0(X) + \cdots + \gamma_{t+1, t} P_t(X).
    \end{equation}
    Then, defining
    \begin{equation}\label{eq:recursion_def}
        x_{t+1} = x_0 - \gamma_{t+1, 0} \nabla f(x_0) - \cdots - \gamma_{t+1, t} \nabla f(x_t)\,,
    \end{equation}
    we have
    \begin{align}
        x_{t+1} - x_* & ~ = x_0 - x_* - H \left(\gamma_{t+1, 0} (x_0-x_*) + \cdots + \gamma_{t+1, t} (x_t-x_*)\right) \\
        & ~ = \left(1 - X\left( \gamma_{t+1, 0} P_0(X) + \cdots + \gamma_{t+1, t} P_t(X)\right) \right)(H)(x_0 - x_*) \\
        & ~ = P_{t+1}(H)(x_0 - x_*)\,.
    \end{align}
    Defining $x_t$ for all $t$ according to \eqref{eq:recursion_def} gives an algorithm that has as associated residual polynomials $(P_t)_{t\in\mathbb{N}}$.
\end{proof}
    
    The above proposition can be used to obtain worst-case rates for first order methods by bounding their associated polynomials. Indeed, using the Cauchy-Schwartz inequality in \eqref{eq:link_polynomial} leads to
    \begin{equation}
        \|x_t-x_*\| \leq \sup_{\lambda \in \Lambda} |P_t(\lambda)|\; \|x_0-x_*\| \quad \Longrightarrow \quad r_t = \sup_{\lambda \in \Lambda} |P_t(\lambda)|, \quad \text{where } P(0)=1\,.
    \end{equation}
    Therefore, finding the algorithm with the fastest worst-case rate can be equivalently framed as the problem of finding the residual polynomial with smallest value on the eigenvalue support $\Lambda$.

    Then, finding the fastest algorithm is equivalent of finding, for each $t \geq 0$, the polynomial of degree $t$ that reaches the smallest infinite norm on the set $\Lambda$.
    Therefore we introduce the notion of \textit{minimax polynomial} (Definition \ref{def:optimally_small_polynomial}) over a set $\Lambda$ as the one that reaches the smallest maximal value over $\Lambda$ among a set of polynomial of fixed degree and $P(0)=1$.
    \begin{Def}[Minimax polynomial of degree $t$ over $\Lambda$] \label{def:optimally_small_polynomial}
        For any, $t\geq 0$, and any relatively compact (i.e. bounded) set $\Lambda\subset\mathbb{R}$, the \textit{minimax polynomial of degree $t$ over $\Lambda$}, written  $Z_{t}^{\Lambda}$, is defined as
        \begin{equation}
             Z_{t}^{\Lambda} \triangleq \underset{P \in \mathbb{R}_t[X]}{\mathrm{argmin~}} \;\; \underset{\lambda \in \Lambda}{\sup} |P(\lambda)|, \quad \mathrm{subject~to~} \;\;P(0) = 1\,.
            \label{eq:optimally_small_polynomial}
        \end{equation}
    \end{Def}
    
\section{Optimal methods for strongly convex and smooth quadratic objective 
}\label{apx:optimal-methods-for-strongly-convex-smooth-quadratic-objective}

In this section, for sake of completness, we revisit some classical methods, described in e.g. \citep{polyak1964some, distillblog, fabian2020blog, fabian2021blog}, that are optimal when the Hessian eigenvalues are contained in a single interval of the form $\Lambda=[\mu,L]$. To make this setup explicit, we will denote the optimal polynomials $\sigma_1^{\Lambda}$ and $Z_t^{\Lambda}$ (respectively defined in~\Cref{eq:optimal_sigma} and \Cref{eq:optimally_small_polynomial}) by $\sigma_1^{[\mu,L]}$, and $Z_t^{[\mu,L]}$.
    
    As mentioned in Example~\ref{ex:lambda_interval},  the minimax polynomial $Z_t^{[\mu, L]}$ is 
    \[
        Z_t^{[\mu, L]}(\lambda) = \frac{T_t(\sigma_1^{[\mu,L]}(\lambda))}{T_t(\sigma_1^{[\mu,L]}(0))},
    \] 
    where $T_t$ denotes the $t^{th}$ Chebyshev polynomial (See e.g. \citet{chebyshev1853theorie}) and $\sigma_1^{[\mu,L]}$ the affine function $\sigma(\lambda) \triangleq \frac{L+\mu}{L-\mu} - \frac{2}{L-\mu}\lambda$ that maps $[\mu, L]$ onto $[-1, 1]$. This can be seen a consequence of the more general \textit{equioscillation} discussed in Appendix \ref{apx:minimax-polynomials-and-equioscillation-property}. The next section presents one method which has $Z_t^{[\mu, L]}$ as associated residual polynomial. This method is known as the Chebyshev semi-iterative method.

    \subsection{Chebyshev semi-iterative method}\label{subsec:chebyshev-method}

        The algorithm follows the three terms pattern from \Cref{eq:optimal_poly_smooth_strong_convex} to iteratively form $Z_1^{\Lambda}, \ldots,Z_t^{\Lambda}$.
    
        \begin{algorithm}[H]
            \caption{Chebyshev semi-iterative method \citep{golub1961chebyshev}} \label{algo:cheby_method}
            \SetAlgoLined
            \textbf{Input:} $x_0$ \\
            \textbf{Initialize:} $\omega_0 = 2$
            ~\\
            $x_{1} = x_0 - \frac{2}{L+\mu} \nabla f(x_0)$\;
            ~\\
            \For{$t = 1, \dots$}{
                $\omega_{t+1} = \left(1-\frac{1}{4}\big(\frac{1-\kappa}{1+\kappa}\big)^2\omega_t\right)^{-1}$ \;
                $x_{t+1} = x_t - \frac{2}{L+\mu} \omega_t  \nabla f(x_t) + (\omega_t-1) (x_t - x_{t-1}) $ \;
            }
        \end{algorithm}
        
        \begin{Th}
            The iterates produced by the Chebyshev semi-iterative method verify
            \begin{equation} \label{eq:cheby_polynomial_app}
                x_t - x_* = \frac{T_t(\sigma_1^{[\mu,L]}(H))}{T_t(\sigma_1^{[\mu,L]}(0))} (x_0 - x_*) \quad\text{for all} \;\;t\in\mathbb{N}.
            \end{equation}
                
            Furthermore, this method enjoys a worst-case rate of the form
            \begin{equation}
            \|x_t - x_*\| \leq \frac{1}{T_t(\sigma_1^{[\mu,L]}(0))} \|x_0 - x_*\| = O\left( \left( \frac{1 - \sqrt{\kappa}}{1 + \sqrt{\kappa}} \right)^t \right)\,.
            \end{equation}
        \end{Th}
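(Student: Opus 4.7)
The plan is to establish the polynomial identity \eqref{eq:cheby_polynomial_app} by induction on $t$, then extract the rate bound using standard spectral estimates on Chebyshev polynomials outside $[-1,1]$. Throughout, denote $y_0 \triangleq \sigma_1^{[\mu,L]}(0) = (L+\mu)/(L-\mu)$ and $P_t(\lambda) \triangleq T_t(\sigma_1^{[\mu,L]}(\lambda))/T_t(y_0)$, so the goal is to show $x_t - x_* = P_t(H)(x_0 - x_*)$.

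First I would reconcile the scalar sequence $\omega_t$ in Algorithm~\ref{algo:cheby_method} with the Chebyshev values at $y_0$. Specifically, I would guess and verify that $\omega_t = 2 y_0 T_t(y_0)/T_{t+1}(y_0)$ is the correct closed form, checking (i) the initial condition $\omega_0 = 2 y_0 T_0(y_0)/T_1(y_0) = 2$, and (ii) the recursion in the algorithm, using the Chebyshev three-term recurrence $T_{t+2}(y_0) = 2 y_0 T_{t+1}(y_0) - T_t(y_0)$ together with the identification $\tfrac{1}{4}\bigl(\tfrac{1-\kappa}{1+\kappa}\bigr)^2 = \tfrac{1}{4 y_0^2}$.

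Next, I would derive the three-term recurrence satisfied by $P_t$ itself from the Chebyshev recurrence applied to $T_{t+1}(\sigma_1^{[\mu,L]}(\lambda))$. Dividing through by $T_{t+1}(y_0)$ and using the identity $T_{t-1}(y_0)/T_{t+1}(y_0) = \omega_t - 1$ (a direct consequence of the recurrence and the definition of $\omega_t$), plus the elementary simplification $\sigma_1^{[\mu,L]}(\lambda)/y_0 = 1 - 2\lambda/(L+\mu)$, I would arrive at
\begin{equation*}
    P_{t+1}(\lambda) = P_t(\lambda) - \tfrac{2 \omega_t}{L+\mu}\,\lambda\, P_t(\lambda) + (\omega_t - 1)\bigl(P_t(\lambda) - P_{t-1}(\lambda)\bigr).
\end{equation*}
Applying this to $H$ and to $x_0 - x_*$, and using $\nabla f(x_t) = H(x_t - x_*)$, produces exactly the update rule of Algorithm~\ref{algo:cheby_method}. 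The induction is then straightforward: the base cases $t=0$ ($P_0 = 1$) and $t=1$ (check that $T_1(\sigma_1^{[\mu,L]}(\lambda))/y_0 = 1 - 2\lambda/(L+\mu)$, which matches the initialization $x_1 = x_0 - \tfrac{2}{L+\mu}\nabla f(x_0)$) together with the recurrence above yield \eqref{eq:cheby_polynomial_app} for all $t$.

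For the rate, I would combine \eqref{eq:cheby_polynomial_app} with the spectral bound $\|P_t(H)\| \leq \sup_{\lambda \in [\mu,L]} |P_t(\lambda)|$ valid for symmetric $H$ with $\Sp(H) \subseteq [\mu,L]$. Since $\sigma_1^{[\mu,L]}$ maps $[\mu,L]$ onto $[-1,1]$ and $|T_t|\leq 1$ on $[-1,1]$, the numerator is bounded by $1$, which gives the first inequality. The asymptotic $O((1-\sqrt{\kappa})/(1+\sqrt{\kappa}))^t$ follows from the standard closed form $T_t(y) = \tfrac{1}{2}\bigl((y+\sqrt{y^2-1})^t + (y-\sqrt{y^2-1})^t\bigr)$ for $y>1$, applied to $y_0$: a short calculation shows $y_0 + \sqrt{y_0^2 - 1} = (1+\sqrt{\kappa})/(1-\sqrt{\kappa})$, so $T_t(y_0)^{-1}$ decays as claimed. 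No step is a genuine obstacle, but the bookkeeping around matching the scalar recursion for $\omega_t$ to the Chebyshev ratio $T_t(y_0)/T_{t+1}(y_0)$ is the one place where a careful computation is needed; everything else is either the Chebyshev recurrence or the standard spectral argument.
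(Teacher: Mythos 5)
Your proof is correct and follows essentially the same route as the paper: both rely on the closed form $\omega_t = 2\sigma_1^{[\mu,L]}(0)\,T_t(\sigma_1^{[\mu,L]}(0))/T_{t+1}(\sigma_1^{[\mu,L]}(0))$, the identity $T_{t-1}/T_{t+1} = \omega_t - 1$, the Chebyshev three-term recurrence, and the same spectral and Chebyshev-growth estimates for the rate. The only cosmetic difference is direction of presentation---the paper starts from the polynomial identity and reads off the algorithm update, while you start from the algorithm and verify the identity by induction---but the computations are identical.
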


        \begin{proof}
            Consider first an algorithm whose iterates verify \eqref{eq:cheby_polynomial_app}. Then using the Cauchy-Schwartz inequality and known bounds of Chebyshev polynomials, we can show the following rate
            \begin{align*}
                \|x_t - x_*\| & \leq \frac{\underset{\lambda \in [\mu, L]}{\sup} |T_t(\sigma_1^{[\mu,L]}(\lambda))|}{T_t(\sigma_1^{[\mu,L]}(0))} \|x_0 - x_*\| \\
                & = \frac{1}{T_t\left(\frac{1+\kappa}{1-\kappa}\right)} \|x_0 - x_*\| & \mathrm{since} \, \underset{x \in [-1, 1]}{\sup} |T_t(x)| = 1\\
                & \leq 2 \left(\frac{1-\sqrt{\kappa}}{1+\sqrt{\kappa}}\right)^t \|x_0 - x_*\| & \mathrm{since} \, T_t(x) \geq \frac{\left(x + \sqrt{x^2-1}\right)^t}{2}, \forall x\notin (-1, 1) \,.
            \end{align*}
            
            It remains to prove that \Cref{algo:cheby_method} is the one that achieves the property \eqref{eq:cheby_polynomial_app}.
            Using the recursion verified by Chebyshev polynomials
            \begin{equation}\label{eq:cheby_polynomials}
                T_{t+1}(x) = 2xT_{t}(x) - T_{t-1}(x),
            \end{equation}
            we have
            \begin{align*}
                x_{t+1} - x_* & = \frac{T_{t+1}(\sigma_1^{[\mu,L]}(H))}{T_{t+1}(\sigma_1^{[\mu,L]}(0))} (x_0 - x_*) \\
                & = \frac{2\sigma_1^{[\mu,L]}(H)T_{t}(\sigma_1^{[\mu,L]}(H))(x_0 - x_*) - T_{t-1}(\sigma_1^{[\mu,L]}(H))(x_0 - x_*)}{T_{t+1}(\sigma_1^{[\mu,L]}(0))} \\
                & = \frac{2\sigma_1^{[\mu,L]}(H)T_{t}(\sigma_1^{[\mu,L]}(0))}{T_{t+1}(\sigma_1^{[\mu,L]}(0))}(x_t - x_*) - \frac{T_{t-1}(\sigma_1^{[\mu,L]}(0))}{T_{t+1}(\sigma_1^{[\mu,L]}(0))}(x_{t-1} - x_*) \\
                & = \frac{2\sigma_1^{[\mu,L]}(0)T_{t}(\sigma_1^{[\mu,L]}(0))}{T_{t+1}(\sigma_1^{[\mu,L]}(0))}\left(I - \frac{2}{L+\mu}H\right)(x_t - x_*) - \frac{T_{t-1}(\sigma_1^{[\mu,L]}(0))}{T_{t+1}(\sigma_1^{[\mu,L]}(0))}(x_{t-1} - x_*)\,.
            \end{align*}
            Let's introduce $\omega_t \triangleq \frac{2\sigma_1^{[\mu,L]}(0)T_{t}(\sigma_1^{[\mu,L]}(0))}{T_{t+1}(\sigma_1^{[\mu,L]}(0))}$.
            Then $\omega_0 = 2$ and by Chebyshev recursion (\Cref{eq:cheby_polynomials}), $\omega_t - 1 = \frac{T_{t-1}(\sigma_1^{[\mu,L]}(0))}{T_{t+1}(\sigma_1^{[\mu,L]}(0))}$.
            With this notation we can write the above identity more compactly as
            \begin{align*}
                x_{t+1} - x_* & = \omega_t \left(I - \frac{2}{L+\mu}H\right)(x_t - x_*) - (\omega_t - 1) (x_{t-1} - x_*) \\
                & = x_t - \frac{2}{L+\mu} \omega_t  \nabla f(x_t) + (\omega_t-1) (x_t - x_{t-1})\,.
            \end{align*}
            
            It remains to find a recursion on $\omega_t$ to make its use tractable.
            Using one more time the Chebyshev recursion \Cref{eq:cheby_polynomials},
            \begin{align*}
                \omega_{t}^{-1} & = \frac{T_{t+1}(\sigma_1^{[\mu,L]}(0))}{2\sigma_1^{[\mu,L]}(0)T_{t}(\sigma_1^{[\mu,L]}(0))} \\
                & = \frac{2\sigma_1^{[\mu,L]}(0)T_{t}(\sigma_1^{[\mu,L]}(0)) - T_{t-1}(\sigma_1^{[\mu,L]}(0))}{2\sigma_1^{[\mu,L]}(0)T_{t}(\sigma_1^{[\mu,L]}(0))} \\
                & = 1 - \frac{1}{4\sigma_1^{[\mu,L]}(0)^2} \frac{2\sigma_1^{[\mu,L]}(0) T_{t-1}(\sigma_1^{[\mu,L]}(0))}{T_{t}(\sigma_1^{[\mu,L]}(0))} \\
                & = 1 - \frac{1}{4\sigma_1^{[\mu,L]}(0)^2}\omega_{t-1},
            \end{align*}
            which can finally be written as
            \begin{equation*}
                \omega_{t+1} = \frac{1}{1-\frac{1}{4}\left(\frac{1-\kappa}{1+\kappa}\right)^2\omega_t},
            \end{equation*}
            and we recognize the \textit{Chebyshev semi-iterative method} described in \Cref{algo:cheby_method}.
        \end{proof}
        
        This method, unlike the Polyak heavy ball (PHB) method, uses a different step-size and momentum at each iteration. However, both are related, as taking the limit of $\omega_t$ as $t\rightarrow\infty$ in Algorithm \ref{algo:cheby_method} we obtain $\omega_{\infty} = 1 + m$ with $m = \left(\frac{1 - \sqrt{\kappa}}{1+\sqrt{\kappa}}\right)^2$. This correspond to the parameters of PHB.
        
        We note that this is only one way to construct a method that has the Chebsyshev polynomial as residual polynomial at every iteration. However, it is possible to construct a different update that have the Chebyshev polynomial at fixed iteration, see for instance \citep{young1953richardson, agarwal2021acceleration} for one such alterative that does not require momentum.
        
        \subsection{Polyak heavy ball method}\label{subsec:Polyak-heavy-ball-method}
        
        \begin{algorithm}[H]
            \caption{Polyak Heavy ball}\label{alg:HB1}
            \SetAlgoLined
            \textbf{Input:} $x_0$ \\
            \textbf{Set:} $m = \left(\frac{1 - \sqrt{\kappa}}{1+\sqrt{\kappa}}\right)^2$ and $h = \frac{2(1+m)}{L+\mu}$.
            ~\\
            $x_1 = x_0 - \frac{h}{1+m}\nabla f(x_0)$ \\
            \For{$t = 1, \dots$}{
                $x_{t+1} = x_t - h\nabla f(x_t) + m  (x_t - x_{t-1})$
            }
        \end{algorithm}
        
        \begin{Th}
            The iterates of the heavy ball algorithm verify
            \[
                x_t - x_* = P_t(H)(x_0 - x_*) \quad  \text{for all} \;\; t\in\mathbb{N},
            \]
            with $P_t$ defined as
            \begin{equation}
                P_t(\lambda) \triangleq \left(\sqrt{m}\right)^t \left[\frac{2m}{1+m}T_t(\sigma_1^{[\mu,L]}(\lambda)) + \frac{1-m}{1+m}U_t(\sigma_1^{[\mu,L]}(\lambda))\right]\,.
            \end{equation}
            Furthermore, this method enjoys a worst-case rate of the form
            \begin{equation}
                \|x_t-x_*\| = O\left(t\left( \frac{\sqrt{\kappa} - 1}{\sqrt{\kappa} + 1} \right)^t\right).
            \end{equation}
        \end{Th}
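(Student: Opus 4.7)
The plan is to first derive a linear recurrence for the residual polynomial $P_t$, and then identify it as an explicit combination of Chebyshev polynomials of the first and second kinds. Starting from the update $x_{t+1} = x_t - h \nabla f(x_t) + m(x_t - x_{t-1})$ applied to a quadratic with Hessian $H$, I would use $\nabla f(x_t) = H(x_t - x_*)$ to obtain
\begin{equation*}
    x_{t+1} - x_* = ((1+m) I - h H)(x_t - x_*) - m(x_{t-1} - x_*).
\end{equation*}
By Proposition~\ref{prop:link_algo_poly}, this translates into the scalar recurrence
\begin{equation*}
    P_{t+1}(\lambda) = (1+m - h\lambda) P_t(\lambda) - m P_{t-1}(\lambda), \qquad P_0=1, \quad P_1(\lambda) = 1 - \tfrac{h}{1+m}\lambda.
\end{equation*}

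Next I would plug in the prescribed values $h = \tfrac{2(1+m)}{L+\mu}$ and $m = \big(\tfrac{1-\sqrt{\kappa}}{1+\sqrt{\kappa}}\big)^2$ and rewrite the coefficient in front of $P_t$ using $\sigma = \sigma_1^{[\mu,L]}(\lambda)$. A short algebraic check gives the key identity $1 + m - h\lambda = (1+m)\,\tfrac{L-\mu}{L+\mu}\,\sigma = 2\sqrt{m}\,\sigma$, where the last equality uses $\tfrac{2\sqrt{m}}{1+m} = \tfrac{1-\kappa}{1+\kappa} = \tfrac{L-\mu}{L+\mu}$, an elementary consequence of the choice of $m$. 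Hence the recurrence simplifies to $P_{t+1}(\lambda) = 2\sqrt{m}\,\sigma\,P_t(\lambda) - m\,P_{t-1}(\lambda)$. The substitution $P_t(\lambda) = (\sqrt{m})^t Q_t(\sigma)$ then reveals the standard Chebyshev three-term recursion $Q_{t+1}(\sigma) = 2\sigma Q_t(\sigma) - Q_{t-1}(\sigma)$.

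Because $\{T_t, U_t\}$ span the two-dimensional solution space of this recurrence, it suffices to match initial conditions. I would check $Q_0(\sigma)=1$ and $Q_1(\sigma) = \tfrac{2}{1+m}\sigma$ against the candidate $\tfrac{2m}{1+m} T_t(\sigma) + \tfrac{1-m}{1+m} U_t(\sigma)$ at $t=0,1$ (both matches are straightforward: the $t=0$ case is $\tfrac{2m+1-m}{1+m}=1$, and the $t=1$ case uses $T_1=\sigma$, $U_1=2\sigma$ to give $\tfrac{2m+2(1-m)}{1+m}\sigma = \tfrac{2\sigma}{1+m}$). Uniqueness of the recursion then yields the claimed closed form for $P_t$.

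For the worst-case rate, I would use~\eqref{eq:rate_convergence_polynomial} combined with the fact that $\sigma_1^{[\mu,L]}$ maps $[\mu, L]$ bijectively onto $[-1,1]$, together with the classical bounds $|T_t(\sigma)|\leq 1$ and $|U_t(\sigma)|\leq t+1$ on $[-1,1]$. This yields
\begin{equation*}
    \sup_{\lambda \in [\mu,L]} |P_t(\lambda)| \leq (\sqrt{m})^t\Big[\tfrac{2m}{1+m} + \tfrac{(1-m)(t+1)}{1+m}\Big] = O\big(t\,(\sqrt{m})^t\big),
\end{equation*}
and substituting $\sqrt{m} = \tfrac{1-\sqrt{\kappa}}{1+\sqrt{\kappa}}$ produces the stated $O\!\big(t\,\big(\tfrac{\sqrt{\kappa}-1}{\sqrt{\kappa}+1}\big)^t\big)$ bound. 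The only non-routine step is the algebraic identification that simplifies the three-term recurrence into Chebyshev form; the rest is verification of initial conditions and standard bounds on Chebyshev polynomials on $[-1,1]$.
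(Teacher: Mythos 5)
Your proof is correct and follows essentially the same route as the paper's: derive the three-term recurrence for $P_t$, normalize by $(\sqrt{m})^t$ to reveal the Chebyshev recursion $Q_{t+1}=2\sigma Q_t - Q_{t-1}$, match initial conditions against the stated $T_t/U_t$ combination, and conclude with the bounds $|T_t|\le 1$, $|U_t|\le t+1$ on $[-1,1]$. The algebraic identity $1+m-h\lambda = 2\sqrt{m}\,\sigma_1^{[\mu,L]}(\lambda)$ that you flag as the non-routine step is exactly the paper's pivot as well, so the two arguments are essentially identical.
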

        
        \begin{proof} 
            From the update defined in \Cref{alg:HB1}, we identify
            \begin{align*}
                P_0(\lambda) & = 1 \\
                P_1(\lambda) & = 1 - \frac{h}{1+m}\lambda \\
                P_{t+1}(\lambda) & = (1+m-h\lambda)P_t(\lambda) - m P_{t-1}(\lambda).
            \end{align*}
            Introducing $\Tilde{P}_t \triangleq \frac{P_t}{\left(\sqrt{m}\right)^t}$, we have
            \begin{align*}
                \Tilde{P}_0(\lambda) & = 1 \\
                \Tilde{P}_1(\lambda) & = \frac{1 + m - h\lambda}{(1+m)\sqrt{m}} = \frac{2}{1+m}\sigma_1^{[\mu,L]}(\lambda) \\
                \Tilde{P}_{t+1}(\lambda) & = \frac{(1+m-h\lambda)}{\sqrt{m}}\Tilde{P_t}(\lambda) - \Tilde{P}_{t-1}(\lambda) \\
                & = 2\sigma_1^{[\mu,L]}(\lambda)\Tilde{P}_t(\lambda) - \Tilde{P}_{t-1}(\lambda).
            \end{align*}
            This is a second order recurrence, with 2 initializations. It allows us to identify uniquely the family
            \begin{equation}
                \Tilde{P}_t(\lambda) = \frac{2m}{1+m}T_t(\sigma_1^{[\mu,L]}(\lambda)) + \frac{1-m}{1+m}U_t(\sigma_1^{[\mu,L]}(\lambda)).
            \end{equation}
            where $U_t$ denotes the Chebyshev polynomial of the second kind of degree $t$. While both $T_t$ and $U_t$ verify the same recursion as $\Tilde{P}_t$ and $T_0 = U_0 = \Tilde{P}_0 = 1$, the difference between $T$ and $U$ comes when $T_1(X) = X$ and $U_1(X) = 2X$. This is how $\Tilde{P}_t$ ends being a linear combination of the $T_t$ and $U_t$.
            Finally,
            \begin{equation}
                P_t(\lambda) = \left(\sqrt{m}\right)^t \left[\frac{2m}{1+m}T_t(\sigma_1^{[\mu,L]}(\lambda)) + \frac{1-m}{1+m}U_t(\sigma_1^{[\mu,L]}(\lambda))\right].
            \end{equation}
            Since by definition $\sigma_1^{[\mu,L]}([\mu, L]) = [-1, 1]$, $T_t(\sigma_1^{[\mu,L]}(\lambda)) \leq 1$ and $U_t(\sigma_1^{[\mu,L]}(\lambda)) \leq t+1, \forall t\in\mathbb{N}$.
            Hence, $\forall \lambda \in [\mu, L]$,
            \begin{equation}
                P_t(\lambda) \leq \left(\sqrt{m}\right)^t \left[1 + \frac{1-m}{1+m} t \right] \leq (2\sqrt{\kappa}t+1) \left(\frac{1 - \sqrt{\kappa}}{1+\sqrt{\kappa}}\right)^t
            \end{equation}
            and
            \begin{equation}
                \|x_t-x_*\| = O\left(t\left( \frac{\sqrt{\kappa} - 1}{\sqrt{\kappa} + 1} \right)^t\right).
            \end{equation}
        \end{proof}

\section{Minimax Polynomials and Equioscillation Property}\label{apx:minimax-polynomials-and-equioscillation-property}
    
    \Cref{apx:optimal-methods-for-strongly-convex-smooth-quadratic-objective} dealt with optimal methods when $\Lambda = [\mu, L]$.
    Those methods could be derived since the minimax polynomial (Definition \ref{def:optimally_small_polynomial}) $Z_t^{[\mu, L]}$ is known.
    
    In this section we consider the problem of finding minimax polynomials in a more general setting.
    We provide a characterization of the minimax polynomial defined in definition \ref{def:optimally_small_polynomial}.
    For the sake of simplicity, we actually focus on the polynomial $\sigma_t^{\Lambda}$ solution of \eqref{eq:optimal_sigma}.
    We can easily adapt the result to $Z_t^{\Lambda}$ leveraging Remark \ref{rem:relationship_btw_optimal_and_minimax}.
    We prove the following theorem.
    \begin{Th}
        \label{th:equioscillation}
        Let $P_K$ be a degree $K$ polynomial verifying 
        $P_K(\Lambda) \subset [-1, 1]$.
        Then $P_K$ is the unique solution $\sigma_K^{\Lambda}$ of \cref{eq:optimal_sigma} if and only if there exists a sorted family $(\lambda_i)_{i \in \llbracket 0, K \rrbracket} \in \left(\overline{\Lambda}\right)^{K+1}$ (where $\overline{\Lambda}$ is the closure of $\Lambda$) such that $\forall i \in \llbracket 0, K \rrbracket, P_K(\lambda_i) = (-1)^i$.
    \end{Th}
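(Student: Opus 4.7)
The plan is a Chebyshev-style equioscillation argument carried out in both directions of the equivalence, using small perturbations of $P_K$ as the main device.

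\textbf{Necessity.} Suppose $P_K$ is the unique maximizer in \eqref{eq:optimal_sigma}, and argue by contradiction. Assume the extremal set $\mathcal{E} = \{\lambda \in \overline{\Lambda} : |P_K(\lambda)| = 1\}$ decomposes into $m \leq K$ consecutive ``runs'' $\mathcal{E}_1 < \mathcal{E}_2 < \ldots < \mathcal{E}_m$, with $P_K \equiv \epsilon_i \in \{-1, +1\}$ on $\mathcal{E}_i$ and alternating signs $\epsilon_{i+1} = -\epsilon_i$. I would construct a perturbation $q \in \mathbb{R}_K[X]$ whose sign opposes $\epsilon_i$ on each $\mathcal{E}_i$ and satisfies $q(0) > 0$: place one real root strictly between each consecutive pair of runs (giving $m-1$ roots), and---only when the sign needed at $0$ conflicts with that required on $\mathcal{E}_1$---add one extra root in $(0, \min \mathcal{E}_1)$. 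Because $\Lambda \subseteq [\mu, L]$ with $\mu > 0$, the point $0$ lies strictly to the left of $\mathcal{E}$, so such placements are legitimate, and in either subcase $\deg q \leq m \leq K$. Using continuity near $\mathcal{E}$ and compactness on the complement of a small open neighborhood of $\mathcal{E}$ in $\Lambda$ (where $|P_K| < 1$ strictly), one gets $\|P_K + \varepsilon q\|_{\Lambda, \infty} \leq 1$ for all $\varepsilon > 0$ sufficiently small, while $(P_K + \varepsilon q)(0) > P_K(0)$, contradicting optimality. Hence the extremal set must contain at least $K+1$ sign-alternating points.

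\textbf{Sufficiency and uniqueness.} Conversely, assume $P_K$ equioscillates at $\lambda_0 < \ldots < \lambda_K$ in $\overline{\Lambda}$ with $P_K(\lambda_i) = (-1)^i$, and let $Q \in \mathbb{R}_K[X]$ be any competitor with $\sup_\Lambda |Q| \leq 1$ and $Q(0) \geq P_K(0)$. Set $D = P_K - Q \in \mathbb{R}_K[X]$; from $|Q(\lambda_i)| \leq 1$ we get $(-1)^i D(\lambda_i) \geq 0$ for each $i$, so the values of $D$ at the $\lambda_i$ form a weakly alternating sequence. A standard multiplicity-counting argument (each strict sign change contributes one zero in the open interval in between; each interior $\lambda_i$ where $D$ vanishes contributes an additional root) then yields at least $K$ zeros of $D$ in $[\lambda_0, \lambda_K] \subset (0, +\infty)$ counted with multiplicity. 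Since $\deg D \leq K$, either $D \equiv 0$ (so $Q = P_K$), or $D(\lambda) = c \prod_{j=1}^K (\lambda - r_j)$ with all $r_j \in [\lambda_0, \lambda_K] \subset (0, +\infty)$. In the latter case the sign of $c$ is pinned by $D(\lambda_0) \geq 0$, and a direct evaluation $D(0) = c \prod_j (-r_j)$ combined with a short sign chase gives $D(0) > 0$, i.e., $P_K(0) > Q(0)$, contradicting $Q(0) \geq P_K(0)$. Thus $D \equiv 0$, establishing both optimality and uniqueness of $P_K$.

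\textbf{Main obstacle.} The delicate step is the multiplicity bookkeeping in the sufficiency direction when $D$ vanishes at one or more of the interior equioscillation points $\lambda_i$: one must argue that such zeros contribute with the correct (possibly double) multiplicity so the total still reaches $K$ on $[\lambda_0, \lambda_K]$, and the sign chase at $0$ then has to be carried through while allowing roots of $D$ to coincide with some $\lambda_i$. In the necessity direction, the only mildly subtle design decision is whether to include the extra root in $(0, \min \mathcal{E}_1)$; this is dictated by $\epsilon_1$ alone and keeps $\deg q \leq m \leq K$ in either subcase.
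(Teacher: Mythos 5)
Your proposal is correct in both directions, and it tracks the paper's argument for the necessity part, but takes a genuinely different route for sufficiency and uniqueness.

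\textbf{Necessity.} Your construction is the same in spirit as the paper's: a degree-$\leq m \leq K$ perturbation $q$ whose sign opposes that of $P_K$ on each extremal run and with $q(0)>0$, with roots placed strictly between consecutive runs. The paper always inserts an additional root $r_0\in(0,\inf\Lambda)$, implicitly indexing the first run at $+1$; you insert it only when needed. Both are fine, and the degree budget ($\leq m \leq K$) works out identically.

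\textbf{Sufficiency and uniqueness.} Here you diverge. The paper expands the competitor in the Lagrange basis on the $K{+}1$ equioscillation nodes, $Q(0)=\sum_i v_i L_{\lambda_i}(0)$ with $v_i=Q(\lambda_i)$, $|v_i|\le 1$, and uses that $L_{\lambda_i}(0)=\prod_{j\neq i}\lambda_j/(\lambda_j-\lambda_i)$ is nonzero with sign $(-1)^i$ (because all $\lambda_j>0$). This gives $Q(0)\le P_K(0)$ in one line, and equality forces $v_i=(-1)^i$ for all $i$, hence $Q=P_K$, so optimality and uniqueness come together with no casework. Your route is the classical alternation-theorem argument: count zeros of $D=P_K-Q$ forced by the weak sign alternation $(-1)^iD(\lambda_i)\ge0$, conclude $D\equiv 0$ or $D$ has degree exactly $K$ with all $K$ roots in $[\lambda_0,\lambda_K]\subset(0,\infty)$, and then run a sign chase at $0$. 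The approach is valid, but two steps that you flag as the \emph{main obstacle} are indeed nontrivial and are left unresolved. First, the ``at least $K$ zeros, with multiplicity'' count needs the argument that an interior node $\lambda_i$ with $D(\lambda_i)=0$ cannot be double-counted between adjacent intervals; the standard fix is to note that a simple zero at $\lambda_i$ forces an extra sign change in one of the neighboring open intervals, while a zero of even multiplicity already counts twice. Second, the assertion that ``the sign of $c$ is pinned by $D(\lambda_0)\ge 0$'' is vacuous when $D(\lambda_0)=0$ (i.e., when $\lambda_0$ is one of the roots); a correct version looks at the smallest index $s$ with $D(\lambda_s)\neq 0$, shows by the multiplicity budget that $\lambda_0,\dots,\lambda_{s-1}$ are simple roots and that exactly $K-s$ roots lie in $[\lambda_s,\lambda_K]$, and then deduces $\mathrm{sign}(D(0))=(-1)^{s-p}=+1$ with $p=s$ the number of roots left of $\lambda_s$. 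None of this breaks your proof, but it is considerably more bookkeeping than you acknowledge, and the paper's Lagrange decomposition (which also hands you uniqueness for free, since the maximizing $v_i$ are uniquely determined) avoids it entirely. Note that both arguments lean on $\overline\Lambda\subset(0,\infty)$: for the paper it guarantees $L_{\lambda_i}(0)\neq 0$ with alternating signs, and for you it keeps all roots of $D$ strictly right of $0$.
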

    
    The following proof is technical and requires to introduce several new notations. Hence we first briefly describe the intuition before giving the actual complete proof.
    
    ($\Longleftarrow$): Assume $P_K$ ``oscillates''  $K+1$ times between $1$ and $-1$. Since $P_K$ has a degree $K$, it is completely determined by its values on those $K+1$ points, using the Lagrange interpolation representation. We prove that $P_K$ is optimal because any other polynomial $Q_K$, having different  values on those $K+1$ points would achieve a smaller value $Q_K(0)$ at 0.
    
    ($\Longrightarrow$): We prove this by contradiction. We assume that $P_K$ doesn't oscillate $K+1$ times between 1 and $-1$, and  prove that $P_K(0)$ is not optimal. To do so, we build a small perturbation $\varepsilon Q_K$ such that $P_K + \varepsilon Q_K$ is a polynomial of degree $K$, which values on $\Lambda$ are all in $[-1;1]$, and with an higher value at 0.
    
    (Uniqueness) We reuse the Lagrange interpolation representation to justify that 2 optimal polynomials must ``oscillate'' on the same points, therefore are equal.
    
    \begin{proof}
        We prove successively both directions:
        
        ($\Longleftarrow$): \textit{Assume $\exists \lambda_0 < \lambda_1 < \cdots < \lambda_K$ such that
        \begin{equation}
            \forall i \in \llbracket 0, K \rrbracket, P_K(\lambda_i) = (-1)^i \quad\mathrm{and}\quad P_K(\Lambda) \subset [-1, 1].
        \end{equation}
        We aim to prove that $P_K$ is  the unique solution $\sigma_K^{\Lambda}$ of \cref{eq:optimal_sigma}, that is for any other polynomial $Q_K$ of degree $K$ verifying $Q_K(\Lambda) \subset [-1, 1]$, $P_K(0) \geq Q_K(0)$.
        }
        
        We introduce such a polynomial  $Q_K$ of degree $K$ and  bounded in absolute value by $1$ on $\Lambda$. Let's define,  for all $i \in \llbracket 0,\, K \rrbracket$, 
        \begin{equation}
           v_i \triangleq Q_K(\lambda_i) \in [-1, 1].
        \end{equation}
        These  $K+1$ values characterize  $Q_K$ (of degree $K$), and we can decompose it over Lagrange interpolation polynomials.
        We have
        \begin{equation}
            Q_K = \sum_{i=0}^K v_i L_{\lambda_i} \quad\mathrm{where}\quad L_{\lambda_i}(X) \triangleq \prod_{j \neq i}\frac{X - \lambda_j}{\lambda_i - \lambda_j}\ .
        \end{equation}
        The value at 0 can be computed as
        \begin{equation}
            Q_K(0) = \sum_{i=0}^K v_i L_{\lambda_i}(0) = \sum_{i=0}^K v_i \prod_{j \neq i}\frac{\lambda_j}{\lambda_j - \lambda_i}\ .
        \end{equation}
        Maximizing this linear function of $(v_i)_{i \in\llbracket 0,\, K \rrbracket}$  over the $\ell_\infty$ ball $B_\infty(1)\triangleq \lbrace  (v_i)_{i \in\llbracket 0,\, K \rrbracket} , \forall i, -1 \leq v_i \leq 1\rbrace$  leads to, for $v^* \triangleq \argmin_{v\in B_\infty(1)} \sum_{i=0}^K v_i \prod_{j \neq i}\frac{\lambda_j}{\lambda_j - \lambda_i}\ $, 
        \begin{equation}
            v_i^* = \mathrm{sgn}\left(\prod_{j \neq i}\frac{\lambda_j}{\lambda_j - \lambda_i}\right) = (-1)^{i}.
        \end{equation}
        where $\mathrm{sgn}$ is the sign function (which maps 0 to 0, $\mathbb{R}_{<0}$ to $-1$, and $\mathbb{R}_{>0}$ to 1).
        Finally,
        \begin{equation}
            P_K(0) \geq Q_K(0)
        \end{equation}
        which concludes the proof.
        
        ($\Longrightarrow$): \textit{Assume $P_K$ alternates $s<K+1$ times  between $-1$ and $1$ on $\overline{\Lambda}$. We want to show that $P_K$ is not optimal in the sense described above. To do so, we construct a perturbation of $P_K$ that increases its value in 0 while still satisfying the constraint $ P_K(\Lambda) \subset [-1, 1]$.
        }
        
        Let's define 
        \begin{equation}
            \lambda_0^{(1)} < \cdots < \lambda_0^{(\nu_0)} < \lambda_1^{(1)} < \cdots < \lambda_1^{(\nu_1)} < \cdots < \lambda_{s-1}^{(1)} < \cdots < \lambda_{s-1}^{(\nu_{s-1})}
        \end{equation}
        such that
        \begin{equation}
            P_K(\lambda_i^{(j)}) = (-1)^i \quad\mathrm{and}\quad \forall \lambda \in \overline{\Lambda}, \left(\exists (i, j) | \lambda = \lambda_i^{(j)} \mathrm{~or~} |P_K(\lambda)|<1 \right).
        \end{equation}
        In short, $\left(\lambda_i^{(j)}\right)_{(i, j)}$ describes all the extremal points of $P_K$ in $\Lambda$. The indices change when the sign changes, while the exponents are used to express the possible consecutive repetitions of the same value ($-1$ or $1$).
        
        Set $(r_i)_{i\in\llbracket 0, s\rrbracket}$ as any set of positive numbers satisfying:
        \begin{equation}
            0 < r_0 < \inf(\Lambda) < \lambda_0^{(1)} < \lambda_0^{(\nu_0)} < r_1 < \cdots < r_s < \lambda_{s-1}^{(1)} < \lambda_{s-1}^{(\nu_{s-1})} < \sup(\Lambda) < r_{s}.
        \end{equation}
        By definition, each interval $[r_i, r_{i+1}]$, $i\in \llbracket 0,\, s-1 \rrbracket$, contains $\lambda_i^{(j)}$ for all $j$, but no other extremal points of $P_K$ in $\overline{\Lambda}$.
        Hence, $P_K([r_i, r_{i+1}] \cap \overline{\Lambda})$ doesn't contain $(-1)^{i+1}$.
        Since, $\bigcup_{i<s, i \mathrm{~even}} [r_i, r_{i+1}]\cap\overline{\Lambda}$ is compact, and by continuity of $P_K$, $P_K\left(\bigcup_{i<s, i \mathrm{~even}} [r_i, r_{i+1}]\cap\overline{\Lambda}\right)$ is compact.
        Therefore,
        \begin{equation}
            \exists \varepsilon_{-1}>0 | P_K\left(\bigcup_{i<s, i \mathrm{~even}} [r_i, r_{i+1}]\cap\overline{\Lambda}\right) \subset [-1 + \varepsilon_{-1}, 1].
        \end{equation}
        Similarly, we obtain
        \begin{equation}
            \exists \varepsilon_{1}>0 | P_K\left(\bigcup_{i<s, i \mathrm{~odd}} [r_i, r_{i+1}]\cap\overline{\Lambda}\right) \subset [-1, 1 - \varepsilon_{1}].
        \end{equation}
        
        We are now equipped to build the aforementioned perturbation. Let \begin{equation}
            Q_K(X) \triangleq \prod_{i\in\llbracket 0, s-1\rrbracket} (r_i - X).
        \end{equation}
        Note that $Q_K$ has a degree $s \leq K$ and satisfies
        \begin{equation}
            Q_K\left(\bigcup_{i<s, i \mathrm{~even}} [r_i, r_{i+1}]\cap\overline{\Lambda}\right) \subset \mathbb{R}^- \quad \mathrm{and} \quad Q_K\left(\bigcup_{i<s, i \mathrm{~odd}} [r_i, r_{i+1}]\cap\overline{\Lambda}\right) \subset \mathbb{R}^+.
        \end{equation}
        
        Moreover, those sets are compact , by continuity of $Q_K$, and consequently bounded. We can therefore choose a small enough $\varepsilon>0$ such that
        \begin{equation*}
            \hspace{-1cm}
            \varepsilon \min Q_K\left(\bigcup_{i<s, i \mathrm{~even}} [r_i, r_{i+1}]\cap\overline{\Lambda}\right) > -\varepsilon_{-1}
            \quad \mathrm{and} \quad
            \varepsilon \max Q_K\left(\bigcup_{i<s, i \mathrm{~odd}} [r_i, r_{i+1}]\cap\overline{\Lambda}\right) < \varepsilon_{1}.
        \end{equation*}
        This leads to 
        \begin{equation}
            (P_K + \varepsilon Q_K)(\Lambda) \subset [-1, 1].
        \end{equation}
        And as by definition, $Q_K(0)>0$, 
        \begin{equation}
            (P_K + \varepsilon Q_K)(0) > P_K(0).
        \end{equation}
        Finally $(P_K + \varepsilon Q_K) \in \mathbb{R}_K[X]$.
This  proves that $P_K$ is not optimal.
        
        (Uniqueness) \textit{Here, we prove  that the optimal polynomial is necessarily unique. To do so, we introduce 2 optimal polynomials and show there must actually be identical.}
        
        Let $P_K$ an optimal polynomial and $(\lambda_i)_{i \in \llbracket 0, K \rrbracket} \in \Lambda^{K+1}$ a family on which $P_K$ interpolates alternatively $1$ and $-1$. Let any other feasible polynomial $Q_K$ and $(v_i)_{i \in \llbracket 0, K \rrbracket}$ its values on $(\lambda_i)_{i \in \llbracket 0, K \rrbracket}$:
        \begin{equation}
            Q_K = \sum_{i=0}^K v_i L_{\lambda_i}.
        \end{equation}
        We have showed in the first point of this proof that the optimal values of $v_i$ are alternatively $1$ and $-1$. Consequently, if $Q_K$ is also optimal,
        \begin{equation}
            Q_K(\lambda_i) = P_K(\lambda_i)
        \end{equation}
        for all $i \in \llbracket 0, K \rrbracket$, which characterizes polynomials of degree $K$. Then
        \begin{equation}
            Q_K = P_K
        \end{equation}
        which shows that the optimal polynomial is unique.
    \end{proof}
    
    We now give the formal statement and the proof of the second result, used in \Cref{subsec:optim}. 
    \begin{Th}\label{thm:optimality_from_equioscillation}
        $T_n(\sigma_K)$ is optimal for all $n$ if and only if $\sigma_K$ verifies the equioscillation property (Definition \ref{def:equiosc}, hence $\sigma_K = \sigma_K^{\Lambda}$ by Theorem \ref{th:equioscillation}) and $\overline{\Lambda} = \sigma_K^{-1}([-1, 1])$, i.e. the inverse mapping $\sigma_K^{-1}$ transforms the interval $[-1,1]$ into exactly $\overline{\Lambda}$.
    \end{Th}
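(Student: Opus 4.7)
The plan is to reduce both directions to Theorem~\ref{th:equioscillation}: the polynomial $P_{Kn}(\cdot;\sigma_K) = T_n(\sigma_K)/T_n(\sigma_K(0))$ has degree $Kn$ and is minimax over $\Lambda$ iff it equioscillates with $Kn+1$ alternating extrema in $\overline{\Lambda}$. The whole argument is an extremum count on the composition $T_n \circ \sigma_K$.

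For the direction ($\Longleftarrow$), assume $\sigma_K = \sigma_K^{\Lambda}$ equioscillates and $\overline{\Lambda} = \sigma_K^{-1}([-1,1])$. I would first argue that under these hypotheses, $\overline{\Lambda}$ decomposes as a disjoint union $I_1 \cup \dots \cup I_K$ of $K$ closed intervals (the preimage of $[-1,1]$ under a degree-$K$ polynomial has at most $K$ components, and the existence of $K+1$ alternating equioscillation points forces exactly $K$), and that $\sigma_K$ is monotone on each $I_j$ mapping it bijectively onto $[-1,1]$. Then, for every Chebyshev extremum $y_k = \cos(k\pi/n)$, $k = 0,\dots,n$, there is exactly one preimage of $y_k$ in each $I_j$, yielding $n+1$ alternating extrema of $T_n(\sigma_K)$ inside $I_j$. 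Between $I_j$ and $I_{j+1}$, continuity through the gap $\{|\sigma_K|>1\}$ forces $\sigma_K(\max I_j) = \sigma_K(\min I_{j+1}) \in \{-1,+1\}$, so the last extremum of $I_j$ and first extremum of $I_{j+1}$ carry the same sign of $T_n(\sigma_K)$; discarding one at each of the $K-1$ junctions yields $K(n+1) - (K-1) = Kn+1$ alternating extrema of $T_n(\sigma_K)$ in $\overline{\Lambda}$. Theorem~\ref{th:equioscillation} then gives minimaxity.

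For the direction ($\Longrightarrow$), assume $T_n(\sigma_K)/T_n(\sigma_K(0))$ is minimax for every $n\geq 1$. Specializing to $n=1$, since $T_1(x) = x$, this says $\sigma_K/\sigma_K(0)$ is minimax; Theorem~\ref{th:equioscillation} yields the equioscillation property of $\sigma_K$, hence $\sigma_K = \sigma_K^{\Lambda}$. For the preimage condition, I would argue by contradiction: suppose $\overline{\Lambda} \subsetneq \sigma_K^{-1}([-1,1])$. Since $\sigma_K^{-1}([-1,1])$ is a finite union of closed intervals $I_1,\dots,I_K$ (same argument as above) and $\overline{\Lambda}$ is closed and strictly contained in it, there exists a non-degenerate interval $I' \subseteq \sigma_K^{-1}([-1,1]) \setminus \overline{\Lambda}$. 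Its image $\sigma_K(I')$ contains a non-trivial subinterval $[a,b] \subseteq [-1,1]$ with $a < b$. For large $n$, the $n+1$ Chebyshev extrema $\cos(k\pi/n)$ are approximately uniformly distributed on $[-1,1]$ (Chebyshev density), so at least $c\,n$ of them land in $[a,b]$ for some constant $c>0$. For each such $y_k$, the (at most $K$) preimages of $y_k$ under $\sigma_K$ include at least one point in $I'$, hence outside $\overline{\Lambda}$, so the count $|\sigma_K^{-1}(y_k) \cap \overline{\Lambda}| \leq K-1$. Summing over all $k\in\{0,\dots,n\}$ using that the unrestricted count is exactly $Kn+1$ (interior values contribute $K$ each, and the two endpoints $\pm 1$ contribute a total of $K+1$ by the equioscillation of $\sigma_K$), the number of points of $\overline{\Lambda}$ where $|T_n(\sigma_K)| = 1$ is at most $(Kn+1) - cn < Kn+1$ for $n$ large. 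This is a strict upper bound on alternating extrema, contradicting minimaxity via Theorem~\ref{th:equioscillation}.

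The main obstacle I anticipate is the counting step in the backward direction, specifically the justification that the existence of a single proper gap $I' \subseteq \sigma_K^{-1}([-1,1]) \setminus \overline{\Lambda}$ forces a linear (in $n$) deficit of extrema, rather than merely a sub-linear one. This uses two facts that need care: the Chebyshev extrema have density of order $n$ on any fixed subinterval of $[-1,1]$, and the preimages under the degree-$K$ polynomial $\sigma_K$ distribute exactly one-per-branch on each $I_j$ when $y$ is an interior value, so a single missing branch costs one preimage per $y$. The other technical bookkeeping (that the extrema of $T_n(\sigma_K)$ really are the preimages of the Chebyshev extrema, and that junction values $\pm 1$ contribute the expected $K+1$ total) is routine but relies on $\sigma_K$ being monotone on each $I_j$, which follows from counting critical points of $\sigma_K$ against the equioscillation condition.
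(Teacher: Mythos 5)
Your proposal is correct and follows the same overall strategy as the paper: reduce both directions to Theorem~\ref{th:equioscillation} via a count of alternating extrema of $T_n\circ\sigma_K$. The two proofs differ in how they organize the count, and the paper's version is leaner in each direction.

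For ($\Longleftarrow$), you decompose $\overline\Lambda$ into $K$ intervals $I_1,\dots,I_K$, establish monotonicity of $\sigma_K$ on each, count $n+1$ alternations per interval, and then carefully discard one extremum at each of the $K-1$ junctions. The paper avoids the decomposition and the junction bookkeeping entirely: it starts from the $K+1$ equioscillation points $\lambda_i'$ of $\sigma_K$, applies the intermediate value theorem between consecutive pairs $(\lambda_i',\lambda_{i+1}')$ to place $n-1$ points where $\sigma_K=\cos(j\pi/n)$, and uses the hypothesis $\overline\Lambda=\sigma_K^{-1}([-1,1])$ just once, to conclude that these intermediate points lie in $\overline\Lambda$. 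The total count $(K+1)+K(n-1)=Kn+1$ falls out directly. Your version is not wrong, but it front-loads structural claims (the $K$-interval decomposition, per-branch bijectivity, the junction values agreeing) that the paper does not need to verify.

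For ($\Longrightarrow$), you argue by contradiction: a proper gap $I'$ in $\sigma_K^{-1}([-1,1])\setminus\overline\Lambda$ would, via Chebyshev density, cost $\Omega(n)$ extrema, contradicting the $Kn+1$ required by minimaxity. The paper takes a shorter route that sidesteps the very point you flag as your "main obstacle." It first counts: the total number of $\lambda$ with $T_n(\sigma_K(\lambda))=\pm1$ is at most $(n+1)K$ (degree bound alone), while minimaxity forces at least $nK+1$ of them to lie in $\overline\Lambda$, hence at most $(n+1)K-(nK+1)=K-1$ lie outside $\overline\Lambda$, uniformly in $n$. Then it lets $n\to\infty$: preimages of the Chebyshev extrema become dense in $\sigma_K^{-1}([-1,1])$, and since all but a bounded number ($K-1$) of them are in the closed set $\overline\Lambda$, the density argument forces $\sigma_K^{-1}([-1,1])\subseteq\overline\Lambda$. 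This replaces your linear-deficit estimate with an elementary pigeonhole bound: one never needs to quantify how many extrema a gap would swallow, only to note that the slack between the upper bound $(n+1)K$ and the lower bound $nK+1$ is a constant. Your argument is correct but does more work than necessary; the constant-slack observation is the simplification worth internalizing here.
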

    
    Before providing the proof, we first highlight that the property
    \begin{equation}
        \forall \lambda\in\Lambda, \sigma_K(\lambda) \in [-1, 1]
    \end{equation}
    can equivalently be written
    \begin{equation}
        \overline{\Lambda} \subset \sigma_K^{-1}([-1, 1]).
    \end{equation}
    In other words, we are interested in the case where the reverse inclusion holds as well. This means that
     \begin{equation}
          \sigma_K(\lambda) \in [-1, 1]\Rightarrow  \lambda\in\overline{\Lambda}.
    \end{equation}
    This corresponds to a stronger form of optimality of $\sigma_K$: it ``fully'' uses the available assumption related to $\Lambda$, in the sense that no point can be added to $\overline \Lambda$ without breaking the condition $\sigma_K(\Lambda)\subset [-1;1]$. For example, on \Cref{fig:minimax_polynomials}, $\sigma_3^\Lambda$ \textit{does not} satisfy the later property on the center graph, but satisfies it on the right  graph.
    Here, we show that under this condition, $T_n(\sigma_K) = T_n(\sigma_K^{\Lambda})$ is optimal (in the sense of \eqref{eq:optimal_sigma}) for all $n\in\mathbb{N}$.
    
    In Section~\ref{subsec:optim}, we give another view of this condition for $T_n(\sigma_K)$ to be optimal for all $n$.
    We can decompose $\Lambda$ as the union of $K$ intervals $\Lambda_i$ such that they have disjoint interiors and they are all mapped to $[-1, 1]$ by $\sigma_K$. Hence, $\sigma_K$ maps $\Lambda$ to $[-1, 1]$ exactly $K$ times.
    
    \begin{proof}
        From Theorem \ref{th:equioscillation}, $T_n(\sigma_K)$ is optimal for all $n$ if and only if, for all $n$, there exist a sorted family of $(\lambda_i)_{i\in\llbracket 0, nK \rrbracket}$ such that, $T_n(\sigma_K(\lambda_i)) = (-1)^i$.
        
        Let $n \in \mathbb{N}$. We observe that  by definition of $T_n$,  
        \begin{equation}
            T_n(\sigma_K(\lambda)) = \pm 1 \quad \mathrm{if~and~only~if} \quad \exists j\in \llbracket 0, n\rrbracket | \sigma_K(\lambda) = \cos{\frac{j\pi}{n}}.
            \label{eq:cos_values_reaching_pm1_through_Tn}
        \end{equation}
        
        We successively treat both directions:
        ($\Longleftarrow$) \textit{we assume $\sigma_K$ oscillates and $\overline{\Lambda} = \sigma_K^{-1}([-1, 1])$. We aim to  prove that $T_n(\sigma_K)$ is optimal for all $n\in\mathbb{N}$.
        }
            
        By equioscillation property, we know that there exists $\lambda_i'$ such that
        \begin{equation}
            \sigma_K(\lambda_i') = (-1)^i.
        \end{equation}
        
        By the intermediate value theorem, we know that for any $i\in \llbracket 0;K\rrbracket$,  between the pair $\lambda_i', \lambda_{i+1}'$, there exist sorted $(\mu_i^j)_{ni<j<(n+1)i}$ such that for all $j \in \llbracket ni+1;(n+1)i-1\rrbracket$,
        \begin{equation}
            \sigma_K(\mu_i^j) = \cos{\frac{j\pi}{n}}.
        \end{equation}
        We  identify $\lambda_{ni} = \lambda_i'$ and $\lambda_{j} = \mu_{\lfloor j/n\rfloor}^j$ for all $j$ not multiple of $n$.
    Then, for all $\ell \in\llbracket 0, nK\rrbracket$:
        \begin{equation}
            T_n(\sigma_K(\lambda_\ell)) = (-1)^\ell.
        \end{equation}
        By Theorem \ref{th:equioscillation}, we conclude that  $T_n(\sigma_K)$ is optimal for all $n\in\mathbb{N}$.
            
        ($\Longrightarrow$) \textit{We assume $T_n(\sigma_K)$ is optimal for all $n\in\mathbb{N}$. Clearly, $\sigma_K$ is optimal ($n=1$), and then equioscillates. We prove that moreover
        \begin{equation}
            \overline{\Lambda} = \sigma_K^{-1}([-1, 1]).
        \end{equation}
        }
        On the one hand, for any $j \in \llbracket 0, n\rrbracket$, there exist at most $K$ different $\lambda$ that verifies $\sigma_K(\lambda) = \cos{\frac{j\pi}{n}}$ since $\sigma_K$ has a degree $K$ and is not constant.
       Therefore, there exist at most $(n+1)K$ different $\lambda$ such that $\exists j\in \llbracket 0, n\rrbracket | \sigma_K(\lambda) = \cos{\frac{j\pi}{n}}$, and by Eq.\eqref{eq:cos_values_reaching_pm1_through_Tn}, there thus exist at most $(n+1)K$ different $\lambda$ such that $T_n(\sigma_K(\lambda) = \pm 1$.
        
        On the other hand, the optimality of $T_n(\sigma_K)$ implies the existence of  \textit{at least} $nK+1$ such $\lambda$ in $\overline \Lambda$.
        
        Hence  all but at most $K-1$ values $\lambda$ such that $\sigma_K(\lambda) \in \lbrace \cos{\frac{j\pi}{n}}, j\in  \llbracket 0, n\rrbracket \rbrace$  belong to $\overline{\Lambda}$.
        
        This holds for all $n$. Therefore for $n$ large enough, all $x$ such that $\sigma(x)\in [-1, 1]$ are as close as we want to some $\lambda\in\overline{\Lambda}$.
        Since $\overline{\Lambda}$ is a closed set, then all $x$ such that $\sigma(x)\in [-1, 1]$ are actually in $\overline{\Lambda}$.
        
        We conclude
        \begin{equation}
            \overline{\Lambda} \supset \sigma_K^{-1}([-1, 1]).
        \end{equation}
        
        \end{proof}

\section{Cyclical step-sizes}\label{apx:cycling}
    
    In this appendix, we provide an analysis of momentum methods with cyclical step-sizes and derive some non-asymptotically optimal variants.
    
    \subsection{Derivation of optimal algorithm with \texorpdfstring{$K=2$}{K=2} alternating step-sizes}\label{apx:derivation-of-optimal-algorithm-withstep-sizes}
    
        In this section, we consider the case where $\Lambda$ is the union of 2 intervals of same size, as described in Section \ref{sec:alternating_hb}.
        
        We start by introducing the following algorithm, and we will prove later that this algorithm is optimal (Theorem \ref{thm:cyclical_chebyshev_2_steps})
        \begin{algorithm}[ht] \label{algo:cyclical_chebyshev_2_steps}
            \caption{Optimal momentum method with alternating step-sizes ($K=2$)}
            \SetAlgoLined
            \textbf{Input:} Initialization $x_0$, $\muone < \Lone < \mutwo < \Ltwo$ $\quad$ (where $\Lone-\muone = \Ltwo-\mutwo$)\\
            \textbf{Set:} $\rho = \frac{\Ltwo+\muone}{\Ltwo-\muone}$, $R=\frac{\textcolor[HTML]{4DAF4A}{\boldsymbol{\mu}_2 - \boldsymbol{L}_1}}{\textcolor[HTML]{984ea3}{\boldsymbol{L}_2 - \boldsymbol{\mu}_1}}$, $c = \sqrt{\frac{\rho^2 - R^2}{1 - R^2}}$ \\
            ~\\
            $\omega_0 = 2$ \\
            $x_1 = x_0 - \frac{1}{\Lone}\nabla f(x_0)$ \\
            \For{$t = 1,\, 2,\,\ldots$}{
                \begin{align*}
                    \omega_{t} & ~ = \left(1 - \frac{1}{4c^2}\omega_{t-1}\right)^{-1} \\
                    h_t & = \textstyle \frac{\omega_t}{\Lone} \quad \text{(if $t$ is even)}, \qquad h_t = \frac{\omega_t}{\mutwo} \quad \text{(if $t$ is odd)} \\
                    x_{t+1} & = ~ x_t - h_t \nabla f(x_t) + (\omega_t-1)(x_{t}-x_{t-1})
                \end{align*}
                \vspace{-2ex}
            }
        \end{algorithm}
        
        \begin{Th}\label{thm:cyclical_chebyshev_2_steps}
            Let $f\in \mathcal{C}_\Lambda$ and $x_0\in\mathbb{R}^d$. Assume $\Lambda$ defined as in \eqref{eq:def_lambda}. The iterates of Algorithm ~\ref{algo:cyclical_chebyshev_2_steps} verifies the condition
            \begin{equation}\label{eq:best_2K_polynomial}
                x_{2n} - x_* = \frac{T_n(\sigma_2^{\Lambda}(H))}{T_n(\sigma_2^{\Lambda}(0))} (x_0 - x_*)
            \end{equation}
            and this is the optimal convergence rate over $\mathcal{C}_\Lambda$.
        \end{Th}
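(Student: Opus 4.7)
The plan is to split the proof into (i) optimality of the stated rate, which is essentially immediate from earlier results, and (ii) verification of the polynomial identity \eqref{eq:best_2K_polynomial} for Algorithm~\ref{algo:cyclical_chebyshev_2_steps}, which is the substantive work.

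For (i), I would invoke Proposition~\ref{prop:lambda_union_two_interval}, which identifies $T_n(\sigma_2^{\Lambda}(\lambda))/T_n(\sigma_2^{\Lambda}(0))$ as the degree-$2n$ minimax polynomial over $\Lambda$. Combined with Proposition~\ref{prop:link_algo_poly} and the worst-case bound \eqref{eq:rate_convergence_polynomial}, any first order method whose $2n$-th residual polynomial equals this minimax polynomial attains the optimal worst-case rate over $\mathcal{C}_\Lambda$ at even iterations, so it suffices to prove \eqref{eq:best_2K_polynomial}.

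For (ii), the plan is to proceed by induction on $n$, following the template of Appendix~\ref{subsec:chebyshev-method} with $\sigma_2^{\Lambda}$ replacing $\sigma_1^{[\mu,L]}$. Let $P_t$ denote the residual polynomial of $x_t$ via Proposition~\ref{prop:link_algo_poly}. The algorithmic update translates to
\[
P_{t+1}(\lambda) = (\omega_t - h_t \lambda) P_t(\lambda) - (\omega_t - 1) P_{t-1}(\lambda),
\]
with $P_0 = 1$, $P_1(\lambda) = 1-\lambda/L_1$, $h_t = \omega_t/L_1$ for even $t$, and $h_t = \omega_t/\mu_2$ for odd $t$. The induction hypothesis is two-fold: $P_{2n}(\lambda) = T_n(\sigma_2^{\Lambda}(\lambda))/T_{2n}(c)$ and $P_{2n+1}(\lambda) = c(1-\lambda/L_1)\Phi_n(\sigma_2^{\Lambda}(\lambda))/T_{2n+1}(c)$, where $\Phi_n$ is an auxiliary family defined by $\Phi_0 = 1$ and $\Phi_n = 2 T_n - \Phi_{n-1}$. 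The first normalization matches the theorem's $T_n(\sigma_2^{\Lambda}(0))$ via the half-angle identity $T_n(2c^2-1) = T_{2n}(c)$ together with the easily-verified $\sigma_2^{\Lambda}(0) = 2c^2 - 1$.

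The inductive step rests on three algebraic ingredients: (a) the factorization $(1-\lambda/L_1)(1-\lambda/\mu_2) = (\sigma_2^{\Lambda}(\lambda)+1)/(2c^2)$, immediate from the definition of $\sigma_2^{\Lambda}$ in Proposition~\ref{prop:lambda_union_two_interval} combined with $(1+m)^2/(2m) = 2c^2$; (b) the closed forms $\omega_t = 2c\, T_t(c)/T_{t+1}(c)$ and $\omega_t - 1 = T_{t-1}(c)/T_{t+1}(c)$, obtained by induction from the recursion $\omega_t = (1 - \omega_{t-1}/(4c^2))^{-1}$ and the Chebyshev three-term identity; and (c) the auxiliary identity $(x+1)\Phi_{n-1}(x) = T_n(x) + T_{n-1}(x)$, proved by induction on $n$ using the recursions for $\Phi$ and $T$. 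Substituting the induction hypotheses for $P_{2n}$ and $P_{2n-1}$ into the algorithmic recursion and simplifying via (a)--(c) yields $P_{2n+2}(\lambda) = T_{n+1}(\sigma_2^{\Lambda}(\lambda))/T_{2n+2}(c)$, closing the induction. The main obstacle is precisely identifying the correct form of the odd-index polynomial $P_{2n+1}$: without tracking it via the auxiliary $\Phi_n$, the ``extraneous'' $P_{2n-1}$ term that appears when composing two successive updates cannot be reconciled with the Chebyshev recursion $T_{n+1} = 2x T_n - T_{n-1}$.
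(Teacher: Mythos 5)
Your proof is correct, and it takes a genuinely different route from the paper's. The paper's argument is \emph{constructive}: starting from the target identity it introduces the auxiliary sequence $z_l = T_l(c)(x_l - x_*)$, which turns the heavy-ball recursion into $z_{l+1} = 2c(1-\tilde h_l\lambda)z_l - z_{l-1}$; composing two such steps and matching coefficients against the Chebyshev recursion $z_{2(n+1)} = 2\sigma_2^\Lambda(\lambda) z_{2n} - z_{2(n-1)}$ forces $\tilde h_l$ to alternate between $1/L_1$ and $1/\mu_2$, after which the $\omega_t$ recursion is read off. This explains \emph{why} the step-sizes must alternate. Your argument instead takes the algorithm as given and \emph{verifies} the claimed residual polynomial by a two-track induction: you supply an explicit closed form for the odd-index polynomials (via the auxiliary sequence $\Phi_n$ with $\Phi_n = 2T_n - \Phi_{n-1}$, which is, up to normalization, the fourth-kind Chebyshev polynomial $W_n$), so that the two-step composition closes cleanly through the identity $(x+1)\Phi_n(x) = T_{n+1}(x) + T_n(x)$ and the factorization $(1-\lambda/L_1)(1-\lambda/\mu_2) = (\sigma_2^\Lambda(\lambda)+1)/(2c^2)$. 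I checked each of your ingredients (a)--(c), the base cases, and both halves of the inductive step, and the algebra does close as you describe. What your route buys is a self-contained, machine-checkable verification that never requires ``guessing'' the algorithm; what the paper's route buys is the derivation itself, which simultaneously yields the step-size alternation and the $\omega_t$ update rather than taking them as given. The optimality half of your argument matches the paper's (equioscillation via Proposition~\ref{prop:lambda_union_two_interval} together with the polynomial--algorithm correspondence).
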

        
        \begin{proof}
            
            We begin by showing the optimality of the algorithm. Using Proposition~\ref{prop:2interval_opt_iif_same_size}, the polynomial in \eqref{eq:best_2K_polynomial} equioscillates on $\Lambda$, which makes it optimal by Theorem \ref{th:equioscillation}. By optimal, this means this is the optimal convergence rate any first order algorithm can reach (See~\eqref{eq:rate_convergence_polynomial}). We invite the reader to read Appendix~\ref{apx:case-of-a-2-step-sizes-cycle}, where we study in details the properties of the alternating steps sizes strategy (i.e., $K=2$).

            As in Appendix~\ref{subsec:chebyshev-method}, we derive here the constructive approach that leads us to this algorithm.

            We now start showing that the iterates of Algorithm \ref{algo:cyclical_chebyshev_2_steps} follow \eqref{eq:best_2K_polynomial}. From \cref{eq:best_2K_polynomial}, projecting onto the eigenspace of eigenvalue $\lambda$,
            \begin{equation}
                x_{2n} - x_* = \frac{T_n(\sigma_2^{\Lambda}(\lambda))}{T_n(\sigma_2^{\Lambda}(0))} (x_0 - x_*).
            \end{equation}
            Then, we find a recursion definition for the subsequence $(x_{2n})_{n\in\mathbb{N}}$.
            Let $n \geq 1$.
            \begin{align}
                x_{2(n+1)} - x_* &~ = \frac{T_{n+1}(\sigma_2^{\Lambda}(\lambda))}{T_{n+1}(\sigma_2^{\Lambda}(0))} (x_0 - x_*), \\
                &~ = \frac{2 \, \sigma_2^{\Lambda}(\lambda) \, T_n(\sigma_2^{\Lambda}(\lambda)) - T_{n-1}(\sigma_2^{\Lambda}(\lambda))}{T_{n+1}(\sigma_2^{\Lambda}(0))} (x_0 - x_*), \\
                &~ = \frac{2 \, \sigma_2^{\Lambda}(\lambda) \, T_n(\sigma_2^{\Lambda}(0))}{T_{n+1}(\sigma_2^{\Lambda}(0))} (x_{2n} - x_*) - \frac{T_{n-1}(\sigma_2^{\Lambda}(0))}{T_{n+1}(\sigma_2^{\Lambda}(0))} (x_{2(n-1)} - x_*).
            \end{align}
            Note that if $\sigma_2^{\Lambda}(\lambda)$ were a degree 1 polynomial in $\lambda$, then we would recognize a momentum update. Here, $\sigma_2^{\Lambda}(\lambda)$ is actually a degree $2$ polynomial in $\lambda$. We will then try to identify $2$ steps of momentum. From here, let
            \begin{equation}
                c \triangleq \frac{1}{2}\left(\left(\sigma_K(0) + \sqrt{\sigma_K(0)^2-1}\right)^{1/2} + \left(\sigma_K(0) - \sqrt{\sigma_K(0)^2-1}\right)^{1/2}\right) = \sqrt{\frac{\sigma_K(0)+1}{2}}
            \end{equation}
            be the unique positive real number $c$ verifying $T_2(c) = 2c^2-1 = \sigma_K(0)$. We end up with
            \begin{equation}\label{eq:2_momentum_steps_equivalent}
                x_{2(n+1)} - x_* = \frac{2 \, \sigma_2^{\Lambda}(\lambda) \, T_{2n}(c)}{T_{2(n+1)}(c)} (x_{2n} - x_*) - \frac{T_{2(n-1)}(c)}{T_{2(n+1)}(c)} (x_{2(n-1)} - x_*).
            \end{equation}
            Note, the above equation suggests to introduce the sequence $z_l \triangleq T_l(c)(x_l - x_*)$. Indeed, the above equality simplifies
            \begin{equation}\label{eq:2_steps_cheby_in_z}
                z_{2(n+1)} = 2 \, \sigma_2^{\Lambda}(\lambda) \, z_{2n} - z_{2(n-1)}.
            \end{equation}
            Let's look for two steps of the cyclical heavy ball method that are together equivalent to \eqref{eq:2_momentum_steps_equivalent}.
            We look for an algorithm of the form
            \begin{equation}
                \forall n \geq 0,
                    x_{n+1} = x_{n} - h_{n} \nabla f(x_{n}) + \frac{T_{n-1}(c)}{T_{n+1}(c)} \left( x_{n} - x_{n-1} \right),
            \end{equation}
            i.e, projecting again onto the eigenspace of eigenvalue $\lambda$, we obtain
            \begin{equation}\label{eq:one_step_chebyshev_decomposition}
                \forall n \geq 0,
                    x_{n+1}-x_* = \left(1 + \frac{T_{n-1}(c)}{T_{n+1}(c)} - h_{n} \lambda \right) (x_{n}-x_*) - \frac{T_{n-1}(c)}{T_{n+1}(c)} \left( x_{n-1}-x_* \right).
            \end{equation}
            Here we introduce the notation
            \begin{equation}
                \omega_{l} \triangleq \left(1 + \frac{T_{l-1}(c)}{T_{l+1}(c)}\right) = 2c\frac{T_l(c)}{T_{l+1}(c)},
            \end{equation}
            and the change of variable
            \begin{equation}
                \Tilde{h_l} \triangleq \frac{h_l}{\omega_{l}}.
            \end{equation}
            We rewrite \eqref{eq:one_step_chebyshev_decomposition} in terms of the sequence $z$ and using the sequence $\Tilde{h}$,
            \begin{align}
                \forall n \geq 0,
                    z_{n+1} & ~ = T_{n+1}(c) \left(1 + \frac{T_{n-1}(c)}{T_{n+1}(c)} - h_{n} \lambda \right) (x_{n}-x_*) - z_{n-1} \\
                    & ~ = \left(2cT_n(c) (1 - \Tilde{h}_{n} \lambda)\right) (x_n - x_*) - z_{n-1} \\
                    & ~ = \left(2c (1 - \Tilde{h}_{n} \lambda)\right) z_n - z_{n-1}.
            \end{align}
            We now need to find the right sequence $\Tilde{h}_n$ such that we recover \cref{eq:2_steps_cheby_in_z}.
            Combining the 2 following
            \begin{align}
                z_{2n+1} & ~ = \left(2c (1 - \Tilde{h}_{2n} \lambda)\right) z_{2n} - z_{2n-1} \\
                z_{2n+2} & ~ = \left(2c (1 - \Tilde{h}_{2n+1} \lambda)\right) z_{2n+1} - z_{2n}
            \end{align}
            by isolating the odd index in the second equation and plugging it in the first one, we get
            \begin{equation}
                z_{2n+2} = \left(4c^2 (1 - \Tilde{h}_{2n} \lambda)(1 - \Tilde{h}_{2n+1} \lambda) - 1 - \frac{2c (1 - \Tilde{h}_{2n+1} \lambda)}{2c (1 - \Tilde{h}_{2n-1} \lambda)}\right) z_{2n} - \frac{2c (1 - \Tilde{h}_{2n+1} \lambda)}{2c (1 - \Tilde{h}_{2n-1} \lambda)} z_{2n-2}.
            \end{equation}
            We need to identify
            \begin{align}
                2\sigma_2^{\Lambda}(\lambda) & ~ = 4c^2 (1 - \Tilde{h}_{2n} \lambda)(1 - \Tilde{h}_{2n+1} \lambda) - 1 - \frac{2c (1 - \Tilde{h}_{2n+1} \lambda)}{2c (1 - \Tilde{h}_{2n-1} \lambda)}\,, \\
                1 & ~ = \frac{2c (1 - \Tilde{h}_{2n+1} \lambda)}{2c (1 - \Tilde{h}_{2n-1} \lambda)}.
            \end{align}
            Hence, we conclude from the second equation that $\Tilde{h}_{2n+1} = \Tilde{h}_{2n-1} = \Tilde{h}_1$ is independent of $n$.
            And the first equation then becomes
            \begin{equation}
                2\sigma_2^{\Lambda}(\lambda) = 4c^2 (1 - \Tilde{h}_{2n} \lambda)(1 - \Tilde{h}_1 \lambda) - 2
            \end{equation}
            leading also to $\Tilde{h}_{2n}$ independent of $n$.
            We observe an alternating strategy of the ``pseudo-step-sizes'' $\Tilde{h}_0$ and $\Tilde{h}_1$.
            Finally, we must fix them to
            \begin{equation}
                \sigma_2^{\Lambda}(\lambda) = 2c^2 (1 - \Tilde{h}_0 \lambda)(1 - \Tilde{h}_1 \lambda) - 1.
            \end{equation}
            Note this is possible because the equation above is valid for $\lambda=0$ for any choice of $\Tilde{h}_0$ and $\Tilde{h}_1$ and the polynomial $\sigma_2^{\Lambda}+1$ can be defined by its value in 0 and its roots that are exactly $\frac{1}{\Tilde{h}_0}$ and $\frac{1}{\Tilde{h}_1}$. And from~\eqref{eq:sigma_2_reaches_m1}, those values are $\mutwo$ and $\Lone$, which gives the values $\Tilde{h}_0 = \frac{1}{\Lone}$ and $\Tilde{h}_1=\frac{1}{\mutwo}$.
            
            We now sum up what we have so far.
            Setting $c$, $\Tilde{h}_0$ and $\Tilde{h}_1$ as described above, the iterations
            \begin{equation}
                \forall n \geq 1,
                    x_{n+1} = x_{n} - \left(1 + \frac{T_{n-1}(c)}{T_{n+1}(c)}\right)\Tilde{h}_{\text{mod}(n,2)} \nabla f(x_{n}) + \frac{T_{n-1}(c)}{T_{n+1}(c)} \left( x_{n} - x_{n-1} \right)
            \end{equation}
            lead to the recursion \eqref{eq:2_steps_cheby_in_z}.
            
            Let define $x_1 = x_0 - \Tilde{h}_0 \nabla f(x_0)$, and from the above
            \begin{align}
                x_2 & ~ = x_1 - \left(1+\frac{1}{2c^2-1}\right) \Tilde{h}_1\lambda (x_1 - x_*) + \frac{1}{2c^2-1} (x_1 - x_0) \\
                x_2 - x_* & ~ = \frac{2c^2}{\sigma_2^{\Lambda}(0)} \left(1 - \Tilde{h}_1\lambda \right) (x_1 - x_*) - \frac{1}{\sigma_2^{\Lambda}(0)} (x_0-x_*) \\
                & ~ = \frac{2c^2}{\sigma_2^{\Lambda}(0)} \left(1 - \Tilde{h}_1\lambda \right) \left(1 - \Tilde{h}_0 \lambda \right)(x_0 - x_*) - \frac{1}{\sigma_2^{\Lambda}(0)} (x_0-x_*) \\
                & ~ = \frac{\sigma_2^{\Lambda}(\lambda)}{\sigma_2^{\Lambda}(0)}(x_0 - x_*) \\
                z_2 & ~ = \sigma_2^{\Lambda}(\lambda).
            \end{align}
            
            Finally, the sequence $z_{2n}$ is defined by
            \begin{align}
                z_0 & ~ = 1, \\
                z_1 & ~ = \sigma_2^{\Lambda}(\lambda), \\
                z_{2(n+1)} & ~ = 2 \, \sigma_2^{\Lambda}(\lambda) \, z_{2n} - z_{2(n-1)}.
            \end{align}
            which defines exactly $T_n(\sigma_2^{\Lambda}(\lambda))$.
            We conclude $x_{2n}-x_* = \frac{T_n(\sigma_2^{\Lambda}(\lambda))}{T_n(\sigma_2^{\Lambda}(0))}(x_0 - x_*)$.
            
            We sum up the algorithm used to reach the above equality:
            \begin{align}
                x_1 & ~ = x_0 - \Tilde{h}_0 \nabla f(x_0), \\
                \forall n \geq 0,
                x_{n+1} & ~ = x_{n} - \omega_n \Tilde{h}_{n} \nabla f(x_{n}) + (\omega_n - 1) \left( x_{n} - x_{n-1} \right)\,.
            \end{align}
            with $\omega_{n} = \left(1 + \frac{T_{n-1}(c)}{T_{n+1}(c)}\right) = \frac{2c T_{n}(c)}{T_{n+1}(c)}$.
            Note the recursion
            \begin{align}
                \omega_{n}^{-1} & ~ = \frac{T_{n+1}(c)}{2c T_{n}(c)} \\
                & ~ = \frac{2c T_{n}(c) - T_{n-1}(c)}{2c T_{n}(c)} \\
                & ~ = 1 - \frac{T_{n-1}(c)}{2c T_{n}(c)} \\
                & ~ = 1 - \frac{1}{4c^2}\frac{2c T_{n-1}(c)}{ T_{n}(c)} \\
                & ~ = 1 - \frac{1}{4c^2}\omega_{n-1}.
            \end{align}
            Finally, the sequence $\omega$ can be computed online using the recursion
            \begin{equation}
                \omega_{n} = \frac{1}{1 - \frac{1}{4c^2}\omega_{n-1}}
            \end{equation}
            with $\omega_0 = 2$.
        \end{proof}
        
        In this appendix, as well as in Appendix~\ref{apx:optimal-methods-for-strongly-convex-smooth-quadratic-objective}, we end up with some equality of the form
        \begin{equation}
            \|x_t - x_*\| = \frac{T_n(\sigma_K(H))}{T_n(\sigma_K(0))} \|x_0 - x_*\|\,.
        \end{equation}
        The next theorem explains how to derive the rate factor from it.
        
        \rateconvergencesigma*
        
        \begin{proof}
            We observe that the rate factor of the method is upper bounded by
            \begin{equation}
                \sqrt[t]{\underset{\lambda \in \Lambda}{\sup} |Z_t^{\Lambda}(\lambda)|} = \sqrt[t]{\underset{\lambda \in \Lambda}{\sup}  \left|  \frac{T_{t/K}\left( \sigma_K^{\Lambda}(\lambda) \right)}{T_{t/K}\left(\sigma_0\right)} \right|} = \sqrt[t]{ \frac{1}{|T_{t/K}\left(\sigma_0\right)|}} \quad \text{if } \underset{\lambda \in \Lambda}{\sup} |\sigma_K(\lambda)| = 1.
            \end{equation}
            
            Since $ \sigma_0 >1$, and by using the explicit formula of Chebyshev polynomials, we have that
            \begin{equation}
                T_{t/K}\left(\sigma_0\right) = \frac{\left(\sigma_0 + \sqrt{\sigma_0^2-1}\right)^{t/K} + \left(\sigma_0 - \sqrt{\sigma_0^2-1}\right)^{t/K}}{2} \underset{t\rightarrow\infty}{\sim} \frac{\left(\sigma_0 + \sqrt{\sigma_0^2-1}\right)^{t/K}}{2}.
            \end{equation}
            Taking the limit gives
            \begin{equation}
                \lim\limits_{t\rightarrow\infty} \sqrt[t]{ \frac{1}{|T_{t/K}\left(\sigma_0\right)|}} = \left(\frac{1}{\sigma_0+\sqrt{\sigma_0^2-1}}\right)^{\frac{1}{K}} = \left(\sigma_0-\sqrt{\sigma_0^2-1}\right)^{\frac{1}{K}}.
            \end{equation}
        \end{proof}
    
    \subsection{Derivation of heavy ball with \texorpdfstring{$K$}{K} step-sizes cycle}\label{apx:derivation-of-heavy-ball-withstep-sizes-cycle}
    
        In this section, we consider heavy ball algorithm with a cycle of $K$ different step-sizes.
        For convenience, we restate Algorithm \ref{algo:cyclical_heavy_ball} below.

        \HBK*
        
        We first recall the convergence theorem~\ref{thm:general_rate_convergence} stated in Section~\ref{ssec:resulting_algo}.
        \generalrateconvergence*

        \begin{proof}
            Note a first trick. Let's define $x_{-1} \triangleq x_0 - \dfrac{h_0}{1 + m}\nabla f(x_0)$.
            This way, $x_{t+1} = x_t - h_{\text{mod}(t,K)} \nabla f(x_t) + m(x_{t}-x_{t-1})$ holds for any $t\geq0$ (including $t=0$).
            
            Now, let's introduce the polynomials $P_t$ defined by Proposition \ref{prop:link_algo_poly} as $x_t - x_* = P_t(H)(x_0 - x_*)$.
            From now, in order to highlight the $K$-cyclic behavior, we introduce the indexation $t=nK+r$, with $r\in\llbracket 0, K-1\rrbracket$.
            
            We verify the following:
            \begin{align}
                P_{-1}(\lambda) & ~ = 1 - \dfrac{h_0\lambda}{1+m}, \\
                P_0(\lambda) & ~ = 1, \\
                \forall n\geq0, r\in\llbracket 0, K-1 \rrbracket, ~~ P_{nK+r+1}(\lambda) &~ = (1+m - h_r\lambda) P_{nK+r}(\lambda) - m P_{nK+r-1}(\lambda).
            \end{align}
            
            In order to get rid of the last occurrence of $m$ in equation above, we introduce $\Tilde{P}_t(\lambda) \triangleq \frac{1}{\left(\sqrt{m}\right)^t}P_t(\lambda)$.
            
            This way, the above can be written
            \begin{align}
                \Tilde{P}_{-1}(\lambda) & ~ = \sqrt{m} \left(1 - \dfrac{h_0\lambda}{1+m}\right) = \dfrac{2m}{1+m}\sigma_0(\lambda), \\
                \Tilde{P}_0(\lambda) & ~ = 1, \\
                \forall n\geq0, r\in\llbracket 0, K-1 \rrbracket, ~~ \Tilde{P}_{nK+r+1}(\lambda) &~ = \dfrac{1+m - h_r\lambda}{\sqrt{m}} \Tilde{P}_{nK+r}(\lambda) - \Tilde{P}_{nK+r-1}(\lambda).
            \end{align}
            
            In the following, we want to determine a formulation for the polynomials $\Tilde{P}_{nK}$.
            In order to do so, we introduce the following operator:
            
            \begin{equation}\label{eq:matrix_equality}
                A(\lambda) \triangleq 
                \begin{pmatrix}
                    \dfrac{1+m - h_{K-1}\lambda}{\sqrt{m}} & -1 \\
                    1 & 0
                \end{pmatrix}
                \cdots
                \begin{pmatrix}
                    \dfrac{1+m - h_0\lambda}{\sqrt{m}} & -1 \\
                    1 & 0
                \end{pmatrix}
                \triangleq
                \begin{pmatrix}
                    a(\lambda) & b(\lambda) \\
                    c(\lambda) & d(\lambda)
                \end{pmatrix}
            \end{equation}
            
            as well as the scalar valued function
            
            \begin{equation}
                \sigma(\lambda;\{h_i\}, m) \triangleq \frac{1}{2}\mathrm{Tr}(A(\lambda))\,.\label{eq:sigma_def_via_trace}
            \end{equation}
            
            This operator comes naturally in
            
            \begin{align}
                \begin{pmatrix}
                    \Tilde{P}_{(n+1)K}(\lambda) \\
                    \Tilde{P}_{(n+1)K-1}(\lambda)
                \end{pmatrix}
                & ~ = 
                \begin{pmatrix}
                    \dfrac{1+m - h_{K-1}\lambda}{\sqrt{m}} & -1 \\
                    1 & 0
                \end{pmatrix}
                \begin{pmatrix}
                    \Tilde{P}_{(n+1)K-1}(\lambda) \\
                    \Tilde{P}_{(n+1)K-2}(\lambda)
                \end{pmatrix} \\
                & ~ = 
                \begin{pmatrix}
                    \dfrac{1+m - h_{K-1}\lambda}{\sqrt{m}} & -1 \\
                    1 & 0
                \end{pmatrix}
                \cdots
                \begin{pmatrix}
                    \dfrac{1+m - h_{0}\lambda}{\sqrt{m}} & -1 \\
                    1 & 0
                \end{pmatrix}
                \begin{pmatrix}
                    \Tilde{P}_{nK}(\lambda) \\
                    \Tilde{P}_{nK-1}(\lambda)
                \end{pmatrix} \\
                & ~ = A(\lambda)
                \begin{pmatrix}
                    \Tilde{P}_{nK}(\lambda) \\
                    \Tilde{P}_{nK-1}(\lambda)
                \end{pmatrix}\,.
            \end{align}
            
            Looking $K$ steps at a time makes the analysis much easier as the process applying $K$ steps is then homogeneous (we apply $A$ and $A$ doesn't depend on the index of the iterate).
            
            \begin{align}
                \Tilde{P}_{(n+1)K}(\lambda) & ~ = a(\lambda) \Tilde{P}_{nK}(\lambda) + b(\lambda) \Tilde{P}_{nK-1}(\lambda), \\
                \Tilde{P}_{(n+1)K-1}(\lambda) & ~ = c(\lambda) \Tilde{P}_{nK}(\lambda) + d(\lambda) \Tilde{P}_{nK-1}(\lambda).
            \end{align}
            
            Combining the two above equations (First one with incremented $n$ + $b(\lambda)$ times the second one - $d(\lambda)$ times the first one) leads to
            \begin{align}
                \Tilde{P}_{(n+2)K}(\lambda) & ~ = (a(\lambda) + d(\lambda))\Tilde{P}_{(n+1)K}(\lambda) - (a(\lambda)d(\lambda)-b(\lambda)c(\lambda)) \Tilde{P}_{nK}(\lambda) \\
                & ~ = 2\sigma(\lambda;\{h_i\}, m) \Tilde{P}_{(n+1)K}(\lambda) - \Tilde{P}_{nK}(\lambda) \label{eq:tchebyshev_like_k_recursion}
            \end{align}
            where the second inequality is deduced after we recognize
            \begin{equation}
                a(\lambda) + d(\lambda) = \mathrm{Tr}(A(\lambda)) = 2\sigma(\lambda;\{h_i\}, m)
            \end{equation}
            and
            \begin{equation}
                a(\lambda)d(\lambda) - b(\lambda)c(\lambda) = \mathrm{Det}(A(\lambda)) = 1
            \end{equation}
            ($A(\lambda)$ is the product of matrices of determinant 1).
            
            In equation~\eqref{eq:tchebyshev_like_k_recursion} we recognize the recursion verified by e.g. $(T_n(\sigma(\lambda;\{h_i\}, m)))_{n\in\mathbb{N}}$, or $(U_n(\sigma(\lambda;\{h_i\}, m)))_{n\in\mathbb{N}}$, where $T_n$ (resp. $U_n$) denotes the first (resp. second) type Chebyshev polynomial of degree $n$.
            
            Moreover we verify the initialization
            \begin{align}
                \Tilde{P}_0(\lambda) & ~ = 1, \\
                \Tilde{P}_K(\lambda) & ~ = a(\lambda)\Tilde{P}_0(\lambda) + b(\lambda)\Tilde{P}_{-1}(\lambda) \\
                & ~ = a(\lambda) + b(\lambda) \dfrac{m}{1+m}\dfrac{1+m - h_{0}\lambda}{\sqrt{m}}.
            \end{align}
            
            We also notice that
            \begin{equation}
                U_n(\sigma(\lambda;\{h_i\}, m)) + \left( b(\lambda) \dfrac{m}{1+m}\dfrac{1+m - h_{0}\lambda}{\sqrt{m}} - d(\lambda) \right)U_{n-1}(\sigma(\lambda;\{h_i\}, m))
            \end{equation}
            verifies the same recursion of order 2 than $\Tilde{P}_{Kn}$ as well as the same 2 initial terms.
            
            Finally, we conclude
            \begin{equation}
                \Tilde{P}_{nK}(\lambda) = U_n(\sigma(\lambda;\{h_i\}, m)) + \left( b(\lambda) \dfrac{m}{1+m}\dfrac{1+m - h_{0}\lambda}{\sqrt{m}} - d(\lambda) \right)U_{n-1}(\sigma(\lambda;\{h_i\}, m))
            \end{equation}
            
            and
            
            \begin{equation}
                P_{nK}(\lambda) = \left(\sqrt{m}\right)^{nK} \Tilde{P}_{nK}(\lambda)\,.
            \end{equation}
            
            Now we have the full expression of the polynomials associated to algorithm $\ref{algo:cyclical_heavy_ball}$.
            Then we can study it's rate of convergence.
            
            Note for any $r\in\llbracket 0, K-1\rrbracket$, we can have a similar expression of the form
            \begin{equation}
                P_{nK+r}(\lambda) = \left(\sqrt{m}\right)^{nK} \left(Q_r^{1}(\lambda)U_n(\sigma(\lambda;\{h_i\}, m)) + Q_r^{2}(\lambda)U_{n-1}(\sigma(\lambda;\{h_i\}, m)) \right)
            \end{equation}
            with $Q_r^{1}$ and $Q_r^{2}$ some fixed polynomials.
            This is the consequence of the fact that all sequences $\Tilde{P}_{nK+r}(\lambda)$ verify the same recursion formula. Only initialization are different.
            
            In order to study the factor rate of this algorithm, let's first introduce $M$ an upper bound of all the $|Q_r^{i}|$. For instance, let $M$ defined as follow.
            \begin{equation}
                M = \underset{r\in\llbracket0, K-1\rrbracket, i\in\lbrace 1, 2\rbrace}{\max}\underset{\lambda\in\Lambda}{\sup}|Q_r^{i}(\lambda)|.
            \end{equation}
            
            Then,
            \begin{align}
                \|x_{t} - x_*\| & ~ \leq \underset{\lambda \in \Lambda}{\sup} |P_{t}(\lambda)| \|x_0 - x_*\| \\
                & ~ \leq M \left(\sqrt{m}\right)^{t} \left(\underset{\lambda \in \Lambda}{\sup} |U_n(\sigma(\lambda;\{h_i\}, m))| + \underset{\lambda \in \Lambda}{\sup} |U_{n-1}(\sigma(\lambda;\{h_i\}, m))|\right)\|x_0 - x_*\|,
            \end{align}
            with $n=\lfloor\frac{t}{K}\rfloor$.
            
            Set $\sigma_{\sup} \triangleq \underset{\lambda \in \Lambda}{\sup} |\sigma(\lambda;\{h_i\}, m)|$. The worst-case rate verifies
            \begin{align}
                \text{If } \sigma_{\sup} \leq 1, & \text{ then } r_t \leq M \left(\sqrt{m}\right)^{t} \left(n + 1 + n \right) = O\left(t\left(\sqrt{m}\right)^{t}\right). \label{eq:worst_case_rate_analysis_robust_region}\\
                \text{If } \sigma_{\sup} > 1, & \text{ then } r_t = O\left( \left(\sqrt{m}\right)^{t} \left( \sigma_{\sup} + \sqrt{\sigma_{\sup}^2 - 1} \right)^{n}\right).
            \end{align}
            The first case analysis comes from the fact that $U_n$ is bounded by $n+1$ on $[-1, 1]$, while the second cases analysis comes from the fact that $U_n(x)$ grows exponentially fast outside of $[-1, 1]$ at a rate $x+\sqrt{x^2-1}$.
            
            Then the factor rate verifies
            \begin{align}
                \text{If } \sigma_{\sup} \leq 1, & ~ 1-\tau = \sqrt{m}.\\
                \text{If } \sigma_{\sup} > 1, & ~ 1-\tau = \sqrt{m} \left( \sigma_{\sup} + \sqrt{\sigma_{\sup}^2 - 1} \right)^{1/K}.
            \end{align}
            
            It remains to notice that $\sqrt{m} \left( \sigma_{\sup} + \sqrt{\sigma_{\sup}^2 - 1} \right)^{1/K} < 1$ is equivalent to $\sigma_{\sup} < \frac{1+m^k}{2\left(\sqrt{m}\right)^k}$.
            
        \end{proof}
        
        From this factor rate analysis, we can state Proposition~\ref{prop:sequence_stepsizes} of Section~\ref{ssec:resulting_algo}.
        
        \sequencestepsizes*
        
        \begin{proof}
            For now we don't assume assumption~\ref{eq:equation_sequence_stepsizes} yet.
            Set $\sigma_0 \triangleq \sigma(0;\{ h_i\}, m)$.
            Then, by definition~(\ref{eq:sigma_stepsize}) of $\sigma(\lambda;\{ h_i\}, m)$,
            \begin{equation}
                \sigma_0 = \frac{1}{2} \mathrm{Tr} \left(
                \begin{bmatrix}
                    \frac{1+m}{\sqrt{m}}   & -1 \\ 
                    1                           &  0 
                \end{bmatrix}^K
                \right)
                = T_K\left(\frac{1+m}{2\sqrt{m}}\right)
                = \frac{1+m^K}{2\left(\sqrt{m}\right)^K}.
            \end{equation}
            Hence, reversing this equality,
            \begin{equation}\label{eq:sqrt_m_from_sigma_0}
                \sqrt{m} = \left(\sigma_0 - \sqrt{\sigma_0^2-1}\right)^{\frac{1}{K}}.
            \end{equation}
            
            From Theorem~\ref{thm:general_rate_convergence}, we therefore know
            \begin{align}
                \text{If } \sigma_{\sup} \leq 1, & ~ 1-\tau = \left(\sigma_0 - \sqrt{\sigma_0^2-1}\right)^{\frac{1}{K}}.\\
                \text{If } \sigma_{\sup} > 1, & ~ 1-\tau = \left(\sigma_0 - \sqrt{\sigma_0^2-1}\right)^{\frac{1}{K}} \left( \sigma_{\sup} + \sqrt{\sigma_{\sup}^2 - 1} \right)^{1/K}.
            \end{align}
            
            But, one can check that
            
            \begin{equation}
                \left(\sigma_0 - \sqrt{\sigma_0^2-1}\right)^{\frac{1}{K}} \left( \sigma_{\sup} + \sqrt{\sigma_{\sup}^2 - 1} \right)^{1/K} \geq \left(\frac{\sigma_0}{\sigma_{\sup}} - \sqrt{\left(\frac{\sigma_0}{\sigma_{\sup}}\right)^2-1}\right)^{\frac{1}{K}}
            \end{equation}
            
            which shows that a tuning generating the polynomial $\frac{\sigma(\lambda;\{ h_i\}, m)}{\sigma_{\sup}}$ would lead to a better convergence rate.
            Hence, we should look for polynomials $\sigma(\lambda;\{ h_i\}, m)$ verifying $\sigma_{\sup} \leq 1$.
            And then,
            \begin{equation}
                1 - \tau = \sqrt{m} = \left(\sigma_0 - \sqrt{\sigma_0^2-1}\right)^{\frac{1}{K}}.
            \end{equation}
            which explain we aim at maximizing $\sigma_0$ subject to $\sigma_{\sup} \leq 1$ (\eqref{eq:optimal_sigma}).
            
            Finally, we proved \textbf{1)}: if $\sigma(\lambda;\{ h_i\}, m) = \sigma_K^{\Lambda}(\lambda)$, then the tuning is optimal in the sense that this is the one that minimizes the asymptotic rate factor among all $K$ steps-sizes based tuning.
            
            From now, we assume
            \begin{equation}
                \sigma(\lambda;\{ h_i\}, m) = \sigma_K^{\Lambda}(\lambda).
            \end{equation}
            
            Therefore,
            \begin{equation}
                \sigma_0 = \sigma_K^{\Lambda}(0)
            \end{equation}
            and \textbf{2)} is already proven above.
            
            \textbf{3)} follows directly from the definition of $\sigma_K^{\Lambda}(\lambda)$.
            
            Finally, since $\sigma_{\sup} \leq 1$, we know
            \begin{equation}
                1-\tau = \sqrt{m} = \left(\sigma_0 - \sqrt{\sigma_0^2-1}\right)^{1/K}
            \end{equation}
            which proves part of \textbf{4)}.
            
            To prove the expression of the worst-case rate $r_t$, we need to apply the intermediate result~\eqref{eq:worst_case_rate_analysis_robust_region} instead of Theorem~\ref{thm:general_rate_convergence}.

        \end{proof}
    
    \subsection{Example: alternating step-sizes (\texorpdfstring{$K=2$}{K=2})}\label{apx:case-of-a-2-step-sizes-cycle}

        \begin{Prop}\label{prop:2interval_opt_iif_same_size}
            The strategy with 2 step-sizes is optimal on the union of two intervals if and only if they have the same length.
        \end{Prop}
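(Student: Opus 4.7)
The plan is to combine the polynomial characterization of the 2-step cyclical heavy ball (\Cref{thm:rate_factor_alternating_hb}, \Cref{thm:cyclical_chebyshev_2_steps}), which associates any such method with the degree-two polynomial $\sigma_2(\lambda) = \tfrac{1}{2m}(1+m-\lambda h_0)(1+m-\lambda h_1) - 1$, with the equioscillation-based optimality criterion of \Cref{thm:optimality_from_equioscillation}.

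For the sufficient direction ($\Leftarrow$), if the two intervals have the same length, then \Cref{prop:lambda_union_two_interval} identifies the minimax polynomial of degree $2n$ on $\Lambda$ as $T_n(\sigma_2^{\Lambda})/T_n(\sigma_2^{\Lambda}(0))$, which is precisely the residual polynomial at iteration $2n$ produced by Algorithm~\ref{algo:alternating_heavy_ball} (\Cref{thm:cyclical_chebyshev_2_steps}). Hence the 2-step heavy ball achieves the minimax rate and is optimal.

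For the necessary direction ($\Rightarrow$), suppose the 2-step cyclical heavy ball with parameters $(h_0, h_1, m)$ is optimal on $\Lambda$. Then $T_n(\sigma_2)/T_n(\sigma_2(0))$ is minimax on $\Lambda$ for all $n$, and \Cref{thm:optimality_from_equioscillation} forces $\overline{\Lambda} = \sigma_2^{-1}([-1,1])$. The crucial geometric observation is that $\sigma_2$ has positive leading coefficient $h_0 h_1/(2m)$, so it is an upward parabola with a unique vertex at $\lambda_* = (h_0+h_1)(1+m)/(2h_0 h_1)$. Its preimage of $[-1, 1]$ is then symmetric about $\lambda_*$; when the vertex value is below $-1$ (which is forced by $\Lambda$ being a disjoint union of two nondegenerate intervals), this preimage is a union of two intervals $[r_1, s_1] \cup [s_2, r_2]$ with $r_1 + r_2 = s_1 + s_2 = 2\lambda_*$, and the reflection symmetry yields $s_1 - r_1 = r_2 - s_2$. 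Identifying this with $[\mu_1,L_1]\cup[\mu_2,L_2]$ gives $L_1 - \mu_1 = L_2 - \mu_2$.

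The main obstacle I anticipate is ensuring the equality (and not merely the inclusion) in $\overline{\Lambda} = \sigma_2^{-1}([-1,1])$. This is what the stronger direction of \Cref{thm:optimality_from_equioscillation} provides, at the price of requiring minimality of the residual polynomial for all iteration indices $n$, as opposed to only the asymptotic rate within the class of 2-step methods. Conceptually, the proof relies on the rigid parabolic shape of $\sigma_2$, which leaves only symmetric two-interval supports compatible with optimality.
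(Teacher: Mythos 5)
Your proof is correct and takes essentially the same route as the paper's: both arguments pivot on \Cref{thm:optimality_from_equioscillation} and then exploit the reflection symmetry of a degree-two polynomial about its vertex (the paper writes $\sigma_2^\Lambda(x)=a(x-b)^2+c$ and forces $b$ to coincide with both interval midpoints, while you phrase the same fact as the symmetry of $\sigma_2^{-1}([-1,1])$ about the vertex $\lambda_*$). Your handling of the sufficiency direction via \Cref{prop:lambda_union_two_interval} and \Cref{thm:cyclical_chebyshev_2_steps} is a bit more explicit than the paper's one-line ``this is feasible if and only if,'' but the underlying mathematics is identical.
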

        
        \begin{proof}
            This is a direct consequence of Theorem \ref{thm:optimality_from_equioscillation}, which implies $\sigma_2^{\Lambda}(\muone) = \sigma_2^{\Lambda}(\Ltwo) = 1$ and $\sigma_2^{\Lambda}(\mutwo) = \sigma_2^{\Lambda}(\Lone) = -1$.
            
            This is feasible if and only if $\Ltwo - \mutwo = \Lone - \muone$ since $\sigma_2^{\Lambda}$ is a degree 2 polynomial.
            
            Indeed, set $\sigma_2^{\Lambda}(x) = a(x - b)^2 + c$.
            Then, $\sigma_2^{\Lambda}(\muone) = \sigma_2^{\Lambda}(\Ltwo)$ implies $a(\muone - b)^2 + c = a(\Ltwo - b)^2 + c$, then $|\muone - b| = |\Ltwo - b|$ and finally $b = \frac{\muone + \Ltwo}{2}$.
            Similarly, $\sigma_2^{\Lambda}(\mutwo) = \sigma_2^{\Lambda}(\Lone)$ implies $b = \frac{\mutwo + \Lone}{2}$.
            
            We conclude $\frac{\muone + \Ltwo}{2}=\frac{\mutwo + \Lone}{2}$, and $\Ltwo - \mutwo = \Lone - \muone$.
        \end{proof}
        
        \lambdauniontwointerval*
        
        \begin{proof}
            From Theorem \ref{thm:optimality_from_equioscillation}, 
            \begin{align}
                \sigma_2^{\Lambda}(\muone) & ~ = 1, \\
                \sigma_2^{\Lambda}(\Lone) & ~ = -1, \\
                \sigma_2^{\Lambda}(\mutwo) & ~ = -1, \label{eq:sigma_2_reaches_m1} \\
                \sigma_2^{\Lambda}(\Ltwo) & ~ = 1,
            \end{align}
            and this implies that $\frac{T_n\left( \sigma_2^{\Lambda}(\lambda) \right)}{T_n\left(\sigma_2^{\Lambda}(0)\right)}$ is optimal.
            
            In particular, $\Lone$ and $\mutwo$ are roots of $\sigma_2^{\Lambda}+1$.
            Therefore, we know there exists a constant $c$ such that $\sigma_2^{\Lambda}(\lambda) = c(1 - \frac{\lambda}{\Lone})(1 - \frac{\lambda}{\mutwo}) - 1$.
            Moreover, evaluating this in $\muone$ gives $\sigma_2^{\Lambda}(\muone) = c(1 - \frac{\muone}{\Lone})(1 - \frac{\muone}{\mutwo}) - 1 = 1$, so 
            \begin{align}
                c & ~ = \frac{2}{(1 - \frac{\muone}{\Lone})(1 - \frac{\muone}{\mutwo})} \\
                & ~ = \frac{2\Lone\mutwo}{(\Lone - \muone)(\mutwo - \muone)} \\
                & ~ = 2\frac{\left(\frac{\muone + \Ltwo}{2}\right)^2 - R^2\left(\frac{\Ltwo-\muone}{2}\right)^2}{\frac{1-R^2}{4}(\Ltwo - \muone)^2} \\
                & ~ = 2 \frac{\rho^2 - R^2}{1 - R^2}.
            \end{align}
            
            Then,
            \begin{equation}
                \sigma_2^{\Lambda}(\lambda) = 2 \frac{\rho^2 - R^2}{1 - R^2}(1 - \frac{\lambda}{\Lone})(1 - \frac{\lambda}{\mutwo}) - 1
            \end{equation}
            
            which can be written
            \begin{equation}
                \sigma_2^{\Lambda}(\lambda)= 2\left(\frac{1+m}{2\sqrt{m}}\right)^2\left(1 - \frac{\lambda}{\Lone}\right)\left(1 - \frac{\lambda}{\mutwo}\right) - 1
            \end{equation}
            
            with $\left(\frac{1+m}{2\sqrt{m}}\right)^2 = \frac{\rho^2 - R^2}{1 - R^2}$.
            Finally, $m = \left(\frac{\sqrt{\rho^2 - R^2} - \sqrt{\rho^2 - 1}}{\sqrt{1 - R^2}}\right)^2$.
        \end{proof}
        
        \ratefactoralternatinghb*
        
        \begin{proof}
            From Theorem \ref{thm:general_rate_convergence} applied to $K=2$, we immediately have the above result with
            \begin{equation*}
                \sigma_{\sup} = \underset{\lambda\in\Lambda}{\sup}\left| 2\left(\frac{1+m - \lambda h_0}{2\sqrt{m}}\right)\left(\frac{1+m - \lambda h_1}{2\sqrt{m}}\right) - 1 \right|.
            \end{equation*}
            To conclude the proof, we need to prove that the optimal value of $|\sigma_2^{\Lambda}|$ can only be reached on $\left\{ \muone, \Lone, \mutwo, \Ltwo, (1+m)\frac{h_0 + h_1}{2 h_0 h_1} \right\}$.
            Indeed, $\sigma_2^{\Lambda}$ being convex, its maximal value can only be reached on $\left\{ \muone, \Ltwo \right\}$.
            Its minimal value is reached on $(1+m)\frac{h_0 + h_1}{2 h_0 h_1}$.
            Therefore, over $\Lambda$, the minimal value of $\sigma_2^{\Lambda}$ is reached on $(1+m)\frac{h_0 + h_1}{2 h_0 h_1}$ if the latest belongs to $\Lambda$. Otherwise, its minimal value is reached to the closest point in $\Lambda$ to $(1+m)\frac{h_0 + h_1}{2 h_0 h_1}$, namely, it can be any point of $\left\{ \muone, \Lone, \mutwo, \Ltwo \right\}$.
        \end{proof}
        
        \begin{Prop}[Residual polynomial in the robust region] \label{prop:nice_formulation_for_2_steps_HB_with_optimal_param}
            Assuming $\sigma_2^{\Lambda}(\lambda) \geq -1, \quad \forall\lambda\in\Lambda$,
            the residual polynomial associated with the cyclical heavy ball algorithm is
            \begin{align}
                P_{2n}(\lambda) = m^n & \left[ \frac{2m}{1+m}\,T_{2n}\left(\sqrt{\left(\frac{1+m - \lambda h_0}{2\sqrt{m}}\right)\left(\frac{1+m - \lambda h_1}{2\sqrt{m}}\right)}\right) \right. \nonumber \\
                & + \left. \frac{1-m}{1+m}\,U_{2n}\left(\sqrt{\left(\frac{1+m - \lambda h_0}{2\sqrt{m}}\right)\left(\frac{1+m - \lambda h_1}{2\sqrt{m}}\right)}\right) \right].
            \end{align}
        \end{Prop}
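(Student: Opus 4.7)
The plan is to start from the explicit expression for $\tilde{P}_{nK}(\lambda)$ that was derived inside the proof of Theorem~\ref{thm:general_rate_convergence}, specialize it to $K=2$, and then convert the result to the claimed form using Chebyshev doubling identities. That earlier proof already established $\tilde{P}_{2n}(\lambda) = U_n(\sigma) + \alpha(\lambda)\,U_{n-1}(\sigma)$ with $\sigma = \sigma_2^{\Lambda}(\lambda)$ and $\alpha(\lambda) = b(\lambda)\,\tfrac{m}{1+m}\,\tfrac{1+m-h_0\lambda}{\sqrt{m}} - d(\lambda)$, and by definition $P_{2n} = m^n\,\tilde{P}_{2n}$, so the whole task reduces to a purely algebraic rewriting.

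First, I would compute the matrix $A(\lambda) = M_1 M_0$ entry-wise to read off $b(\lambda) = -(1+m-h_1\lambda)/\sqrt{m}$ and $d(\lambda) = -1$. Substituting into the formula for $\alpha$ and using $(1+m-h_0\lambda)(1+m-h_1\lambda) = 2m(\sigma+1)$ gives, after simplification, $\alpha(\lambda) = 1 - \tfrac{(1+m-h_0\lambda)(1+m-h_1\lambda)}{1+m} = \tfrac{1-m-2m\sigma}{1+m}$, symmetric in $h_0,h_1$ as expected since $P_{2n}$ itself must be.

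Second, I would introduce the auxiliary variable $y(\lambda) = \sqrt{\bigl(\tfrac{1+m-h_0\lambda}{2\sqrt{m}}\bigr)\bigl(\tfrac{1+m-h_1\lambda}{2\sqrt{m}}\bigr)}$, so that $\sigma = 2y^2 - 1$. The hypothesis $\sigma_2^{\Lambda}(\lambda) \geq -1$ on $\Lambda$ is exactly $y^2 \geq 0$, which is what makes $y$ real on $\Lambda$. The Chebyshev doubling identities
\begin{equation*}
    U_n(2y^2 - 1) = \frac{U_{2n+1}(y)}{2y}, \qquad U_{n-1}(2y^2 - 1) = \frac{U_{2n-1}(y)}{2y},
\end{equation*}
which follow in one line from $U_k(\cos\phi) = \sin((k+1)\phi)/\sin\phi$ applied with $\phi = 2\theta$, convert the expression into $\tilde{P}_{2n}(\lambda) = \tfrac{1}{2y}\bigl[U_{2n+1}(y) + \tfrac{1+m-4my^2}{1+m}\,U_{2n-1}(y)\bigr]$.

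Third, I would bring this into the target form using two further Chebyshev identities: $U_{2n+1}(y) + U_{2n-1}(y) = 2y\,U_{2n}(y)$ (from $\sin((2n+2)\theta) + \sin(2n\theta) = 2\sin((2n+1)\theta)\cos\theta$), and $U_{2n}(y) - T_{2n}(y) = y\,U_{2n-1}(y)$ (the standard instance of $U_k - T_k = yU_{k-1}$). Splitting the coefficient as $\tfrac{1+m-4my^2}{1+m} = 1 - \tfrac{4my^2}{1+m}$, the first identity collapses the bracket to $2y\,U_{2n}(y) - \tfrac{4my^2}{1+m}U_{2n-1}(y)$, and dividing by $2y$ and applying the second identity yields $\tilde{P}_{2n}(\lambda) = \tfrac{1-m}{1+m}U_{2n}(y) + \tfrac{2m}{1+m}T_{2n}(y)$. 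Multiplying by $m^n$ then recovers the announced formula. The main obstacle is purely bookkeeping, namely tracking the factor $1/(2y)$ generated by the doubling identities and recombining the $U_{2n\pm 1}$ terms into the clean $(T_{2n},U_{2n})$ pair; the robust-region assumption $\sigma_2^{\Lambda} \geq -1$ is used only to ensure the change of variable $\sigma \mapsto y$ is pointwise real on $\Lambda$.
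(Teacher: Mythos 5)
Your proof is correct, and it takes a genuinely different route from the one in the paper. The paper's proof of Proposition~\ref{prop:nice_formulation_for_2_steps_HB_with_optimal_param} re-derives the three-term recursion for $\tilde P_{2n}$ from scratch in the $K=2$ case, exhibits the two initial terms $\tilde P_0$ and $\tilde P_2$, and then \emph{verifies} that the announced $\tfrac{2m}{1+m}T_{2n}(y)+\tfrac{1-m}{1+m}U_{2n}(y)$ satisfies the same second-order recursion with the same two initial values, concluding by uniqueness. Your approach is constructive instead: you start from the closed form $\tilde P_{nK}(\lambda)=U_n(\sigma)+\bigl(b\,\tfrac{m}{1+m}\tfrac{1+m-h_0\lambda}{\sqrt m}-d\bigr)U_{n-1}(\sigma)$ already established inside the proof of Theorem~\ref{thm:general_rate_convergence}, read off $b(\lambda)=-\tfrac{1+m-h_1\lambda}{\sqrt m}$ and $d(\lambda)=-1$ from the explicit $2\times 2$ product, and then convert basis from $\{U_n(\sigma),U_{n-1}(\sigma)\}$ to $\{T_{2n}(y),U_{2n}(y)\}$ via the half-angle identity $U_n(2y^2-1)=U_{2n+1}(y)/(2y)$ together with the recurrence $U_{2n+1}+U_{2n-1}=2yU_{2n}$ and $U_{2n}-T_{2n}=yU_{2n-1}$. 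I checked the algebra (including $\alpha=\tfrac{1+m-4my^2}{1+m}$ and the cancellation of the $1/(2y)$ factor) and it is correct. What the paper's route buys is self-containedness of the $K=2$ case, at the cost of re-doing the recursion derivation; what your route buys is that it actually shows \emph{where} the $T/U$ mixture and the coefficients $\tfrac{2m}{1+m},\tfrac{1-m}{1+m}$ come from, rather than pulling the answer out of thin air and checking it. You also make explicit the role of the robust-region hypothesis (reality of $y$), which the paper only states in the follow-up remark.

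One small point worth flagging: the formula from Theorem~\ref{thm:general_rate_convergence} that you take as your starting point is proved in the paper by the same ``same recursion and same two initial terms'' argument, so at a global level the inductive verification has not been avoided, only relocated. That is perfectly legitimate since you are allowed to cite earlier results, but it means the two proofs are closer in spirit than they first appear: yours effectively factors the paper's verification into a general-$K$ step already done plus a purely algebraic Chebyshev rewriting, whereas the paper redoes the verification ad hoc for $K=2$ in the convenient variable $y$ directly.
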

        
        \begin{Rem}
            The assumption $\sigma_2(\lambda) \geq -1, \quad \forall\lambda\in\Lambda$ is verified in the robust region, and is useful here because the term $\left(\frac{1+m - \lambda h_0}{2\sqrt{m}}\right)\left(\frac{1+m - \lambda h_1}{2\sqrt{m}}\right)$ is equal to $\frac{1+\sigma_2(\lambda)}{2}$ and must be positive to make the above expression well defined. Otherwise the result can hold replacing the square root with some complex number, but it  brings no value when we derive the convergence rate from it.
        \end{Rem}
        
        \begin{proof}
            This proof reuses elements of the proof of Theorem \eqref{thm:general_rate_convergence}, especially Equation \eqref{eq:tchebyshev_like_k_recursion}.
            For sake of completeness and simplicity, we prove this result again directly in the special case $K=2$.
            
            We first recall the recursion of Algorithm \ref{algo:cyclical_heavy_ball} for $K=2$.
            For sake of simplicity, we directly projet it onto the eigenspace associated to the eigenvalue $\lambda$ of the Hessian of the objective function.
            \begin{equation}
                \begin{array}{rl}
                    x_{2n+1} - x_* & = (1+m-h_0\lambda)(x_{2n} - x_*) - m (x_{2n-1} - x_*). \\
                    x_{2n+2} - x_* & = (1+m-h_1\lambda)(x_{2n+1} - x_*) - m (x_{2n} - x_*).
                \end{array}
            \end{equation}
            Identifying $x_t - x_* = P_t(\lambda)(x_0 - x_*)$ and $P_t(\lambda) = \left(\sqrt{m}\right)^t\Tilde{P}_t(\lambda)$,
            \begin{equation}
                \begin{array}{rl}
                    \Tilde{P}_{2n+1}(\lambda) & = \frac{1+m-h_0\lambda}{\sqrt{m}}\Tilde{P}_{2n}(\lambda) - \Tilde{P}_{2n-1}(\lambda), \\
                    \Tilde{P}_{2n+2}(\lambda) & = \frac{1+m-h_1\lambda}{\sqrt{m}}\Tilde{P}_{2n+1}(\lambda) - \Tilde{P}_{2n}(\lambda).
                \end{array}
            \end{equation}
            Multiplying the first equation by $\frac{1+m-h_1\lambda}{\sqrt{m}}$ and replacing $\frac{1+m-h_1\lambda}{\sqrt{m}}\Tilde{P}_{2n+1}(\lambda)$ and $\frac{1+m-h_1\lambda}{\sqrt{m}}\Tilde{P}_{2n-1}(\lambda)$ accordingly to the second equation leads to
            \begin{equation}
                \Tilde{P}_{2n+2}(\lambda) + \Tilde{P}_{2n}(\lambda) = \frac{1+m-h_0\lambda}{\sqrt{m}}\frac{1+m-h_1\lambda}{\sqrt{m}}\Tilde{P}_{2n}(\lambda) - \left(\Tilde{P}_{2n}(\lambda) + \Tilde{P}_{2n-2}(\lambda)\right)
            \end{equation}
            which can be written as in equation \eqref{eq:tchebyshev_like_k_recursion}
            \begin{equation}
                \Tilde{P}_{2n+2}(\lambda)= \left(\frac{1+m-h_0\lambda}{\sqrt{m}}\frac{1+m-h_1\lambda}{\sqrt{m}}-2\right)\Tilde{P}_{2n}(\lambda) - \Tilde{P}_{2n-2}(\lambda).
            \end{equation}
            Moreover,
            \begin{equation}
                \begin{array}{rl}
                    x_{1} - x_* & = (1-\frac{h_0}{1+m}\lambda)(x_{0} - x_*), \\
                    x_{2} - x_* & = (1+m-h_1\lambda)(x_{1} - x_*) - m (x_{0} - x_*),
                \end{array}
            \end{equation}
            leading to the initialization
            \begin{equation}
                \begin{array}{rl}
                    \Tilde{P}_{1}(\lambda) & = \frac{1}{\sqrt{m}}(1-\frac{h_0}{1+m}\lambda)\Tilde{P}_{0}(\lambda), \\
                    \Tilde{P}_{2}(\lambda) & = \frac{1+m-h_1\lambda}{\sqrt{m}}\Tilde{P}_{1}(\lambda) - \Tilde{P}_{0}(\lambda).
                \end{array}
            \end{equation}
            hence,
            \begin{equation}
                \Tilde{P}_{2}(\lambda) = \left(\frac{1}{1+m}\frac{1+m-h_0\lambda}{\sqrt{m}}\frac{1+m-h_1\lambda}{\sqrt{m}}-1\right)
            \end{equation}
            and recall
            \begin{equation}
                \Tilde{P}_{0}(\lambda) = 1.
            \end{equation}
            
            It remains to notice that
            
            \begin{align}
                & \frac{2m}{1+m}T_{2n}\left(\sqrt{\left(\frac{1+m - \lambda h_0}{2\sqrt{m}}\right)\left(\frac{1+m - \lambda h_0}{2\sqrt{m}}\right)}\right) \nonumber \\
                + & \frac{1-m}{1+m}U_{2n}\left(\sqrt{\left(\frac{1+m - \lambda h_0}{2\sqrt{m}}\right)\left(\frac{1+m - \lambda h_0}{2\sqrt{m}}\right)}\right)
            \end{align}
            
            verifies the same recursion as well as the same initialization for $n=0$ and $n=1$. This allows us to identify the 2 sequences of polynomials
            \begin{align}
                \Tilde{P}_{2n}(\lambda) & = \frac{2m}{1+m}T_{2n}\left(\sqrt{\left(\frac{1+m - \lambda h_0}{2\sqrt{m}}\right)\left(\frac{1+m - \lambda h_0}{2\sqrt{m}}\right)}\right) \nonumber \\
                & + \frac{1-m}{1+m}U_{2n}\left(\sqrt{\left(\frac{1+m - \lambda h_0}{2\sqrt{m}}\right)\left(\frac{1+m - \lambda h_0}{2\sqrt{m}}\right)}\right)
            \end{align}
            
            which concludes the proof.
            
        \end{proof}

        \rateconvergenceheavyballalternatingstepsize*
        
        \begin{proof}
            From Proposition~\ref{prop:lambda_union_two_interval}, Algorithm \ref{algo:alternating_heavy_ball}'s parameter make $\sigma(\lambda;\{ h_i\}, m) = \sigma_2^{\Lambda}$. In particular, by definition,
            \begin{equation}
                -1 \leq 2\left(\frac{1+m - \lambda h_0}{2\sqrt{m}}\right)\left(\frac{1+m - \lambda h_1}{2\sqrt{m}}\right) - 1 \leq 1.
            \end{equation}
            and then
            \begin{equation}
                0 \leq \sqrt{\left(\frac{1+m - \lambda h_0}{2\sqrt{m}}\right)\left(\frac{1+m - \lambda h_0}{2\sqrt{m}}\right)} \leq 1.
            \end{equation}
            
            And we know that $\forall x \leq 1$,  $T_n(x)\leq1$ and $U_n(x) \leq n+1$.
            
            Therefore, using optimal parameters, and from Proposition~\ref{prop:nice_formulation_for_2_steps_HB_with_optimal_param}
            \begin{equation}
                \Tilde{P}_{2n}(\lambda) \leq \frac{2m}{1+m} + (2n+1)\frac{1-m}{1+m} = 1 + 2n\frac{1-m}{1+m}.
            \end{equation}
            
            And the worst-case rate is then upper bounded
            \begin{equation}
                r_t = \left(1 + t\frac{1-m}{1+m}\right) \left(\sqrt{m}\right)^{t}
            \end{equation}
            for all $t$ even.
            
            It remains to plug $m$ expression into the above to conclude.
        \end{proof}
        
        Note that in the proof above, all the expressions are symmetric in $(h_0, h_1)$, which implies that swapping those 2 step-sizes doesn't impact this statement.
        
        \begin{Rem}
            The previous statement provides the convergence rate of \Cref{algo:alternating_heavy_ball}. It does not state that this is the optimal way to tune \Cref{algo:cyclical_heavy_ball}, but comparing the obtained rate to the one of \Cref{algo:cyclical_chebyshev_2_steps} does. Another way to derive the optimal parameters, is to start from the result of \Cref{thm:rate_factor_alternating_hb} applied on a 2 step-sizes strategy, or from the result of \Cref{prop:nice_formulation_for_2_steps_HB_with_optimal_param}. This leads to minimizing $m$ under the constraints that $\zeta(\lambda) \triangleq \left(\frac{1+m - \lambda h_0}{2\sqrt{m}}\right)\left(\frac{1+m - \lambda h_1}{2\sqrt{m}}\right)$ has values between 0 and 1 on $\Lambda = [\muone,\,\Lone]\cup[\mutwo,\,\Ltwo]$. By symmetry of $\Lambda$ and the convex parabola $\zeta$, we know that optimal parameters verify $\zeta(\Lone) = \zeta(\mutwo) = 0$. And therefore, $\zeta(\muone) = \zeta(\Ltwo) = 1$ maximizes the range of allowed $m$. This way we recover the tuning of \Cref{algo:alternating_heavy_ball}.
            Note that $\zeta$ is related to $\sigma_2^{(\Lambda)}$ through the relation $\sigma_2^{(\Lambda)} = 2\zeta - 1$, and therefore the 4 mentioned equalities are equivalent to the equioscillation property.
        \end{Rem}
        
        The next theorem sums up the results of Proposition~\ref{prop:asymptotic_rate_small_kappa} and Table~\ref{tab:speedups}.
        
        \begin{Th}[Asymptotic speedup of HB with alternating step-sizes]\label{thm:asymptotic_speed_up_of_HB_wit h_alternating_step_sizes}
        ~
            \begin{enumerate}
                \item Let $R\in [0, 1)$ be a fixed number, then $\sqrt{m} \underset{\kappa\rightarrow 0}{=} 1 - \dfrac{2\sqrt{\kappa}}{\sqrt{1-R^2}} + o(\sqrt{\kappa})$.
                \item Let 
                \[
                    R(\kappa) \underset{\kappa\rightarrow 0}{=} 1 - \dfrac{\sqrt{\kappa}}{2} + o(\sqrt{\kappa}), \qquad \text{i.e.,} \quad \Lambda \approx [\mu, \mu + \frac{\sqrt{\mu L}}{4}] \cup [L - \frac{\sqrt{\mu L}}{4}, L],
                \]
                then $\sqrt{m} \underset{\kappa\rightarrow 0}{=} 1 - 2\sqrt[4]{\kappa} + o(\sqrt[4]{\kappa})$, therefore leasing to a new square root acceleration.
                \item Let 
                \[
                    R(\kappa) \underset{\kappa\rightarrow 0}{=} 1 - 2\gamma\kappa + o(\kappa), \qquad \text{i.e.,} \quad \Lambda \approx [\mu, (1+\gamma)\mu] \cup [L - \gamma \mu, L],
                \]
                then $\sqrt{m} \underset{\kappa\rightarrow 0}{=} \sqrt{1 + \dfrac{1}{\gamma}} - \sqrt{\dfrac{1}{\gamma}} + o(\kappa)$, therefore leading to a constant complexity.
            \end{enumerate}
        \end{Th}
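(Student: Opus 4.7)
}
The plan is to view the three claims as Taylor expansions of the closed-form expression
\begin{equation*}
    \sqrt{m} = \frac{\sqrt{\rho^2-R^2}-\sqrt{\rho^2-1}}{\sqrt{1-R^2}}
\end{equation*}
given by \Cref{cor:rate_convergence_heavy_ball_alternating_step_size} (equivalently \Cref{algo:alternating_heavy_ball}), where the only $\kappa$-dependence comes through $\rho=(1+\kappa)/(1-\kappa)$ and through the assumed scaling of $R=R(\kappa)$. First I would record the elementary expansions $\rho^2-1 = 4\kappa + O(\kappa^2)$ and $\sqrt{\rho^2-1}=2\sqrt{\kappa}+O(\kappa^{3/2})$, which are used in all three cases, and the decomposition $\rho^2-R^2 = (\rho^2-1)+(1-R^2)$ that isolates the $R$-dependence.

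For case 1 ($R$ fixed in $[0,1)$), the quantity $1-R^2$ is a positive constant, so $\sqrt{\rho^2-R^2}=\sqrt{1-R^2}+\frac{2\kappa}{\sqrt{1-R^2}}+O(\kappa^2)$ by a standard square-root expansion. Substituting and dividing by $\sqrt{1-R^2}$ immediately yields $\sqrt{m}=1-\frac{2\sqrt{\kappa}}{\sqrt{1-R^2}}+o(\sqrt{\kappa})$. This already recovers \Cref{prop:asymptotic_rate_small_kappa}.

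For case 2 ($R=1-\sqrt{\kappa}/2+o(\sqrt{\kappa})$), the first step is to establish that $1-R^2=\sqrt{\kappa}+o(\sqrt{\kappa})$ and therefore, using $4\kappa=o(\sqrt{\kappa})$, that $\rho^2-R^2 = \sqrt{\kappa}(1+o(1))$. The ratio $\sqrt{\rho^2-R^2}/\sqrt{1-R^2}$ then simplifies to $\sqrt{1+4\sqrt{\kappa}(1+o(1))}=1+2\sqrt{\kappa}+o(\sqrt{\kappa})$, while the ratio $\sqrt{\rho^2-1}/\sqrt{1-R^2}=2\sqrt{\kappa}/\sqrt[4]{\kappa}\,(1+o(1))=2\sqrt[4]{\kappa}+o(\sqrt[4]{\kappa})$ dominates. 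Subtracting gives $\sqrt{m}=1-2\sqrt[4]{\kappa}+o(\sqrt[4]{\kappa})$. The main delicate point is keeping track of the competing orders $\sqrt{\kappa}$, $\sqrt[4]{\kappa}$ and $1$ in numerator and denominator; I would treat each ratio separately to avoid confusion.

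For case 3 ($R=1-2\gamma\kappa+o(\kappa)$), one has $1-R^2=4\gamma\kappa+o(\kappa)$ and $\rho^2-R^2=4(1+\gamma)\kappa+o(\kappa)$, so $\sqrt{\rho^2-R^2}-\sqrt{\rho^2-1} = 2\sqrt{\kappa}\bigl(\sqrt{1+\gamma}-1\bigr)+o(\sqrt{\kappa})$ and $\sqrt{1-R^2}=2\sqrt{\gamma\kappa}+o(\sqrt{\kappa})$. Dividing gives $\sqrt{m}=\frac{\sqrt{1+\gamma}-1}{\sqrt{\gamma}}+o(1)$, which after rationalization equals $\sqrt{1+1/\gamma}-\sqrt{1/\gamma}+o(1)$. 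I expect this to be the easiest case computationally; the only real check is that the $o(\kappa)$ remainders in the definition of $R$ propagate through the square roots without contaminating the constant term. The principal obstacle throughout is simply bookkeeping of asymptotic orders: the three regimes correspond to $1-R^2$ being of order $1$, $\sqrt{\kappa}$ and $\kappa$ respectively, and each regime produces a qualitatively different leading behavior for $\sqrt{m}$, so the expansions must be carried out independently with enough precision to cancel the leading constant $1$ in the first two cases.
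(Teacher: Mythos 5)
Your proposal is correct and follows essentially the same route as the paper: both start from the closed form $\sqrt{m}=\bigl(\sqrt{\rho^2-R^2}-\sqrt{\rho^2-1}\bigr)/\sqrt{1-R^2}$, exploit the decomposition $\rho^2-R^2=(\rho^2-1)+(1-R^2)$ to rewrite everything in terms of $(\rho^2-1)/(1-R^2)$, and then Taylor-expand in each of the three scaling regimes for $1-R^2$ (order $1$, $\sqrt{\kappa}$, $\kappa$ respectively); the intermediate quantities you compute match the paper's step by step. One small but genuine observation: in case 3 you obtain a remainder $o(1)$, whereas the theorem statement writes $o(\kappa)$ --- under the sole hypothesis $R(\kappa)=1-2\gamma\kappa+o(\kappa)$ the ratio $(\rho^2-1)/(1-R^2)$ can only be pinned down to $1/\gamma+o(1)$, not $1/\gamma+o(\kappa)$, so your $o(1)$ is in fact the correct (and sharpest derivable) remainder and the paper's $o(\kappa)$ is an overstatement.
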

        
        This is summed up in the Table~\ref{tab:speedups_apx}.
        
        \begin{table}[ht!]
            {
            \begin{center}
                {\renewcommand{\arraystretch}{1.8}
                \begin{tabular}{@{}llll@{}}
                \specialrule{2pt}{1pt}{1pt}
                Relative gap $R\;\;$ & Set $\Lambda$ & Rate factor $\tau$ & Speedup $\tau/\tau^{\text{PHB}}$ \\
                \specialrule{2pt}{1pt}{1pt}
                $R\in[0, 1)$ & $[\mu, \mu + \frac{1-R}{2}(L-\mu)]\cup[L - \frac{1-R}{2}(L-\mu), L]$ &  $\frac{2\sqrt{\kappa}}{\sqrt{1-R^2}}$& $(1-R^2)^{-\frac{1}{2}}$ \\ \hline
                $R=1 - \sqrt{\kappa}/2$  & $[\mu, \mu + \frac{\sqrt{\mu L}}{4}] \cup [L - \frac{\sqrt{\mu L}}{4}, L]$  & $2\sqrt[4]{\kappa}$ & $\kappa^{-\frac{1}{4}}$  \\ \hline
                $R=1 - 2\gamma\kappa$ & $[\mu, (1+\gamma)\mu] \cup [L - \gamma \mu, L]$ &  indep. of $\kappa$ &  $O(\kappa^{-\frac{1}{2}})$\\ 
                \specialrule{2pt}{1pt}{1pt}
                \end{tabular}
                \caption{Case study of the convergence of Algorithm \ref{algo:alternating_heavy_ball} as a function of $R$, in the regime where $\kappa \rightarrow 0$. The \textbf{first line} corresponds to a situation where $R$ is independent of $\kappa$, and we observe a constant gain w.r.t. heavy ball. The {\bfseries second line} study a setting in which $R$ depends on $\sqrt{\kappa}$, meaning the two intervals in $\Lambda$ are relatively small. The asymptotic rate reads $(1-2\sqrt[4]{\kappa})^t$, beating the $(1-2\sqrt{\kappa})^t$ lower bound.
                Finally, in the \textbf{third line}, $R$ depends on $\kappa$, the two intervals in $\Lambda$ are so small that the convergence becomes $O(1)$, i.e., is independent of $\kappa$.\vspace{-2ex}}
                \label{tab:speedups_apx}}
            \end{center}}
        \end{table}
        
        \begin{proof}
            ~
            \begin{enumerate}
                \item Let $R\in[0, 1)$. The momentum $m$ satisfies
                \begin{align*}
                    \sqrt{m} & \underset{\kappa\rightarrow 0}{=} \dfrac{\sqrt{1+O(\kappa) - R^2} - \sqrt{4\kappa + O(\kappa^2)}}{\sqrt{1-R^2}} \\
                    & \underset{\kappa\rightarrow 0}{=} \dfrac{\sqrt{1 - R^2} + O(\kappa) - 2\sqrt{\kappa} + O(\kappa)}{\sqrt{1-R^2}} \\
                    & \underset{\kappa\rightarrow 0}{=} 1 - \dfrac{2\sqrt{\kappa}}{\sqrt{1-R^2}} + O(\kappa).
                \end{align*}
                
                \item Let $R(\kappa) \underset{\kappa\rightarrow 0}{=} 1 - \dfrac{\sqrt{\kappa}}{2} + o(\sqrt{\kappa})$. The momentum $m$ verifies
                \begin{align*}
                    \sqrt{m} & = \sqrt{\dfrac{\left( \frac{1+\kappa}{1-\kappa} \right)^2 - R^2}{1 - R^2}} - \sqrt{\dfrac{\left( \frac{1+\kappa}{1-\kappa} \right)^2 - 1}{1 - R^2}} \\
                    & = \sqrt{\dfrac{\left( \frac{1+\kappa}{1-\kappa} \right)^2 - 1}{1 - R^2} + 1} - \sqrt{\dfrac{\left( \frac{1+\kappa}{1-\kappa} \right)^2 - 1}{1 - R^2}}.
                \end{align*}
                We first focus on
                \begin{align*}
                    \dfrac{\left( \frac{1+\kappa}{1-\kappa} \right)^2 - 1}{1 - R^2} & \underset{\kappa\rightarrow 0}{=} \frac{4\kappa + O(\kappa^2)}{\sqrt{\kappa} + o(\sqrt{\kappa})} \\
                    & \underset{\kappa\rightarrow 0}{=} 4\sqrt{\kappa} + o(\sqrt{\kappa}).
                \end{align*}
                Then,
                \begin{align*}
                    \sqrt{m} & = \sqrt{\dfrac{\left( \frac{1+\kappa}{1-\kappa} \right)^2 - 1}{1 - R^2} + 1} - \sqrt{\dfrac{\left( \frac{1+\kappa}{1-\kappa} \right)^2 - 1}{1 - R^2}}\\
                    & \underset{\kappa\rightarrow 0}{=} \sqrt{1 + 4\sqrt{\kappa} + o(\sqrt{\kappa})} - \sqrt{4\sqrt{\kappa}+o(\sqrt{\kappa})} \\
                    & \underset{\kappa\rightarrow 0}{=} 1 + 2\sqrt{\kappa} + o(\sqrt{\kappa}) - 2\sqrt[4]{\kappa} + o(\sqrt[4]{\kappa}) \\
                    & \underset{\kappa\rightarrow 0}{=} 1 - 2\sqrt[4]{\kappa} + o(\sqrt[4]{\kappa}).
                \end{align*}
                
                \item Let $R(\kappa) \underset{\kappa\rightarrow 0}{=} 1 - 2\gamma\kappa + o(\kappa)$. The momentum $m$ verifies
                \begin{align*}
                    \sqrt{m} & = \sqrt{\dfrac{\left( \frac{1+\kappa}{1-\kappa} \right)^2 - R^2}{1 - R^2}} - \sqrt{\dfrac{\left( \frac{1+\kappa}{1-\kappa} \right)^2 - 1}{1 - R^2}} \\
                    & = \sqrt{\dfrac{\left( \frac{1+\kappa}{1-\kappa} \right)^2 - 1}{1 - R^2} + 1} - \sqrt{\dfrac{\left( \frac{1+\kappa}{1-\kappa} \right)^2 - 1}{1 - R^2}}.
                \end{align*}
                We first focus on
                \begin{align*}
                    \dfrac{\left( \frac{1+\kappa}{1-\kappa} \right)^2 - 1}{1 - R^2} & \underset{\kappa\rightarrow 0}{=} \frac{4\kappa + O(\kappa^2)}{4\gamma\kappa + o(\kappa)} \underset{\kappa\rightarrow 0}{=} \dfrac{1}{\gamma} + o(\kappa).
                \end{align*}
                Then,
                \begin{align*}
                    \sqrt{m} & = \sqrt{\dfrac{\left( \frac{1+\kappa}{1-\kappa} \right)^2 - 1}{1 - R^2} + 1} - \sqrt{\dfrac{\left( \frac{1+\kappa}{1-\kappa} \right)^2 - 1}{1 - R^2}}\\
                    & \underset{\kappa\rightarrow 0}{=} \sqrt{1 + \dfrac{1}{\gamma} + o(\kappa)} - \sqrt{\dfrac{1}{\gamma} + o(\kappa)} \\
                    & \underset{\kappa\rightarrow 0}{=} \sqrt{1 + \dfrac{1}{\gamma}} - \sqrt{\dfrac{1}{\gamma}} + o(\kappa).
                \end{align*}
            \end{enumerate}
        \end{proof}
        
    \subsection{Example: 3 cycling step-sizes}\label{apx:case-of-a-3-step-sizes-cycle}
    
        \begin{Prop}
            The strategy with 3 step-sizes is optimal on the union of two intervals if and only if they are of the form 
            \[
                \left[\mu, \mu + (L-\mu)\left(\frac{1}{2} - \frac{R}{2} + \frac{1-R^2}{4}\right)\right] \cup \left[L - (L-\mu)\left(\frac{1}{2} - \frac{R}{2} - \frac{1-R^2}{4}\right), L\right],
            \]
            for some $R\in[0, 1]$.
        \end{Prop}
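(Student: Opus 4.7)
The plan is to apply Theorem~\ref{thm:optimality_from_equioscillation}: the 3-step-size method is optimal on $\overline{\Lambda}$ iff some cubic $\sigma_3^\Lambda$ equioscillates on $\overline{\Lambda}$ and satisfies $\sigma_3^{-1}([-1,1]) = \overline{\Lambda}$. A cubic has at most two critical points, so $\sigma_3^{-1}([-1,1])$ is generically a union of three intervals; for it to reduce to a union of two intervals, one of the two local extrema of $\sigma_3$ must attain value exactly $\pm 1$, collapsing two adjacent preimage intervals to share a point. Up to the reflection $\lambda\mapsto L+\mu-\lambda$ (which swaps the roles of the two intervals), I treat the case where $\sigma_3$ has positive leading coefficient and its local maximum, reached at some $x_1\in(\mu_1,L_1)$, equals $+1$.

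Under this assumption, equioscillation forces $\sigma_3(\mu_1)=\sigma_3(L_1)=\sigma_3(\mu_2)=-1$, $\sigma_3(L_2)=\sigma_3(x_1)=+1$, and $\sigma_3'(x_1)=0$. Since $\sigma_3+1$ and $\sigma_3-1$ are cubics with prescribed roots and share the same leading coefficient $A$, I obtain the pair of factorizations
\begin{equation*}
    \sigma_3(u)+1 = A(u-\mu_1)(u-L_1)(u-\mu_2), \qquad \sigma_3(u)-1 = A(u-x_1)^2(u-L_2),
\end{equation*}
whose difference is identically $2$. Matching the $u^3, u^2, u^1$ coefficients of the bracketed expression $(u-\mu_1)(u-L_1)(u-\mu_2) - (u-x_1)^2(u-L_2)$ to zero yields three scalar equations: the $u^3$ cancellation is automatic; the $u^2$ cancellation pins down $x_1 = (\mu_1+L_1+\mu_2-L_2)/2$; and the $u^1$ cancellation provides the key constraint linking the four endpoints.

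Plugging the normalization $\mu_1=0$, $L_2=1$, $L_1=a$, $\mu_2=a+R$ into this last equation, a short computation reduces it to $4a = 3 - 2R - R^2$, i.e., $a=(1-R)(3+R)/4=\tfrac12 - \tfrac R2 + \tfrac{1-R^2}{4}$. The remaining length is then $b = 1 - a - R = (1-R)^2/4 = \tfrac12 - \tfrac R2 - \tfrac{1-R^2}{4}$, matching the statement after rescaling by $L-\mu$. Conversely, for any $R\in[0,1]$ and intervals of these sizes, defining $\sigma_3$ by the first factorization above with $A$ fixed by $\sigma_3(L_2)=1$ automatically gives $\sigma_3-1$ the double-root structure (the same coefficient identity holds), hence equioscillation on $\overline{\Lambda}$ together with $\sigma_3^{-1}([-1,1]) = \overline{\Lambda}$, and Theorem~\ref{thm:optimality_from_equioscillation} delivers optimality. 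The main obstacle is simply the coefficient-matching bookkeeping; the conceptual key is recognizing that the preimage-degeneracy condition is encoded algebraically by the double root of $\sigma_3-1$.
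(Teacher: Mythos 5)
Your proof is correct and follows essentially the same route as the paper's: invoke the equioscillation characterization of Theorem~\ref{thm:optimality_from_equioscillation}, note that $\sigma_3^{-1}([-1,1])$ generically has three components so one local extremum must hit $\pm 1$, write the two factorizations of $\sigma_3\pm 1$, match the $u^2$ and $u^1$ coefficients (the $u^3$ cancellation being automatic) to get $x_1=(\mu_1+L_1+\mu_2-L_2)/2$ and a single linking constraint, and solve. Two small remarks on presentation rather than substance: (1) your sign convention (positive leading coefficient, $\sigma_3(\mu_1)=-1$) gives $\sigma_3(0)<-1$, so your polynomial is actually $-\sigma_3^\Lambda$ rather than $\sigma_3^\Lambda$ from~\eqref{eq:optimal_sigma}; the paper uses the opposite sign ($\sigma_3(\mu_1)=+1$, negative leading coefficient, $\sigma_3(0)>1$). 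Since $|T_n(-\sigma)|=|T_n(\sigma)|$ and the coefficient-matching equations are unchanged under $\sigma\mapsto-\sigma$, this is immaterial for the derived constraint, but worth being aware of. (2) You normalize $\mu_1=0$, $L_2=1$ and solve directly for $a$; the paper instead introduces $r_i=(L_i-\mu_i)/(L_2-\mu_1)$, derives the intermediate identity $r_1=2\sqrt{r_2}-r_2$, and only then switches to the $R$-parametrization. Both arrive at the same closed form.
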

        \begin{proof}
            From Theorem \ref{thm:optimality_from_equioscillation}, we know that $T_n(\sigma_3)$ is optimal for all $n$ if and only if, $\Lambda$ is the union of 3 different intervals that are mapped on $[-1, 1]$. Since, we are looking for $\Lambda$ being the union of 2 intervals, we know 2 of the 3 intervals $\Lambda$ is composed of share an extremity.
            Recall $\Lambda = [\mu_1, L_1]\cup[\mu_2, L_2]$. By symmetry, we can assume without loss of generality that $[\mu_1, L_1]$ is mapped to $[-1, 1]$ twice, and $[\mu_2, L_2]$ once.
            Let's then introduce $x\in(\mu_1, L_1)$ and say:
            \begin{align}
                \sigma_3(\mu_1) & ~ = 1, \\
                \sigma_3(x) & ~ = -1, \\
                \sigma_3(L_1) & ~ = 1, \\
                \sigma_3(\mu_2) & ~ = 1, \\
                \sigma_3(L_2) & ~ = -1.
            \end{align}
            Note we also know that $x$ is a local minima of $\sigma_3$, leading to $\sigma_3'(x) = 0$. We now know 3 roots of $\sigma_3+1$ and 3 roots of $\sigma_3-1$, leading to:
            \begin{align}
                \sigma_3(\lambda) - 1 & ~ = c(\lambda - \mu_1)(\lambda - L_1)(\lambda - \mu_2), \\
                \sigma_3(\lambda) + 1 & ~ = c(\lambda - x)^2(\lambda - L_2),
            \end{align}
            for some non-zero constant $c$.
            Here, we want to remove the dependency in $x$ or $c$. Using the two equalities above,
            \begin{equation}
                (\lambda - x)^2(\lambda - L_2) - (\lambda - \mu_1)(\lambda - L_1)(\lambda - \mu_2) = \frac{2}{c}.
            \end{equation}
            Matching the coefficients of the above polynomial leads to
            \begin{align}
                2x + L_2 & ~ = \mu_1 + L_1 + \mu_2 \label{eq:degree_3_x_relationship_1} \\
                & \text{and} \\
                2xL_2 + x^2 & ~ = \mu_1 L_1 + \mu_1 \mu_2 + L_1 \mu_2. \label{eq:degree_3_x_relationship_2}
            \end{align}
            We plug the expression of $x$ we get from the first equality into the second one,
            \begin{equation}\label{eq:equality_btw_eigs_for_degree_3}
                L_2(\mu_1 + L_1 + \mu_2 - L_2) + \left(\frac{\mu_1 + L_1 + \mu_2 - L_2}{2}\right)^2 = \mu_1 L_1 + \mu_1 \mu_2 + L_1 \mu_2.
            \end{equation}
            
            From here, for simplicity, we define
            \begin{equation}
                r_i \triangleq \frac{L_i - \mu_i}{L_2 - \mu_1}, \qquad \mathrm{for~} i\in\left\{1, 2\right\}.
            \end{equation}
            
            Replacing $L_1$ and $\mu_2$ by their expression using $\mu_1$, $L_2$, $r_1$ and $r_2$ leads to
            \begin{equation}
                r_1 = 2\sqrt{r_2} - r_2. \label{eq:degree_3_r_1_r_2_relationship}
            \end{equation}
            
            The reciprocal holds and we can find $x$ using 
            Equation \eqref{eq:degree_3_x_relationship_1} or \eqref{eq:degree_3_x_relationship_2}.
            Note if Equation \eqref{eq:degree_3_r_1_r_2_relationship} holds, we can directly express $\sigma_3$ as the unique polynomial verifying
            \begin{align}
                \sigma_3(\mu_1) & ~ = 1, \\
                \sigma_3(L_1) & ~ = 1, \\
                \sigma_3(\mu_2) & ~ = 1, \\
                \sigma_3(L_2) & ~ = -1.
            \end{align}
            We can therefore conclude
            \begin{equation}
                \sigma_3(\lambda) = 1 - 2\frac{(\lambda-\mu_1)(\lambda-L_1)(\lambda - \mu_2)}{(L_2-\mu_1)(L_2-L_1)(L_2 - \mu_2)}.
            \end{equation}
            
            From the new notations $r_1, r_2, \mu=\mu_1, L=L_2$, we know $T_n(\sigma_3^{\Lambda})$ is optimal for all $n$ if and only if
            \begin{equation}
                \Lambda = [\mu, \mu+r_1(L-\mu)]\cup[L-r_2(L-\mu), L].
            \end{equation}
            
            Let $R$ be
            \begin{equation}
                R = \frac{\mu_2 - L_1}{L_2 - \mu_1}
            \end{equation}
            as in the 2 step-sizes setting. Here, we have $R = 1 - r_1 - r_2$ and we assume $r_1 = 2\sqrt{r_2} - r_2$. Combining those 2 equalities gives:
            \begin{align}
                r_1 & = \frac{1}{2} - \frac{R}{2} + \frac{1-R^2}{4}, \\
                r_2 & = \frac{1}{2} - \frac{R}{2} - \frac{1-R^2}{4},
            \end{align}
            leading to the desired result, i.e., 
            \[ 
            \Lambda = [\mu, \mu + (L-\mu)(\frac{1}{2} - \frac{R}{2} + \frac{1-R^2}{4})]\cup[L - (L-\mu)(\frac{1}{2} - \frac{R}{2} - \frac{1-R^2}{4}), L].
            \]
    
        \end{proof}
        
        \begin{Th}[Asymptotic speedup of heavy ball when cycling over 3 step-sizes] \label{thm:asymptotic_speed_up_of_HB_with_3_cycling_step_sizes}
            Let $R\in [0, 1)$ be a fixed number, then
            \begin{equation}
                \sqrt{m} \underset{\kappa\rightarrow0}{=} 1 - 2\sqrt{\kappa} \sqrt{\frac{1-R^2/9}{1-R^2}} + o(\sqrt{\kappa}).
            \end{equation}
        \end{Th}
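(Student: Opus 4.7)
The strategy is to read off $\sigma_3^\Lambda$ and $\sigma_0 \triangleq \sigma_3^\Lambda(0)$ from the explicit form given in the preceding proposition, then combine with the identity $\sqrt{m} = \big(\sigma_0 - \sqrt{\sigma_0^2 - 1}\big)^{1/K}$ from \Cref{prop:sequence_stepsizes} (with $K=3$) and expand asymptotically in $\kappa \to 0$.

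Concretely, the preceding proposition gives
\begin{equation*}
    \sigma_3^\Lambda(\lambda) = 1 - 2\,\frac{(\lambda - \mu_1)(\lambda - L_1)(\lambda - \mu_2)}{(L_2 - \mu_1)(L_2 - L_1)(L_2 - \mu_2)},
\end{equation*}
with, after setting $L=1$ by rescaling and $\mu = \kappa$, the interval endpoints $L_1 = \kappa + r_1(1-\kappa)$ and $\mu_2 = 1 - r_2(1-\kappa)$. A direct expansion of the ratios $L_2 - L_1 = (1-r_1)(1-\kappa)$, $L_2 - \mu_2 = r_2(1-\kappa)$, $L_2 - \mu_1 = 1 - \kappa$ yields
\begin{equation*}
    \sigma_0 - 1 = \frac{2\kappa\, r_1(1 - r_2)}{r_2(1 - r_1)} + O(\kappa^2).
\end{equation*}
The first key calculation is to simplify this coefficient using the explicit values
\begin{equation*}
    r_1 = \tfrac{(3+R)(1-R)}{4}, \qquad r_2 = \tfrac{(1-R)^2}{4},
\end{equation*}
together with $1 - r_1 = \tfrac{(1+R)^2}{4}$ and $1 - r_2 = \tfrac{(1+R)(3-R)}{4}$. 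The cancellations conveniently give
\begin{equation*}
    \frac{r_1(1-r_2)}{r_2(1-r_1)} = \frac{9 - R^2}{1 - R^2},
\end{equation*}
so that $\sigma_0 = 1 + \tfrac{2(9-R^2)}{1-R^2}\,\kappa + O(\kappa^2)$.

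The second step is the asymptotic expansion of $\sqrt{m} = (\sigma_0 - \sqrt{\sigma_0^2-1})^{1/3}$. Writing $\sigma_0 = 1 + \varepsilon$ with $\varepsilon \to 0$, one has $\sqrt{\sigma_0^2-1} = \sqrt{2\varepsilon} + O(\varepsilon^{3/2})$, hence $\sigma_0 - \sqrt{\sigma_0^2-1} = 1 - \sqrt{2\varepsilon} + o(\sqrt{\varepsilon})$, and taking cube roots gives $\sqrt{m} = 1 - \tfrac{1}{3}\sqrt{2\varepsilon} + o(\sqrt{\varepsilon})$. Substituting $\varepsilon = \tfrac{2(9-R^2)}{1-R^2}\kappa$ yields
\begin{equation*}
    \sqrt{m} \underset{\kappa \to 0}{=} 1 - \tfrac{2}{3}\sqrt{\kappa}\,\sqrt{\tfrac{9-R^2}{1-R^2}} + o(\sqrt{\kappa}) = 1 - 2\sqrt{\kappa}\,\sqrt{\tfrac{1-R^2/9}{1-R^2}} + o(\sqrt{\kappa}),
\end{equation*}
which is exactly the claim.

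No step is a genuine obstacle — the analysis is mechanical — but the only place care is needed is the algebraic simplification $r_1(1-r_2)/[r_2(1-r_1)] = (9-R^2)/(1-R^2)$: without this clean factorization the stated constant $\sqrt{(1-R^2/9)/(1-R^2)}$ would not emerge, and it is what exposes the $K=3$ analogue of the $K=2$ gain $\sqrt{1/(1-R^2)}$ from \Cref{prop:asymptotic_rate_small_kappa}.
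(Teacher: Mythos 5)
Your proof is correct and mirrors the paper's own argument: both compute $\sigma_0 = \sigma_3^{\Lambda}(0)$ via the formula $\sigma_0 - 1 = 2\kappa\,r_1(1-r_2)/[r_2(1-r_1)] + O(\kappa^2)$, simplify the ratio to $(9-R^2)/(1-R^2)$, and expand $\sqrt{m} = (\sigma_0 - \sqrt{\sigma_0^2-1})^{1/3}$ asymptotically. Your factored forms $r_1 = \tfrac{(3+R)(1-R)}{4}$, $r_2 = \tfrac{(1-R)^2}{4}$, $1-r_1 = \tfrac{(1+R)^2}{4}$, $1-r_2 = \tfrac{(1+R)(3-R)}{4}$ make the cancellation to $(9-R^2)/(1-R^2)$ more transparent than the paper's intermediate quartic expansion (which in fact contains a transcription slip in the intermediate line, though the final answer is correct).
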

        
        \begin{proof}
            From Equation \eqref{eq:sqrt_m_from_sigma_0},
            \[
            \sqrt{m} = \left(\sigma_3^{(\Lambda)}(0) - \sqrt{\sigma_3^{(\Lambda)}(0)^2-1}\right)^{\frac{1}{3}} \;\; \text{with} \; \;\sigma_3^{(\Lambda)}(0) = {1 + 2\frac{\mu_1 L_1 \mu_2}{(L_2 - \mu_1)(L_2 - L_1)(L_2 - \mu_2)}}.
            \]
            
            Using the previous notations,
            \begin{align}
                \mu & ~ = \mu_1, \\
                L & ~ = L_2, \\
                \kappa & ~ = \frac{\mu}{L}, \\
                r_i & ~ \triangleq \frac{L_i - \mu_i}{L_2 - \mu_1}, \qquad \mathrm{for~} i\in\left\{1, 2\right\},
            \end{align}
            we can write $\sigma_3^{(\Lambda)}$ as
            \begin{align}
                \sigma_3^{(\Lambda)}(0) & ~ = 1 + 2\frac{\mu_1 L_1 \mu_2}{(L_2 - \mu_1)(L_2 - L_1)(L_2 - \mu_2)}, \\
                & ~ = 1 + 2\frac{\kappa (\kappa + r_1(1-\kappa))(1 - r_2(1-\kappa))}{(1-\kappa)^3(1-r_1)r_2}, \\
                & ~ \underset{\kappa \rightarrow 0}{=} 1 + 2\kappa \frac{r_1(1-r_2)}{(1-r_1)r_2,} \\
                & ~ = 1 + 2\kappa \frac{\left(\frac{1}{2} - \frac{R}{2} + \frac{1-R^2}{4}\right)\left(\frac{1}{2} + \frac{R}{2} - \frac{1-R^2}{4}\right)}{\left(\frac{1}{2} + \frac{R}{2} + \frac{1-R^2}{4}\right)\left(\frac{1}{2} - \frac{R}{2} - \frac{1-R^2}{4}\right)}, \\
                & ~ = 1 + 2\kappa \frac{9 - 10R^2 + R^4}{1 - 2R^2 + R^4}, \\
                & ~ = 1 + 2\kappa \frac{\left( 1-R^2 \right)\left( 9-R^2 \right)}{\left( 1-R^2 \right)^2}, \\
                & ~ = 1 + 2\kappa \frac{9-R^2}{1-R^2}.
            \end{align}
            
            Then introducing briefly $\varepsilon \triangleq \kappa \frac{9-R^2}{1-R^2} \underset{\kappa\rightarrow 0}{\rightarrow} 0$,
            \begin{align}
                \sqrt{m} & ~ = \left(\sigma_3^{(\Lambda)}(0) - \sqrt{\sigma_3^{(\Lambda)}(0)^2-1}\right)^{\frac{1}{3}}, \\
                & ~ = \left(1 + 2\varepsilon - \sqrt{1+4\varepsilon+4\varepsilon^2 - 1}\right)^{\frac{1}{3}}, \\
                & ~ \underset{\kappa\rightarrow0}{=} 1 - \frac{2}{3}\sqrt{\varepsilon} + o(\sqrt{\varepsilon}).
            \end{align}
            Plugging $\varepsilon$ expression into the latest gives
            \begin{equation}
                \sqrt{m} \underset{\kappa\rightarrow0}{=} 1 - 2\sqrt{\kappa} \sqrt{\frac{1-R^2/9}{1-R^2}} + o(\sqrt{\kappa}). 
            \end{equation}
        \end{proof}
    
\section{Beyond quadratic objective: local convergence of cycling methods}\label{apx:beyond-quadratic-objective:-local-convergence-of-cycling-methods}

    In this section, we prove a result of local convergence of the cyclical heavy ball method out of quadratic setting.
    We first recall the \Cref{thm:local_convergence_non_quadratic} stated in \Cref{sec:local-convergence}:
    \localconvergencenonquadratic*
    
    \begin{proof}
        For any $k$ multiple of $K$, consider $S_k$ the operator applying $k$ steps of cycling Heavy Ball on the iterates $x_t$ and $x_{t-1}$ (note since $k$ is a multiple of $K$, Algorithm~\ref{algo:cyclical_heavy_ball} consists in repeating the operator $S_k$).
        Namely $S_k$ is an operator on $\mathbb{R}^{2d}$ verifying $S_k((x_t, x_{t-1})) = (x_{t+k}, x_{t+k-1})$.
        This operator is a composition of gradients of $f$ and affine functions, and so it is continuously differentiable.
        
        Applying the mean value theorem along each coordinate of $S_k$, we have that there exists a matrix-valued function $M(v_1, v_2)$ for all $v_1, v_2$ in the domain of $S_k$ such that
        \begin{equation}
            S_k(v_1) - S_k(v_2) = M(v_1, v_2)(v_1 - v_2)\,,
        \end{equation}
        where the $i^{th}$ rows of $M(v_1, v_2)$ is the gradient of the $i^{th}$ output of $S_k$ evaluated at  a vector on the segment between $v_1$ and $v_2$.
        
        \begin{equation}
            M(v_1, v_2) = \begin{pmatrix}
                \nabla (S_k)_1(w_1)^T \\
                \vdots \\
                \nabla (S_k)_i(w_i)^T \\
                \vdots \\
                \nabla (S_k)_{2d}(w_{2d})^T \\
            \end{pmatrix} \text{ where }
            \forall i\in\llbracket 1, 2d \rrbracket,
            \left\{
            \begin{array}{l}
                (S_k)_i \text{ denotes the $i$th coordinate of } S_k. \\
                \\
                w_i \text{ is a point on the segment $[v_1, v_2]$.}
            \end{array}
            \right.
        \end{equation}
        
        By continuity of those gradients, taking $v_1$ and $v_2$ sufficiently close to $(x_*, x_*)$, $M(v_1, v_2)$ can be chosen arbitrarily close to the Jacobian of $S_k$ in $(x_*, x_*)$ denoted by $JS_k^*$.
        
        Since by assumption the algorithm converges on the quadratic form induced by $H$ at the rate $1 - \tau$, we conclude that the spectral radius of $JS_k^*$ is upper bounded by $1-\tau$.
        
        From the previous point, we can find a small enough neighborhood of $(x_*, x_*)$ such that $M(v_1, v_2)$ has a spectral radius arbitrarily close to $1-\tau$, in particular smaller than 1.
        
        Furthermore, it's known for any $\varepsilon>0$, there exists an operator norm $\|.\|$ such that $\|M(v_1, v_2)\| < 1 - \tau + \varepsilon$. (see e.g. \citep[Proposition A.15]{bertsekas1997nonlinear}).
        
        Hence, for any $\varepsilon>0$, there exists a neighborhood $V$ of $(x_*, x_*)$ and an operator norm $\|.\|$ as described above such that $S_k$ is a $(1 - \tau + \varepsilon$)-contraction on $V$ for the norm $\|.\|$.
        
        This leads to convergence to the only fixed point $(x_*, x_*)$ with a convergence rate smaller than any $1 - \tau + \varepsilon$.
        
        Moreover, the first step of the Algorithm~\ref{algo:cyclical_heavy_ball} is continuous with respect to $x_0$. Hence, for any $V\in\mathbb{R}^{2d}$ neighborhood of $(x_*, x_*)$, there exists $W\in\mathbb{R}^d$ a neighborhood of $x_*$, such that
        \begin{equation}
            x_0 \in W \Longrightarrow (x_1, x_0) \in V.
        \end{equation}
        
        Finally, for any $\varepsilon>0$, there exists $W$ a neighborhood of $x_*$ such that the Algorithm~\ref{algo:cyclical_heavy_ball} converges to $x_*$ with a rate smaller than $1 - \tau + \varepsilon$.
        
    \end{proof}

\section{Experimental setup}\label{apx:experiments}

Benchmarks we run using a Google colab public instance with a single CPU. Producing the results of Figure \ref{fig:benchmarks} took 50 minutes with this setup. The code to reproduce this figure is attached with the supplementary material in the jupyter notebook benchmarks.ipynb .

\section{Comparison with \texorpdfstring{\citet{oymak2021super}}{Oymak (2021)}}
\label{apx:comparison_with_Oymak}
The work of~\citet{oymak2021super} also exploits cyclical step-sizes for when the spectral structure of the Hessian contains a gap. This work appeared concurrently to the first version of this manuscript and takes a somewhat different stand for exploiting this particular spectral structure. We summarize the main differences in the table below.


\begin{table}[H]
\begin{center}
{\renewcommand{\arraystretch}{1.8}
\begin{tabular}{@{}p{0.2\textwidth}p{0.39\textwidth}p{0.32\textwidth}@{}}\specialrule{2pt}{1pt}{1pt}
& \citet{oymak2021super} & This work
\\ \specialrule{2pt}{1pt}{1pt}
Algorithm & Gradient descent & Heavy ball-type (optimal algorithm)
\\ \hline
Cycle length $K$ & $K$ is a function of the spectral assumptions & $K$ is a choice
\\ \hline
Structure of the cycle & $\underbrace{\eta_+,\,  \dots,\, \eta_+}_{K-1\text{ times}},\, \eta_-$  \citep[Definition 1]{oymak2021super}& $(h_0, \dots, h_{K-1})$ (See Algorithm~\ref{algo:cyclical_heavy_ball})
\\ \hline
Optimal among  chosen scheme & Not proven / discussed & Yes
\\ \hline
Spectral assumption & Bimodal (2 intervals) & Any number of intervals \\ \hline
Spectral assumption in bimodal case & Rate depends on $\frac{L_2}{\mu_2}$ and $\frac{L_1}{\mu_1}$ & Rate depends on $L_2-\mu_2$ and $L_1-\mu_1$
\\ \hline
Typical application case in bimodal & Very strong assumption on $L_1-\mu_1$ and very weak assumption on $L_2-\mu_2$. & Weak assumption on both $L_2-\mu_2$ and $L_1-\mu_1$, more in line with empirical observations \citep{papyan2018full, ghorbani2019investigation}.
\\ \hline
Spectrum is a single interval & Does not recover original rate & Recovers rate and optimal method
\\ \hline
Convergence beyond quadratic objectives & Yes (with extra assumptions) & Local convergence
\\
\specialrule{2pt}{1pt}{1pt}
\end{tabular}}
\end{center}
\end{table}
\vspace{-1em}
We emphasize the following points.
\begin{itemize}
    \item 
While we provide Theorem \ref{thm:general_rate_convergence} which described the convergence rate of any cycling heavy ball (for any cycle), \citet{oymak2021super} only studied gradient descent method (without momentum) for a particular cycle (for which cycle length is not a parameter, but fixed by the eigenvalues).
\item Moreover, our Theorem \ref{prop:sequence_stepsizes} provides the optimal cycle to use for any choice of a cycle length, while optimality is not discussed  in \citet{oymak2021super} and the cycle uses only two different step-sizes, which is somewhat arbitrary.
\item Furthermore, our work highlights acceleration under assumptions that seem more aligned with empirical observation:  \citet{oymak2021super} shows that $L_1 - \mu_1$ needs to be very small for his strategy to be worthwhile, while this is not really the case in our experiments (see~\Cref{fig:mnist_density})
\item Finally, in the general case in which we cannot assume any gap in the spectrum, we naturally recover the classical optimal method and rate. This is not the case in \citet{oymak2021super} which is suboptimal in this setup.
\end{itemize}

In this work, we focus on quadratic minimization and give some local convergence guarantee beyond quadratics. On the other hand, \citet{oymak2021super} provides guarantee beyond this setup, at the cost of very restrictive assumptions.





\end{document}